\DeclareRobustCommand\full  {\tikz[baseline=-0.6ex]\draw[thick] (0,0)--(0.50,0);}
\DeclareRobustCommand\dotted{\tikz[baseline=-0.6ex]\draw[thick,dotted] (0,0)--(0.50,0);}
\DeclareRobustCommand\dashed{\tikz[baseline=-0.6ex]\draw[thick,dashed] (0,0)--(0.50,0);}
\DeclareRobustCommand\chain {\tikz[baseline=-0.6ex]\draw[thick,dash dot] (0,0)--(0.50,0);}
\theoremstyle{thmstyleone}%
\newtheorem{theorem}{Theorem}
\newtheorem{corollary}[theorem]{Corollary}
\newtheorem{lemma}[theorem]{Lemma}
\newtheorem{proposition}[theorem]{Proposition}
\theoremstyle{thmstyletwo}%
\newtheorem{example}{Example}%
\newtheorem{remark}{Remark}%
\theoremstyle{thmstylethree}%
\newcommand{\e}{{\rm e}}
\newcommand{\iid}{\stackrel{\mbox{\scriptsize iid}}{\sim}}
\newcommand{\ind}{\stackrel{\mbox{\scriptsize ind}}{\sim}}
\newcommand{\deq}{\stackrel{\mbox{\scriptsize d}}{=}}
\newcommand{\indicator}{\ensuremath{\mathbbm{1}}}
\newcommand{\calC}{\mathcal{C}}
\newcommand{\calM}{\mathcal{M}}
\newcommand{\calN}{\mathcal{N}}
\newcommand{\calX}{\mathcal{X}}
\newcommand{\calS}{\mathcal{S}}
\newcommand{\ptilde}{\tilde{p}}
\newcommand{\mutilde}{\tilde{\mu}}
\newcommand{\dd}{\mathrm d}
\newcommand{\Ga}{\mbox{Gamma}}
\newcommand{\Cov}{\mbox{Cov}}
\newcommand{\plaw}{\mathbf{P}}
\newcommand{\partder}[1]{\frac{\partial}{ \partial #1}}
\newcommand{\E}{\mathsf{E}}
\newcommand{\X}{\mathbb{X}}
\newcommand{\Z}{\mathbb{Z}}
\newcommand{\W}{\mathbb{W}}
\newcommand{\M}{\mathbb{M}}
\newcommand{\R}{\mathbb{R}}
\newcommand{\Ss}{\mathbb{S}}
\newcommand{\prob}{\mathsf{P}}
\renewcommand{\mid}{\ensuremath{\,|\,}}
\newcommand{\midd}{\ensuremath{\,\middle|\,}}
\newcommand{\MBtext}[1]{#1} 
\title{Bayesian Mixture Models with Repulsive and Attractive Atoms}
\author[1]{Mario Beraha}
\author[2]{Raffaele Argiento}
\author[1]{Federico Camerlenghi}
\author[3]{Alessandra Guglielmi}
\affil[1]{\normalsize{Department of Economics, Management and Statistics, University of Milano-Bicocca}}
\affil[2]{\normalsize{Department of Economics, University of Bergamo}}
\affil[3]{\normalsize{Department of Mathematics, Politecnico di Milano}}
\begin{document}

\maketitle

\begin{abstract}
The study of almost surely discrete random probability measures is an active line of research in Bayesian nonparametrics. 
The idea of assuming interaction across the atoms of the random probability measure has recently spurred significant interest in the context of Bayesian mixture models. 
This allows the definition of priors that encourage well-separated and interpretable clusters.
In this work, we provide a unified framework for the construction and the Bayesian analysis of random probability measures with interacting atoms, encompassing both repulsive and attractive behaviours.
Specifically, we derive closed-form expressions for the posterior distribution, the marginal and predictive distributions, which were not previously available except for the case of measures with i.i.d. atoms.
We show how these quantities are fundamental both for prior elicitation and to develop new posterior simulation algorithms for hierarchical mixture models.
Our results are obtained without any assumption on the finite point process that governs the atoms of the random measure. Their proofs rely on analytical tools borrowed from the Palm calculus theory, which might be of independent interest. 
We specialise our treatment to the classes of Poisson, Gibbs, and determinantal point processes, as well as in the case of shot-noise Cox processes. Finally, we illustrate the performance of different modelling strategies on simulated and real datasets.
\end{abstract}

\textbf{Keywords}: Distribution theory, repulsive point processes, Palm calculus, shot-noise Cox process, mixture of mixtures.

\section{Introduction}

Clustering is a fundamental problem in statistics and machine learning, being one of the workhorses of unsupervised learning, aiming at dividing datapoints into similar groups.
The Bayesian approach to clustering offers substantial advantages over traditional algorithms, allowing for straightforward uncertainty quantification around point estimates. See \cite{wade23} and \cite{grazian23} for two recent and comprehensive reviews. 
The most common formulations of Bayesian clustering revolve around mixture models \citep{fruhwirth2019handbook}. Mixture models assume that observations belong to one of a potentially infinite number of groups or components, and each group is suitably modelled by a parametric density $f(\cdot \mid Y)$ for some parameter $Y$; their relative prevalence is specified by random weights $(p_j)_{j \geq 1}$ (such that $p_j > 0$ and $\sum_{j \geq 1} p_j = 1$ almost surely).

Recent contributions focused on the robustness of the cluster estimate to model misspecification, establishing a lack thereof in commonly adopted models, both empirically \citep{miller2018robust} and theoretically \citep{cai2021finite, guha2021posterior}. In particular, \cite{cai2021finite} showed how, if the mixture kernel $f(\cdot \mid \cdot)$ does not agree with the true data generating process, the estimated number of clusters by Bayesian nonparametric mixtures diverges as the sample size increases.
The root of this issue can be traced back to a fundamental trade-off between density estimation and clustering in the misspecified regime. To exemplify this, consider the case when data are generated by a mixture of Student $t$ distributions, but the model assumes a mixture of Gaussian distributions. The consistency of Gaussian mixture models for the density estimates \citep{ghosal1999posterior} necessarily entails that the posterior distribution will identify an ever increasing number of clusters, thus producing an inconsistent cluster estimate.

Repulsive mixture models 
\citep{PeRaDu12, xu2016bayesian, xie2019bayesian, bianchini2018determinantal, quinlan2017parsimonious, beraha21, cremaschi2023repulsion} 
offer a practical solution to this problem by forcing the mixture components to be well-separated by assuming a repulsive point process prior for the mixture locations.
However, despite their recent popularity, the mathematical properties of repulsive mixtures have yet to be thoroughly investigated. As a result, prior elicitation strategies and posterior inference algorithms have been derived on a case-by-case basis, often based on heuristics in the case of prior elicitation and inefficient Markov chain Monte Carlo updates for posterior inference.
In sharp contrast, a unified treatment of the main mathematical properties of traditional (non-repulsive) mixtures can be found in the cornerstone paper by \cite{JaLiPr09}, which led to a dramatic increase in the popularity of Bayesian nonparametrics and fostered the development of novel methods, algorithms, and applications. 

The aim of this paper is two-fold. First, we provide a framework for the Bayesian analysis of mixture models encompassing repulsiveness as well as other forms of dependence in the locations, such as attractiveness.
In particular, we fill a gap in the literature by providing a unified framework allowing for the analysis of the associated mixing measure and establishing general results characterising the distribution (both a priori and a posteriori) of several functionals of interest.  Our results do not merely generalise existing theory, but they shed light on large classes of processes which are used in applications without appropriate theoretical investigation.
Secondly, we propose to sidestep the aforementioned trade-off between density and cluster estimation by relying on a novel definition of cluster that naturally arises when assuming a shot-noise Cox process prior \citep{Mo03Cox} for the mixture locations, whereby a cluster can consist of multiple mixture components with similar parameters. In particular, this is the first time such a prior is considered in mixture modelling.

\subsection{Mixture models and almost surely discrete random probability measures}

To formalise the notation, let $Z = (Z_1, \ldots, Z_n)$ be the observed sample, which is assumed to be distributed~as
\begin{equation}\label{eq:mixture1}
    Z_i \mid \ptilde \iid \int f(\cdot \mid y) \ptilde(\dd y),
\end{equation}
with the mixing distribution $\ptilde$ being an almost surely discrete random probability measure (RPM), $\ptilde =\sum_{j \geq 1}   p_j \delta_{X_j}$.
From the seminal work of  \cite{Fer73} on the Dirichlet process, various approaches for constructing RPMs have been introduced.
A fruitful strategy is based on normalising completely random measures with infinite activity, i.e., whose number of support points $( X_j)_{j \geq 1}$ is countably infinite. 
This idea, systematically introduced in \cite{ReLiPr03} for measures on $\R$ with the name of normalised random measures with independent increments (NRMIs), has been extended later to more general spaces \citep[see, e.g.,][]{JaLiPr09}.
More recently, \cite{ArDeInf19} have exploited the same ideas to construct random probability measures with a random finite number of support points, named normalised independent finite point processes.
Both approaches build a discrete RPM by normalising a marked point process where the points of the process define the jumps $(p_j)_{j \geq 1}$ of the RPM and i.i.d. marks $(X_j)_{j \geq 1}$ define the atoms. See also  \cite{lijoi22} for an allied approach.

The direct study of model \eqref{eq:mixture1} is challenging, and it is customary to augment the parameter space via the introduction of latent parameters $Y_i$, $i=1, \ldots, n$ such that
\begin{equation}\label{eq:mix_latent}
    Z_i \mid Y_i \ind f(\cdot \mid Y_i), \qquad Y_i \mid \ptilde \iid \ptilde, \qquad i=1, \ldots, n.
\end{equation}
Then, a fundamental preliminary step for Bayesian inference in the mixture model is to study the distributional properties of the latent sample $Y_1, \ldots, Y_n$.
Specifically, for prior elicitation purposes, it is interesting to investigate the moments of $\ptilde$,  the
prior distribution of the distinct values in the sample, and the marginal distribution of the $Y_i$'s.  For inferential purposes instead, both from a methodological and computational perspective, it is central to derive the conditional distribution of $\ptilde$ given $Y_1, \ldots, Y_n$, as well as the predictive distribution of $Y_{n+1}$ given the latent sample.
See, e.g., \cite{JaLiPr09} and \cite{ArDeInf19} for the expression of such quantities in the case of normalised completely random measures and normalised independent finite point processes, respectively.

In this paper, we consider model \eqref{eq:mix_latent} and propose a construction of RPMs via normalisation of marked point processes with inverted roles with respect to \cite{ReLiPr03} and \cite{ArDeInf19}. In our construction, indeed, the atoms $(X_j)_{j \geq 1}$ are the points of a general simple point process, while the weights $( p_j)_{j \geq 1}$ are obtained by normalising i.i.d. positive marks associated with the atoms.
Through the law of the point process, we are able to encourage different behaviours among the support points of the random probability measure, such as independence (when the point process is Poisson or the class of independent finite point processes), separation (i.e., the support points are well separated, when the point process is repulsive, such as Gibbs or determinantal point processes), and also random aggregation (i.e., the support points are clustered together, when the point process is of Cox type).

\subsection{Our contributions and outline of the paper}

We begin by introducing the model for the $Y_i$'s and the general construction for normalised random measures based on marked point processes in Section~\ref{sec:model}.
In Section~\ref{sec:analysis}, we study the properties of the latent sample $Y_1, \ldots, Y_n$, providing closed-form expressions for several quantities of interest, allowing for a universal theory of very different dependence behaviours such as repulsion or attractiveness.
This is possible thanks to the introduction of technical tools, mainly based on Palm calculus, which might be of independent interest. Palm calculus, beyond notable exceptions, is not typically known in the Bayesian nonparametric literature.
Though the closed-form expressions that we obtain can be generally challenging to evaluate, they drastically simplify in the case of well-known point processes. The manuscript illustrates the general theory using determinantal and shot-noise Cox point processes.
In the appendix we also discuss the case of Poisson and Gibbs point processes.
Then, we resume the study of the Bayesian mixture model  \eqref{eq:mix_latent} in Section~\ref{sec:mix} and show that our analyses can be used as the building block for two Markov chain Monte Carlo algorithms to approximate the posterior distribution.
In Section~\ref{sec:numeric}, we discuss two simulation studies and an application to real data with large sample size, highlighting the main difference between traditional, repulsive, and attractive mixtures. In particular, we show that in a setting with \emph{corrupted} observations, repulsive mixtures are useful to recover the underlying signal, while traditional and attractive mixtures tend to overfit. On the other hand, in more classical misspecification settings where data exhibit heavy tails, we find that attractive mixtures provide a better clustering and density estimate than repulsive mixtures.
We conclude with a discussion in Section~\ref{sec:discussion}.
In the appendix, we report the proofs of the main results, as well as a discussion on the properties of the prior distribution of our random measures and more details on the examples using Poisson, Gibbs, determinantal and shot-noise point processes.   Moreover,  
Appendix~\ref{app:numerical_examples} provides further numerical illustrations, allowing us to explain how we fixed the hyperparameters. We also include a quantitative discussion on the sensitivity of these hyperparameters on density and cluster estimation.

From a technical point of view, our results are based on applying Palm calculus, a fundamental tool in studying point processes.
Essentially, Palm calculus can be regarded as an extension of Fubini's theorem. It allows the exchange of the expectation and integral signs when both are with respect to a point process, with the subtle difference that (in general) the expectation is now taken with respect to the law of another point process, i.e., the \emph{reduced Palm version} of the original process. 
In the Poisson process case (and therefore in the case of completely random measures), the reduced Palm version coincides with the law of the original Poisson process, for which computations are usually easily manageable. 
This property characterises Poisson processes \citep{last2017lectures}.

\subsection{Related works}

This work extends the treatment of  \cite{ArDeInf19} and \cite{beraha21}, who consider finite mixtures with a random number of components. Contrary to our work,  \cite{ArDeInf19} focus on the case of independent and identically distributed atoms of the mixing distribution, while we allow for dependence (general interaction, e.g., repulsiveness or attraction) among the atoms.
\cite{beraha21} consider only repulsive mixtures. The approach there lacks the thorough theoretical background that is instead provided in this paper, though they present a general
framework for repulsive mixture models, and, more importantly, derive an efficient MCMC algorithm which avoids the difficulties of the reversible jump MCMC
computation. 

Another degree of novelty of the present paper concerns the use of Palm calculus for Bayesian analysis.
In the context of BNP models, Palm calculus was introduced in \cite{james2002poisson, james_levy_moving} for the analysis of random probability measures built from Poisson processes.
See also \cite{james_ntr} and \cite{JaLiPr09} for applications to the Bayesian analysis of neutral to the right processes and mixtures of normalised completely random measures, respectively. 
In particular, Section~8 in \cite{james2002poisson} deals with the broad class of weighted Poisson random measures, which are functionals of Gibbs point processes using the terminology of point processes.
Our approach is substantially different. 
First, we do not require that our processes are absolutely continuous with respect to the Poisson one, being able to deal with important processes, such as a large class of determinantal point processes 
and the shot-noise Cox processes. 
Secondly, our approach has the merit of being more direct than  \cite{james2002poisson} because we work directly on the process of interest rather than on the dominating Poisson process. This is possible since we rely on novel technical tools for the analysis of Bayesian nonparametric models borrowed from point process theory.
Consequently, in the mathematical expressions we obtain, it is possible to recognise well-known objects in the study of point processes, which might be more challenging to see in \cite{james2002poisson}. This also allows us to borrow ideas from the literature on the simulation of spatial point processes \citep{MoWaBook03} to address posterior inference.

\section{A general construction for normalized random measures}\label{sec:model}

We consider a sequence of random variables $(Y_i)_{i \geq 1}$ defined on the probability space $(\Omega, \mathcal{A}, \prob)$ and taking values in the Polish space $(\X, \mathcal{X})$, endowed with its Borel $\sigma$-algebra. Moreover, we denote by $\mathbb P(\X)$ the space of all probability measures over $(\X, \mathcal{X})$, and  $\mathcal{P}(\X)$ stands for the corresponding Borel $\sigma$-algebra.
We suppose that 
\begin{equation}\label{eq:model}
    \begin{aligned}
        Y_i \mid \tilde{p} & \iid \tilde p \quad  i \geq 1 \\
        \tilde p & \sim Q,
    \end{aligned}
\end{equation}
where $Q$ is a distribution over $\left(\mathbb P(\X), \mathcal{P}(\X) \right)$.
Note that, by de Finetti's theorem \citep{defin37}, \eqref{eq:model} is equivalent to assuming that the $Y_i$'s are exchangeable, which justifies the Bayesian approach to inference.
{A sample of size $n$ from \eqref{eq:model} consists of the  first $n$ terms of the sequence $(Y_i)_{i \geq 1}$. We will denote this sample by $\bm Y = (Y_1, \ldots, Y_n)$. In this and the following sections, $\bm Y$ represents the observed data points.

\subsection{Random measures as functionals of point processes}
\label{sec:basics_PP}

In this section, we define the class of priors $Q$ in \eqref{eq:model} that we will deal with throughout the paper.
As standard in Bayesian nonparametrics \citep[see, e.g,][]{lijoi2010models} we assume that $\tilde p$ is (almost surely) a purely atomic probability measure, $\tilde p = \sum_{j \geq 1} w_j \delta_{X_j}$ where $(w_j)_{j \geq 1}$ is a collection of positive random variables summing to one and the random atoms $X_j$'s take value in $\X$.
To define such a $\ptilde$, we follow the general approach set forth in \cite{ReLiPr03} and assume that $\ptilde$ arises as the normalisation of a finite random measure $\mutilde = \sum_{j \ge 1} S_j \delta_{X_j}$ built as follows.
Start by considering a \emph{simple point process} $\Phi$ on $\X$, which will define the atoms of $\mutilde$. That is, $\Phi$ is a random counting measure of the kind
\begin{equation}
\Phi(B)= \sum_{j\geq 1}\delta_{X_j} (B), \qquad B \in \calX,
\label{eq:simple_pp}
\end{equation}
where $(X_j)_{j\geq 1}$, the \textit{points} of the process, form a random countable subset of $\X$ such that $\prob(X_i=X_j)=0$ for all $i\neq j$. We denote by $\plaw_\Phi$ the distribution of $\Phi$.
Then, we associate with each point of $\Phi$ independent \emph{marks} $S_j \in \R_+$, such that each $S_j$ has marginal distribution $H$, to define the \emph{marked} point process $\Psi = \sum_{j \geq 1} \delta_{(X_j, S_j)}$ and set 
\begin{equation}\label{eq:mu_def}
    \tilde \mu(B) = \int_{ B\times \R_+ } s \Psi(\dd x \, \dd s ) = \sum_{j \geq 1} S_j \delta_{X_j}(B), \qquad B \in \mathcal{X}.
\end{equation}

Our construction of $\mutilde$ allows us to recover 
the random measures built from the independent finite point processes (also known as mixed binomial processes) analysed in  \cite{ArDeInf19}. In addition, Equation \eqref{eq:mu_def} defines a random measure as a functional of a point process $\Psi$, similarly as in the case of CRMs.
More importantly, our construction allows us to consider measures with \emph{interacting} atoms that result in more informative priors in Bayesian mixture models, for instance by assuming that $\Phi$ is a repulsive Gibbs point process (so that the support points $(X_j)_{j \geq 1}$ are encouraged to be well separated) or a shot-noise Cox process (which, instead, results in the $X_j$'s being randomly clustered together).

Defining $\tilde p$ via the normalisation of $\tilde \mu$ in \eqref{eq:mu_def} requires some care to ensure that $\tilde p$ is well-defined. In particular, we assume that $\Phi$ in \eqref{eq:simple_pp} 
is a finite point process, {i.e., $\Phi(\X) < +\infty$ almost surely (a.s.), which clearly implies $\tilde\mu(\X)<+\infty$ a.s.. 
Then, we set
\begin{equation}\label{eq:pdef}
    \tilde p(B) = \begin{cases}
        \displaystyle\frac{\mutilde(B)}{\mutilde(\X)} & \text{ if } \mutilde(\X) > 0 \\
        0  & \text{ if } \mutilde(\X) = 0
    \end{cases}, \quad B \in \mathcal X
\end{equation}
where, when $\tilde p \equiv 0$,  we intend that the model does not generate any observation $Y_i$. A similar agreement is adopted in
\cite{zhou_fof}.
If we do observe datapoints, then
$\prob(\tilde \mu(\X) = 0 \mid Y_1, \ldots, Y_n) = 0$, as shown in Theorem~\ref{teo:post} below. Hence, a posteriori, the usual assumption $\tilde \mu(\X) > 0$ a.s. is guaranteed. Alternatively, one can assume $\Phi$ is always non-empty, which also fits the definition in \eqref{eq:pdef}. However, most well-studied (repulsive) point processes in the literature assume that $\prob(\Phi(\X) = 0) > 0$. Moreover,
we point out that the requirement $\tilde p =0$ when $\tilde \mu (\X)=0$ makes no issue in any of the proofs; see, e.g., the proof of Proposition \ref{prop:prior_moms}. 
We write $\tilde{p} \sim  {\rm nRM} (\plaw_\Phi, H)$ to denote the distribution of the normalised random measure, while $\tilde{\mu} \sim 
{\rm RM} (\plaw_\Phi, H)$ stands for the distribution of the associated unnormalised random measure. With a little abuse of notation, we will also denote by $\plaw_\Psi$ the law of $\tilde\mu$, where $\Psi$  is $\Phi$ with marks from $H$. 
Note that, as it is clear from the definition, $\ptilde \sim \mbox{nRM}(\plaw_\Phi, H)$ does not lie in the class of \emph{species sampling models} \citep{pitman1996}, since, for instance, the support points are not i.i.d..

\subsection{Background on Point Processes}
\label{sec:back_on_PP}
We now give the necessary background material on point processes and introduce the two processes that will be discussed throughout the paper. To keep the discussion light, we will present here the main mathematical objects in an intuitive and non-formal way. A more technical treatment is deferred to the appendix.

As mentioned in the introduction, our analyses are based on Palm calculus. A central concept of our treatment is the Palm kernel, which may be regarded as an extension of regular conditional distributions to the case of point processes \citep{Kallenberg2021, BaBlaKa}.
Informally speaking, the Palm version $\Phi_x$ of $\Phi$ at $x \in \mathbb X$ is the random measure $\Phi$ conditionally to the event ``$\Phi$ has an atom in $x$''.
Its law is denoted by $\plaw_{\Phi}^{x}$. The latter is referred to as the Palm kernel of $\Phi$ at $x$.
Since $x$ is a \emph{trivial} atom of $\Phi_x$ we can safely discard it and consider the \emph{reduced} Palm version of $\Phi$, $\Phi^!_x := \Phi_x - \delta_x$ with the associated reduced Palm kernel denoted by $\plaw_{\Phi^!}^x$. The argument outlined above can be extended to the case of multiple pairwise different points $\bm x = (x_1, \ldots, x_k)$, leading to the $k$-th Palm distribution $\{\plaw_\Phi^{\bm{x}}\}_{\bm{x} \in \X^k}$.
Again, $\Phi_{\bm x}$ can be understood as the law of $\Phi$ conditional to $\Phi$ having atoms at $\{x_1, \ldots, x_k\}$ and removing the trivial atoms yields the reduced Palm distribution, that is, the law of 
\[
    \Phi^!_{\bm x} := \Phi_{\bm x} - \sum_{j=1}^k \delta_{x_j}.
\]

The other central quantities needed for our subsequent discussion are the so-called moment measures. The moment measure of order one, $M_{\Phi}$, is defined as $M_{\Phi}(B) = \E[\Phi(B)]$ for all $B \in \calX$. The \emph{factorial moment measure} $M_{\Phi^{(k)}}$ of order $k$, instead, is the only measure such that
\[
    \E\left[ \sum_{(x_1, \ldots, x_k) \in \Phi}^{\neq} g(x_1, \ldots, x_k)\right] = \int_{\X^k} g(x_1, \ldots, x_k) M_{\Phi^{(k)}}(\dd x_1 \cdots \dd x_k)
\]
for any measurable function $g: \X^k \rightarrow \R_+$, where the symbol $\neq$ over the summation sign means that $(x_1, \ldots, x_k)$ are pairwise distinct.
Observe that when $k=1$,
$M_{\Phi^{(1)}} \equiv M_{\Phi}$, i.e., the factorial moment measure coincides with the mean measure of $\Phi$.

For a marked point process $\Psi$, with independent marks, the Palm distribution $\Psi_{\bm x, \bm s}$, with  $\bm x=(x_1, \ldots, x_k)$ and $\bm s=(s_1, \ldots, s_k)$, does not depend on $\bm s$.
Moreover, $\Psi^{!}_{\bm x, \bm s}$ has the same law of the point process obtained by considering $\Phi^!_{\bm x}$ and marking it with i.i.d. marks.
See Lemma \ref{prop:marked_palm} in Appendix \ref{app:lemmata}. We write $\Psi^!_{\bm x}$ in place of $\Psi^!_{\bm x, \bm s}$. Since  $\Psi^!_{\bm x}$ is a marked point process, we can define a random probability measure on $\X$ as in the general construction \eqref{eq:mu_def}. Specifically, 
\begin{equation}\label{eq:palm_mu}
    \mutilde^!_{\bm x}(A) := \int_{A \times \R_+} s \Psi^!_{\bm x}(\dd s \, \dd x). 
\end{equation}
We write $ \mutilde^!_{\bm x} \sim \plaw^{\bm x}_{\Psi^!}$.
Note that we can interpret $\plaw^{\bm x}_{\Psi^!}$ as the law of a random measure obtained as follows:
($i$)  take the random measure $\tilde\mu$ as in \eqref{eq:mu_def}, ($ii$) condition to $x_1, \ldots, x_k$ being atoms of $\tilde \mu$  and then ($iii$)  remove $x_1, \ldots, x_k$ from the support. 

We conclude this section by defining the two classes of point processes we will deal with in the rest of the paper. In the supplementary material, we also report the case of the general Gibbs point and Poisson processes.

\begin{example}[Determinantal point processes]\label{ex:dpp_def}
Determinantal point processes \citep[DPPs][]{Macchi75,Hough09,Lav15} are a class of repulsive point processes.
We will restrict our focus to finite DPPs defined as follows. Let $\omega$ be a finite measure on $\X$. Usually, it is the case that $\X$ is a compact subset of $\R^q$ and $\omega$ is the Lebesgue measure, but one could consider more general spaces endowed, e.g., with a Gaussian measure.
Consider a complex-valued covariance function $K: \X \times \X \to \mathbb C$, such that $\int_\X K(x, x) \omega(\dd x) < +\infty$, with spectral representation
\begin{equation}\label{eq:k_mercer}
    K(x, y) = \sum_{h \geq 1} \lambda_h \varphi_h(x) \overline{\varphi_h(y)}, \qquad x, y \in X
\end{equation}
where $(\varphi_h)_{h \geq 1}$ form an orthonormal basis for the space $L^2(\X; \omega)$ of complex-valued functions, $\lambda_h \geq 0$ with $\sum_{h \geq 1} \lambda_h < +\infty$. By Mercer's theorem, the series on the right-hand side of \eqref{eq:k_mercer} converges absolutely and uniformly on $\X^2$.
If $0 \leq \lambda_h \leq 1$, $K$ defines a DPP $\Phi$ on $\X$ \citep{Macchi75, soshnikov2000determinantal}. In particular, the $k$-th factorial moment measure $M_{\Phi^{(k)}}$ admits a density with respect to the product measure $\omega^k$ satisfying
\[
    M_{\Phi^{(k)}}(\dd x_1 \cdots \dd x_k) = \det \left\{ K(x_i, x_j) \right\}_{i, j=1}^n \omega(\dd x_1) \cdots \omega(\dd x_k),
\]
where  $\left\{ K(x_i, x_j) \right\}_{i, j=1}^n$ is the $k \times k$ matrix with $(i,j)$-th entry $K(x_i, x_j)$.
\end{example}

We state below a few remarks on the definition of Determinantal point processes in Example \ref{ex:dpp_def}.

\begin{remark}[Number of points in a DPP]
    The condition $\int K(x, x) \omega(\dd x) < +\infty$ is equivalent to $M_{\Phi}(\X) = \E[\Phi(\X)] < +\infty$, which clearly implies $\Phi(\X) < +\infty$ almost surely. That is, we deal only with finite point processes. This is needed to ensure that $\tilde p$ in \eqref{eq:pdef} is well-defined.
\end{remark}

\begin{remark}[Projection DPPs]
\MBtext{Let $m \in \mathbb N$ (the set of natural numbers);} if the kernel $K$ is such that $\lambda_h = 1$ for $h \leq m$ and $\lambda_h = 0$ for $h > m$, then  the resulting DPP is a \emph{projection} DPP. In particular, it consists of exactly $m$ points, with a joint density with respect to $\omega^m$ given by
    \[
        f_{\Phi}(\nu) \propto \det \left\{K (x, y) \right\}_{x, y \in \{ x_1, \ldots , x_m \}}, \qquad \nu = (x_1, \ldots, x_m).
    \]
\end{remark}

\begin{remark}[DPP densities]
    If the kernel $K $ is such that $\lambda_h < 1$ for all $h$, then $\Phi$ is absolutely continuous with respect to the Poisson process on $\X$ with intensity measure $\omega$, with density
    \[
        f_{\Phi}(\nu)  = \e^{\omega(\X) - D} \det \left\{C(x, y) \right\}_{x, y \in \nu},
    \]
    where $D:=-\sum_{h \geq 1} \log(1 - \lambda_h)$ and $C(x, y) := \sum_{h \geq 1} \frac{\lambda_h}{1 - \lambda_h} \varphi_h(x) \overline{\varphi_h(y)}$ for $x, y \in \X$.
    Such an expression generalizes the one given by \cite{Lav15}, which deals with the case of $\X$ compact set in $\R^q$ and $\omega$ the Lebesgue measure. The proof of such an expression is derived straightforwardly by adapting results from \cite{shirai2003random} and it is provided in Appendix \ref{app:ddp}.
\end{remark}

\begin{remark}[Non finite measures on $\X$]
    If $\omega$ is not finite, then Mercer's theorem does not apply and the spectral representation \eqref{eq:k_mercer} might not hold.
    In such cases, the existence of $\Phi$ requires further conditions on $K$, which are rather technical and beyond the purpose of this paper. \MBtext{See, e.g., \cite{ferreira2009eigenvalues} for further details.}
\end{remark}

\begin{example}[Shot-Noise Cox Processes]\label{ex:sncp_def}
Introduced in \cite{Mo03Cox}, shot-noise Cox processes (SNCPs) are a prominent example of Cox processes \citep{Cox1955}.
For simplicity, we assume that $\X \subset \R^q$. A SNCP $\Phi$ is defined as
\begin{equation}
\label{eq:shot-noise}
        \Phi \mid \Lambda \sim \mathrm{PP}(\omega_\Lambda),\quad
        \omega_\Lambda(\dd x) = \gamma \left(\int_\X k_\alpha(x - v) \Lambda(\dd v)\right) \dd x , \quad
        \Lambda \sim \mathrm{PP}(\omega)
\end{equation}
where $\mathrm{PP}(\omega_\Lambda)$ and $\mathrm{PP}(\omega)$ denote Poisson processes with intensity $\omega_\Lambda$ and $\omega$, respectively.
Here, $k_\alpha$ is a probability density with respect to the $q$-dimensional Lebesgue measure and $\gamma > 0$. In particular, by assuming that $x \mapsto k_\alpha(x - v)$ is continuous for any $v$, we get that the point process $\Phi$ is simple.
We write $\Phi \sim \mathrm{SNCP}(\gamma, k_\alpha, \omega)$, and refer to $\omega$ as the base intensity of the process $\Phi$.
We also introduce the following notation that will be useful in later examples, $\eta(x_1, \ldots, x_{l}): = \int \prod_{i=1}^l k_{\alpha}(x_i - v) \omega(\dd v)$. Observe that if $\omega(\X) = 1$, $\eta$ takes the interpretation of the marginal distribution of a model of the kind $x_1, \ldots, x_k \mid v \iid k_{\alpha}(\cdot - v)$, $v \sim \omega$. \\
Thanks to the coloring theorem of Poisson processes \citep{kingman1992poisson}, \eqref{eq:shot-noise} is equivalent to the following model:
    \begin{equation}\label{eq:shot-noise2}
        \Phi \mid \Lambda = \sum_{h=1}^{n(\Lambda)} \Phi_h, \quad \Phi_h \sim \mathrm{PP}(\gamma k_\alpha(x - \zeta_h) \dd x), \quad \Lambda \sim \mathrm{PP}(\omega),
    \end{equation}
   where $\Lambda = \sum_{h=1}^{n(\Lambda)} \delta_{\zeta_h}$.
   Then, a shot-noise Cox process $\Phi = \sum_{j \geq 1} \delta_{X_j}$ admits the double-summation formulation $\Phi = \sum_{h=1}^{n(\Lambda)} \sum_{j \geq 1} \delta_{X_{h, j}}$.
   Clearly, it is possible to map the latter representation to the former by ordering the $X_{h, j}$'s arbitrarily (thanks to exchangeability), e.g., via their lexicographical order.
   From \eqref{eq:shot-noise2} it is also clear why shot-noise Cox processes are often referred to as
   \emph{cluster processes} \citep[see, e.g.,][]{DaVeJo1}.
   Indeed, introducing an indicator variables $T_j$ for each $X_j$, such that $X_j$ belongs to $\Phi_h$ if and only if  $T_j = h$ (i.e., $\Phi_h(X_j) =1$, or, equivalently, $X_j = X_{h, j^\prime}$ for some $j^\prime$), it is possible to group together the different support points of $\Phi$.
   In this context, instead of referring to the $\Phi_h$'s as \emph{clusters}, we denote them as \emph{groups} to avoid misunderstandings with the usual BNP terminology.
\end{example}


\section{Bayesian analysis of normalized random measures}\label{sec:analysis}

This section contains the most relevant results of the paper: posterior, marginal and predictive distributions for the statistical model \eqref{eq:model}, when $\tilde p \sim  {\rm nRM} (\plaw_\Phi; H)$. All the results are available in closed form, and they constitute the backbone to develop computational procedures for the mixture models in Section~\ref{sec:mix}.

All our proofs are based on the  Laplace functional of the random measure $\mutilde$, defined as
\[
    L_{\mutilde}(f) = \E\left[\exp \left( - \int_\X f(x) \mutilde(\dd x)\right)\right]
\]
for any bounded non-negative function $f: \X \rightarrow \R_+$.
We now introduce useful notation and an additional variable $U_n$, which helps us to describe Bayesian inference for model \eqref{eq:model}.  
The joint distribution of $(\bm Y, \mutilde)$ is given by
\begin{equation}\label{eq:joint}
    \prob(\bm Y \in \dd \bm y, \tilde \mu \in \dd \mu) = \frac{1}{\mu(\X)^n} \prod_{i=1}^n \mu(\dd y_j) \plaw_\Psi(\dd \mu),
\end{equation}
where $\plaw_\Psi$ is the law of $\tilde \mu$. 
\MBtext{Posterior inference in \eqref{eq:joint} is complex due to the term $\mu(\X)^{-n}$, which does not allow to separate the atoms of $\tilde \mu$ that have been observed in the sample $\bm Y$ from the atoms that have not been observed. A popular workaround to this issue, originally noted in} in \cite{JaLiPr09}, is to introduce an auxiliary variable $U_n \mid \mutilde \sim \Ga(n, \mutilde(\X))$ and, by a suitable augmentation of the underlying probability space, we consider the joint distribution of $(\bm{Y}, U_n, \tilde{\mu})$
\begin{equation}\label{eq:joint_u}
    \prob(\bm Y \in \dd \bm y, U_n \in \dd u, \tilde \mu \in \dd \mu) = \frac{u^{n-1}}{\Gamma(n)} \e^{- \mu(\X) u} \dd u \prod_{i=1}^n \mu(\dd y_j) \plaw_\Psi(\dd \mu),
\end{equation}
\MBtext{where we have gotten rid of the term $\mu(\X)^{-n}$. By considering \eqref{eq:joint_u}, we can study the conditional law of $\mutilde$ given $(\bm Y, U_n)$. Together with the posterior of $U_n \mid \bm Y$, this allows us to obtain a disintegration of the posterior of $\mutilde$ in \Cref{teo:post} below, which is amenable to analytical and numerical computations.}

Since $\mutilde$ is almost surely discrete, with positive probability, there will be ties within the sample $\bm{Y}=(Y_1, \ldots, Y_n)$. For this reason $\bm{Y}$ is equivalently characterized by 
the couple $(\bm{Y}^*, \tilde \pi)$, where 
 $\bm Y^* = (Y^*_1, \ldots, Y^*_{K_n})$ is the vector of distinct values and $\tilde \pi$ is the random partition of $[n]: =  \{1, \ldots, n\}$ of size $K_n$, identified by the equivalence relation $i \sim j$ if and only if  $Y_i = Y_j$. Given $K_n =k$, we indicate by  $\bm{n}= (n_{1}, \ldots , n_{k})$ 
 the vector of counts, i.e., 
 $n_j$ is the cardinality of the set $\{i \in [n]: Y_i = Y^*_j\}$, as $j =1, \ldots , k$. As a consequence, we may write
\[
	 \prob(\bm Y \in \dd \bm y, U_n \in \dd u, \tilde \mu \in \dd \mu)  = \frac{u^{n-1}}{\Gamma(n)} \e^{-\mu(\X) u} \prod_{j=1}^k \mu(\dd y^*_j)^{n_j} \plaw_\Psi(\dd \mu).
\]
The augmentation of \eqref{eq:joint} through $U_n$ is made possible since the marginal distribution of $U_n$ exists and has a density, with respect to the Lebesgue measure, that is 
$$f_{U_n}(u)= \frac{u^{n-1}}{\Gamma(n)} \int_{0}^{+\infty} t^n \e^{- tu} f_{\mu}(t) \dd t, $$ 
where $f_\mu$ is the density or the random variable $\tilde \mu(\X)$.

\subsection{Posterior characterization}
We first characterise the posterior distribution of $\tilde{p}$ in model \eqref{eq:model}  when a priori $\tilde p\sim {\rm nRM} (\plaw_\Phi; H)$. Since $\tilde{p}$ is obtained by the normalisation of the random measure $\tilde \mu$, it is sufficient to describe the posterior distribution of $\tilde{\mu}$, which is provided in the following theorem. 
\begin{theorem}\label{teo:post}
 Assume that $H(\dd s) = h(s) \dd s$ where $\dd s$ is the Lebesgue measure.
The distribution of $\mutilde$ conditionally on  $\bm Y = \bm y$ and $U_n =u$  is equal to the 
distribution of 
\begin{equation}\label{eq:mu_post}
\sum_{j=1}^k S_j^* \delta_{y^*_j} + \mutilde^\prime
\end{equation}
where:
\begin{itemize}
    \item[(i)]$\bm{S}^*:= (S_1^*, \ldots , S_k^*)$ is a vector of independent random variables,  with density 
\[
    f_{S_j^*}(s) \propto \e^{- u s} s^{n_j} h(s) \text{ for } j=1, \ldots , k;
\]
    \item[(ii)]  $\mutilde'$ is a random measure with Laplace functional
\begin{equation} \label{eq:Laplace_mu_post}
    \E\left[\exp \int_\X - f(z) \mutilde '(\dd z) \right] = \frac{\E \left[ \exp \left\{- \int_\X (f(z) + u) \mutilde^!_{\bm y^*}(\dd z) \right\} \right]}{\E \left[ \exp \left\{- \int_\X u \mutilde^!_{\bm y^*}(\dd z) \right\} \right]},
\end{equation}
and $\mutilde^!_{\bm y^*}$ is as in \eqref{eq:palm_mu} for $\bm x = \bm y^*$.
\end{itemize}
In \eqref{eq:mu_post} above, $\bm{S}^*$ and $\mutilde'$ are independent. 
Moreover, the conditional distribution of $U_n$, given $\bm Y=\bm y$, has a density with respect to the Lebesgue measure, which satisfies 
\begin{equation}\label{eq:post_u}
    f_{U_n \mid \bm Y}(u) \propto u^{n-1} \E\left[\e^{-  u \mutilde^!_{\bm y^*} (\X)} \right] \prod_{j=1}^k \kappa(u, n_j), \quad u>0,
\end{equation}
with
$\kappa(u, n): = \int_{\R_+} \e^{-u s} s^{n} H(\dd s)$.
 \end{theorem}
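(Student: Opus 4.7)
The plan is to identify the conditional law of $\mutilde$ given $(\bm Y = \bm y, U_n = u)$ through its Laplace functional, which uniquely characterizes the distribution of a random measure. Starting from the augmented joint law
\[
\prob(\bm Y \in \dd \bm y, U_n \in \dd u, \mutilde \in \dd \mu) = \frac{u^{n-1}}{\Gamma(n)}\,\e^{-u\mu(\X)} \prod_{j=1}^{k} \mu(\dd y^*_j)^{n_j}\,\plaw_\Psi(\dd\mu),
\]
I would express, for any bounded nonnegative $f$ with bounded support,
\[
\E\!\left[\e^{-\int f \dd\mutilde} \,\Big|\, \bm Y = \bm y, U_n = u\right] = \frac{\E\!\left[\e^{-\int(f+u)\dd\mutilde}\, \prod_{j=1}^{k} \mutilde(\dd y^*_j)^{n_j}\right]}{\E\!\left[\e^{-u\mutilde(\X)}\, \prod_{j=1}^{k} \mutilde(\dd y^*_j)^{n_j}\right]},
\]
so that the posterior is read off from a ratio of two expectations of functionals of the marked point process $\Psi$.

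Next, I would apply the $k$-th order Campbell--Little--Mecke formula to both numerator and denominator. Since the $y^*_j$'s are distinct, $\prod_j \mutilde(\dd y^*_j)^{n_j}$ acts, as $\bm y^*$ varies, as a density on $\X^k$ obtained by integrating $(\bm x,\bm s)\mapsto \prod_j s_j^{n_j}\prod_j \delta_{x_j}(\dd y^*_j)$ against the $k$-th factorial measure of $\Psi$. The CLM formula then rewrites each expectation as an integral over $\bm y^*$ against $M_\Phi^{(k)}(\dd\bm y^*)$, in which $\Psi$ is replaced by $\Psi^!_{\bm y^*}$ augmented by $k$ inserted atoms at $\bm y^*$ carrying fresh i.i.d.\ marks $S_1,\dots,S_k\sim H$, independent of $\mutilde^!_{\bm y^*}$; this is exactly Lemma~\ref{prop:marked_palm}. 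Because the exponential factorizes across the inserted atoms and the remaining mass, the numerator becomes
\[
M_\Phi^{(k)}(\dd\bm y^*)\,\E\!\left[\e^{-\int(f+u)\dd\mutilde^!_{\bm y^*}}\right]\prod_{j=1}^{k}\int_{\R_+} s^{n_j}\e^{-s(u+f(y^*_j))}\,H(\dd s),
\]
and analogously for the denominator with $f\equiv 0$ in both functional arguments (which collapses the last integral to $\kappa(u,n_j)$).

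Taking the ratio, the $M_\Phi^{(k)}(\dd\bm y^*)$ factors cancel, leaving the product
\[
\prod_{j=1}^{k}\frac{\int_{\R_+} s^{n_j}\e^{-s(u+f(y^*_j))}h(s)\dd s}{\int_{\R_+} s^{n_j}\e^{-su}h(s)\dd s}
\]
multiplied by the ratio in \eqref{eq:Laplace_mu_post}. The first factor is recognized as $\prod_j \E[\e^{-f(y^*_j) S^*_j}]$ with $S^*_j$ independent and distributed as in part (i), while the second factor is the Laplace functional of $\mutilde'$ in (ii). By independence of the two layers this identifies the posterior law of $\mutilde$ as in \eqref{eq:mu_post}. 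Formula \eqref{eq:post_u} then follows by reading off the denominator above, viewed as a function of $u$, and normalizing; positivity of the normalization also yields $\prob(\mutilde(\X)=0\mid\bm Y)=0$.

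The main obstacle is the careful application of the $k$-fold CLM formula to a marked point process with an arbitrary ground process, together with the justification that $\prod_j \mutilde(\dd y^*_j)^{n_j}$, viewed as a measure on $\X^k$ when $\bm y^*$ ranges over distinct tuples, is exactly the pushforward of the $k$-th factorial measure of $\Psi$ weighted by $\prod_j s_j^{n_j}$. Invoking Lemma~\ref{prop:marked_palm} is the key simplification that decouples the marks of the inserted atoms from $\mutilde^!_{\bm y^*}$ and makes the ratio factor into the announced product-plus-residual form.
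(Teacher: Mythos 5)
Your proposal is correct and follows essentially the same route as the paper's own proof: both start from the Laplace functional expressed as a ratio of two expectations involving $\e^{-u\mutilde(\X)}\prod_j \mutilde(\dd y^*_j)^{n_j}$, rewrite the product of measures as an integral against $\Psi^k$, apply the higher-order Campbell--Little--Mecke formula together with Lemma~\ref{prop:marked_palm} to pass to the reduced Palm version $\mutilde^!_{\bm y^*}$ with fresh i.i.d.\ marks at the inserted atoms, and then factorize to read off $\bm S^*$, $\mutilde'$, and $f_{U_n\mid \bm Y}$ from the cancellation of $M_{\Phi^k}(\dd\bm y^*)$. The only cosmetic difference is that the paper makes explicit the function $g(\bm x,\bm s,\Psi)$ plugged into Theorem~\ref{teo:clm_mult}, whereas you describe the same identification more informally via the factorial measure of $\Psi$ weighted by $\prod_j s_j^{n_j}$; this is a matter of presentation, not substance.
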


First, we point out that the Laplace functional of $\mutilde '$ is an exponential tilting of the one of $\mutilde^!_{\bm y^*}$, that is to say the distribution of $\mutilde '$ is absolutely continuous with respect to the distribution of $\mutilde^!_{\bm y^*}$,  with density given by $f_{\mutilde '}(\mu) \propto \e^{-u \mu (\X)}$.
Theorem~\ref{teo:post} resembles the posterior characterisation of $\tilde p$ in the case of NRMIs \citep{JaLiPr09} and normalised IFPPs \citep{ArDeInf19}.  Interestingly, unlike in the previous cases, here the law of $\mutilde '$  depends on the observed $\bm y^*$. Note that the expression \eqref{eq:Laplace_mu_post} is obtained without any specific assumption on the law of the point process $\Phi$. 
Specific expressions for the posterior distribution of $\mutilde$ are obtained when considering particular classes of point processes, as we showcase in the following examples.
For the sequel, we remind that  $\psi(u) := \E[\e^{-uS}]$  is the Laplace transform of a random variable  $S$, with distribution $H$, and we also define
\begin{equation}\label{eq:h_tilt}
    f_{S^\prime} (s; u) := \frac{\e^{-su}h(s)}{\psi(u)}=\frac{\e^{-su } h (s)}{\int_{\R^+}  \e^{-su}  h(s) \dd s}
\end{equation}
to be the density of the exponential tilting of $S$. 

We now specialise Theorem~\ref{teo:post} to our examples, whose posterior distribution was not previously available in the literature.

\begin{example}[Determinantal point process]\label{ex:dpp1}
Assume that $\Phi$ is a DPP on $\X$ as defined in Example \ref{ex:dpp_def}, whose kernel $K$ has eigenvalues $\lambda_j$ in \eqref{eq:k_mercer} all strictly smaller than one. 
Then, the random measure $\mutilde^\prime$ in Theorem~\ref{teo:post}  is such that
\[
   \mutilde'\stackrel{d}= \sum_{j \geq 1} S^\prime_j \delta_{{X}^\prime_j} 
\]
where $\Psi':=\sum_{j \geq 1} \delta_{({X}_j', {S}_j')}$ is a marked point process whose unmarked version $\Phi' := \sum_{j \geq 1} \delta_{{X}_j '}$ is a DPP with density with respect to $\mathrm{PP}(\omega)$ given by $f_{\Phi^\prime}(\nu) \propto \det\{C^\prime(x_i, x_j)\}_{(x_i, x_j) \in \nu}$ and the marks $S_j'$ are i.i.d. with distribution \eqref{eq:h_tilt}.
The operator $C^\prime$ is defined as 
\[
    C^\prime(x, y) = \psi(u) \left[C(x, y) - \sum_{i, j = 1}^k \left(C_{\bm y^*}^{-1} \right)_{i, j} C(x, y^*_i) C(y, y^*_j)\right], \qquad x, y \in R,
\] 
where $C_{\bm y^*}$ is the $k\times k$ matrix with entries $C(y^*_i, y^*_j)$. 
See \Cref{teo:post_dpp} in \Cref{app:ddp} for a proof.
Note that for simulation purposes, it is useful to know or approximate the eigendecomposition of the kernel $C^\prime$:
\[
   C^\prime(x, y) = \sum_{j} \gamma_j \varphi^\prime_j(x)\overline{\varphi^\prime}_j(y).
\]
The $\gamma_j$'s and $\varphi^\prime_j$'s can be approximated numerically using the Nystr{\"o}m method \citep{nystrom}.
\end{example}

In order to specialize Theorem \ref{teo:post}
to the Shot-Noise Cox process, we introduce
$\bm T^\prime=(T^\prime_1,\dots,T^\prime_k)$, the indicator variables describing a partition of $[k]= \{ 1, \ldots , k\}$, $k$ being the number of distinct values in $\bm y$, namely $T^\prime_j=l$ if and only if $j$ belongs to the $l$-th element of partition introduced in Lemma \ref{lemma:moment_cox}.
In practice, $T^\prime_j$, $j=1,\ldots,k$, describe the points of the Poisson process $\Lambda$ associated to the unique value $y^*_1,\ldots,y^*_k$. 
 
\begin{example}[Shot-Noise Cox process]\label{ex:sncp1}
Assume that $\Phi \sim \mathrm{SNCP}(\gamma, k_\alpha, \omega)$. Then, the random measure $\mutilde^\prime$ in \eqref{eq:mu_post} has a mixture representation.  Let $\bm T^\prime=(T^\prime_1,\dots,T^\prime_k)$ be latent indicator variables introduced above and 
denote by $|\bm T^\prime|$ the number of distinct values in $\bm T^\prime$. We have
\[
    \mutilde^\prime \mid \bm T^\prime \deq \mutilde_0 + \sum_{\ell = 1}^{|\bm T^\prime|} \mutilde_{\ell},
\]
where $\mutilde_0 = \sum_{j \geq 1} \tilde S_j \delta_{\tilde X_j}$ and $\mutilde_{\ell} = \sum_{j \geq 1} \tilde S_{\ell, j} \delta_{\tilde X_{\ell, j}}$, as $\ell=1, \ldots, |\bm T^\prime|$, are independent, and all their weights are i.i.d. with distribution \eqref{eq:h_tilt}.
The unmarked point processes $\Phi_0 = \sum_{j \geq 1} \delta_{\tilde X_j}$ is distributed as $\Phi_0 \sim \mathrm{SNCP}(\gamma \psi(u), k_\alpha, \e^{- \gamma (1-\psi(u))} \omega)$.
Each $\Phi_\ell = \sum_{j \geq 1} \delta_{\tilde X_{\ell, j}}$ is a Poisson point process with random intensity $\omega_{\ell} (\dd x) = \gamma \psi(u) k_{\alpha} (\zeta_\ell - x) \dd x$, where
\[
    \zeta_\ell \sim f_{\zeta_\ell}(v) \dd v \propto \prod_{j: T_j^\prime = \ell} k_\alpha(y^*_j - v) \omega(\dd v).
\]
Finally, $\prob(\bm T^\prime = \bm t^\prime) \propto  \exp\left\{\gamma |\bm t^\prime| (\psi(u) - 1)\right\} \prod_{\ell=1}^{|\bm t^\prime|} \eta((y^*_j: t^\prime_j = \ell))$. For the proof and the formal statement, see Theorem \ref{teo:post_cox} in Appendix H.2.
\end{example}

\subsection{Marginal and predictive distributions}

In the present section, we describe the marginal and predictive distributions for a sample from $\tilde{p}$ as in \eqref{eq:model}, when the prior distribution is defined as $\tilde p\sim {\rm nRM} (\plaw_\Phi; H)$.
As mentioned at the beginning of the present section, the almost sure discreteness of $\ptilde$ entails that the sample $\bm{Y}=(Y_1, \ldots, Y_n)$ is equivalently characterised by  $(\bm{Y}^*, \tilde \pi)$, where 
$\bm Y^* = (Y^*_1, \ldots, Y^*_k)$ is the vector of distinct values and $\tilde \pi$ is the random partition of $[n]: =  \{1, \ldots, n\}$ of size $K_n$, 
identified by the equivalence relation $i \sim j$ if and only if  $Y_i = Y_j$. 

\begin{theorem}\label{teo:marg}
The marginal distribution of a sample $\bm Y$ with  $K_n= k$ distinct values $\bm{Y}^*$ and associated counts $n_1, \ldots , n_k$ is
\[
\prob (\bm{Y} \in \dd \bm{y}) = \prob(\bm Y^* \in \dd \bm y^*, \tilde \pi = \pi)  = \int_{\R_+} \frac{u^{n-1}}{\Gamma(n)} \E \left[ \e^{-  u \mutilde^!_{\bm y^*}(\X)} \right] \prod_{j=1}^k \kappa(u, n_j) \dd u \, M_{\Phi^{(k)}}(\dd \bm y^*),
\]
where $\mutilde^!_{\bm y^*}$ is defined in \eqref{eq:palm_mu} and $M_{\Phi^{(k)}}$ is the $k$-th factorial moment measure of $\Phi$
defined in Section~\ref{sec:back_on_PP}.
\end{theorem}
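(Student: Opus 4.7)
The plan is to express the marginal of $\bm Y$ as an expectation under $\plaw_\Psi$ involving a product of infinitesimal masses $\prod_j \mutilde(\dd y^*_j)^{n_j}$, remove the troublesome factor $T^{-n}$ via the gamma identity used to introduce $U_n$, and then collapse the resulting multi-variate integral over $\mutilde$ by $k$-fold application of the Campbell--Little--Mecke (CLM) formula for the marked process $\Psi$. The reduced Palm measure $\plaw^{\bm y^*}_{\Psi^!}$ is exactly what arises from CLM after the $k$ distinct points $y^*_1,\ldots,y^*_k$ have been ``extracted'' from $\Psi$.

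Concretely, starting from
\[
\prob(\bm Y\in\dd\bm y)
=\E\!\left[\prod_{i=1}^n\frac{\mutilde(\dd y_i)}{T}\right]
=\E\!\left[\frac{1}{T^n}\prod_{j=1}^k\mutilde(\dd y^*_j)^{n_j}\right],
\]
I would use $T^{-n}=\Gamma(n)^{-1}\int_0^\infty u^{n-1}\e^{-uT}\dd u$ and Fubini to get
\[
\prob(\bm Y\in\dd\bm y)
=\int_{\R_+}\frac{u^{n-1}}{\Gamma(n)}\,
\E\!\left[\e^{-u T}\prod_{j=1}^k\mutilde(\dd y^*_j)^{n_j}\right]\dd u.
\]
The inner expectation is where the work lies: writing $\mutilde(\dd y^*_j)^{n_j}=S_j^{n_j}\delta_{X_j}(\dd y^*_j)$ for the appropriate points of $\Psi$, I apply the $k$-th order CLM formula to $\Psi$. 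This yields an integral with respect to the $k$-th moment measure $M_{\Psi^k}(\dd\bm x\,\dd\bm s)$, evaluated on a reduced Palm version $\Psi^!_{\bm x,\bm s}$; schematically,
\[
\E\!\left[\e^{-u T}\prod_{j=1}^k\mutilde(\dd y^*_j)^{n_j}\right]
=\int\prod_{j=1}^k s_j^{n_j}\delta_{x_j}(\dd y^*_j)\,
\E\!\left[\e^{-u\left(\sum_j s_j+\mutilde^!_{\bm x,\bm s}(\X)\right)}\right]
M_{\Psi^k}(\dd\bm x\,\dd\bm s).
\]

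Now I exploit the marked structure. Since the marks $S_j$ are i.i.d.\ from $H$ and independent of the atoms $X_j$, the moment measure factorizes as $M_{\Psi^k}(\dd\bm x\,\dd\bm s)=M_{\Phi^k}(\dd\bm x)\prod_j H(\dd s_j)$; moreover Lemma \ref{prop:marked_palm} says the reduced Palm version $\Psi^!_{\bm x,\bm s}$ does not depend on $\bm s$, so I may write $\mutilde^!_{\bm x}$ and pull the $\e^{-u\mutilde^!_{\bm x}(\X)}$ factor outside the $\bm s$-integral. The mark integral separates into $k$ one-dimensional pieces $\int s^{n_j}\e^{-us}H(\dd s)=\kappa(u,n_j)$, while the $\delta_{x_j}(\dd y^*_j)$ factors pin $\bm x$ to $\bm y^*$ when integrating $M_{\Phi^k}$. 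Plugging back into the outer $\dd u$-integral gives exactly the claimed expression.

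The main obstacle is the rigorous application of CLM with powers $n_j\ge 1$ rather than simple indicators. The cleanest route is induction on $k$: the first application of CLM (as stated in Appendix \ref{sec:app_palm}) converts one factor $\mutilde(\dd y^*_1)^{n_1}$ into a $\Psi$-Palm integral; iterating $k-1$ further times uses the higher-order CLM formula, which is precisely what defines the $k$-th Palm distribution, so the factors $\delta_{x_j}$ survive intact because $\mutilde$ is simple and the distinct atoms $y^*_1,\ldots,y^*_k$ give distinct indices in the sum-over-tuples representation of $\prod_j\mutilde(\dd y^*_j)^{n_j}$. Once that bookkeeping is in place, the separation of the atom-part from the mark-part by independence and the identification of $\kappa(u,n_j)$ are immediate.
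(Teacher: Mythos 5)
Your proposal is correct and takes essentially the same approach as the paper. The paper's one-line proof of Theorem \ref{teo:marg} says ``integrate \eqref{eq:den} with respect to $u$'', and that equation \eqref{eq:den} is derived inside the proof of Theorem \ref{teo:post} by exactly the steps you describe: introduce the gamma identity $T^{-n}=\Gamma(n)^{-1}\int_0^\infty u^{n-1}\e^{-uT}\,\dd u$ (which is what the paper's auxiliary variable $U_n$ encodes), write $\prod_j \mutilde(\dd y^*_j)^{n_j}$ as an integral of $\prod_j s_j^{n_j}\delta_{y^*_j}(x_j)$ against $\Psi^k$, apply the $k$-th order CLM formula (Theorem \ref{teo:clm_mult}), and use the factorization $M_{\Psi^k}=M_{\Phi^k}\otimes H^k$ and the mark-independence of the reduced Palm kernel (Lemma \ref{prop:marked_palm}) to separate the $s_j$-integrals into the $\kappa(u,n_j)$ factors. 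One small note: the ``powers $n_j\ge 1$'' you flag as an obstacle are not really one, because the exponent lands on the mark $s_j$ rather than on the point process -- each distinct atom $y^*_j$ contributes degree one in $\Psi$, so the higher-order CLM formula applies directly with no need for a separate inductive argument on $k$.
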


Theorem~\ref{teo:marg} resembles the marginal characterisation of a sample from an NRMI \citep{JaLiPr09} and a normalised IFPP \citep{ArDeInf19}. However, there are key differences: in the case of NRMIs, $\mutilde^!_{\bm y^*} \stackrel{d}{=} \mutilde$ (i.e., it is distributed as the prior and, in particular, it does not depend on $\bm y^*$) so that $\E [ \exp(- u \mutilde^!_{\bm y^*}(\X))]$ can be evaluated explicitly via the L\'evy-Khintchine representation; 
in the case of IFPPs, $\mutilde^!_{\bm y^*}$ depends on $\bm y^*$ only through its cardinality and the latter expectation can be computed analytically.
In both cases, the $k$-th factorial moment measure has a product form: $M_{\Phi^{(k)}}(\dd \bm y^*) = \prod_{j=1}^k \omega(\dd y^*_j)$ where $\omega$ is a finite measure on $\X$.
This allows for the marginalization of the $\bm Y^*$'s, also obtaining an analytical expression for the prior induced on the random partition $\tilde \pi$, the so-called exchangeable partition probability function.
In our case, such a marginalisation is possible only from a formal point of view, and the resulting expression is generally intractable.
We now specialise Theorem~\ref{teo:marg} to important examples.

\begin{example}[Determinantal point process, cont'd]\label{ex:dpp2}
If $\Phi$ is a DPP, for all $x_1, \ldots, x_n$ such that $K(x_j, x_j) > 0$, $\Phi^!_{\bm x}$ is a DPP with kernel 
\[
    K^!_{\bm x}(x, y) = K(x, y) - \sum_{i, j=1}^k (K_{\bm x}^{-1}) K(x, x_i) K(y, x_j)
\]
where $K_{\bm x}$ is the $n\times n$ matrix with entries $K(x_i, x_j)$ \citep{Lav15}. Denote by $\lambda^{\bm x}_j$ the eigenvalues of $K^!_{\bm x}$.
Then,  $\Phi^!_{\bm x}(\X)$ is a Poisson-Binomial random variable with parameter $(\lambda^{\bm x}_j)_{j \geq 1}$, which entails $\E [ \exp(- u \mutilde^!_{\bm y^*}(\X))] = \prod_{j \geq 1} (1 - \lambda^{\bm x^*}_j + \lambda^{\bm x^*_j} \psi(u))$.
Moreover, the factorial moment measure equals
\[
    M_{\Phi^{(k)}}(\dd \bm y^*) = \det\{K(y^*_i, y^*_j)\}_{i, j =1}^k \omega^k(\dd \bm y^*).
\]
\MBtext{As an alternative to numerically computing the $\lambda^{\bm x}_j$'s, \citet{GhiloFeatures} proposed using Le Cam's approximation \citep{steele1994cam}, i.e., approximating $\Phi^!_{\bm x}(\X)$ with a Poisson random variable of mean $\lambda_{\bm x^*} := \int_{\X} K^!_{\bm x^*}(x, x) \dd x$, showing that the approximation error is small. 
In such a case  $\E [ \exp(- u \mutilde^!_{\bm y^*}(\X))] \approx \exp(\lambda_{\bm x^*}(\psi(u) - 1))$.}
\end{example}

\begin{example}[Shot-Noise Cox process, cont'd]\label{ex:sncp2}
If $\Phi$ is a SNCP, then $\Phi^!_{\bm x}$ can be written as in Lemma \ref{lem:palm_cox} of Appendix \ref{app:sncp}, while the moment measure is described in Lemma \ref{lemma:moment_cox}, which lead to the formula of the marginal distribution. However, a more transparent expression is obtained by considering also the variables $\bm T^\prime$ introduced in Example \ref{ex:sncp1}. Indeed, we have 
\[
    \prob(\bm Y \in \dd \bm y , \bm T^\prime= \bm t^\prime \mid U_n=u) \propto \gamma^k \prod_{j=1}^k \kappa(u, n_j)  \prod_{\ell =1}^{|\bm t^\prime|} \eta( \bm y^*_\ell ) \e^{\gamma |\bm t^\prime| (\psi(u) - 1)} \dd \bm y^*,
\]
where $\bm y^*_\ell = (y^*_j \colon t^\prime_j = \ell)$.
\end{example}

We now focus on the predictive distribution, i.e., the distribution of  $Y_{n+1}$, conditionally to the observable sample $\bm{Y}$.
\begin{theorem}\label{teo:pred_conditional}
Assume that the factorial moment measure  $M_{\Phi^{(k)}}$ is absolutely continuous with respect to the product measure $P_0^k$, where $P_0$ is a non-atomic probability on $(\X,\mathcal{X})$ and let $m_{\Phi^{k}}$ be the associated Radon-Nikodym derivative. Let $U_n$ be a random variable with density  $f_{U_n \mid \bm Y}$ as in  \eqref{eq:post_u}.
Then, conditionally on $\bm Y = \bm y$ and $U_n=u$, the predictive distribution of $Y_{n+1}$ is such that 
\begin{equation} \label{eq:prediction_rule}
    \begin{split}
           \prob(Y_{n+1} \in A \mid \bm Y = \bm y, U_n=u)  &\propto \sum_{j=1}^k \frac{\kappa(u, n_j + 1)}{\kappa(u, n_j)}   \delta_{y_j^*} (A) \\
   & \qquad\qquad +\int_A \kappa(u, 1) \frac{\E \left[ \e^{- u \mutilde_{(\bm y^*, y)}^!(\X)} \right]}{\E \left[ \e^{- u \mutilde_{\bm y^*}^!(\X)} \right]} \frac{m_{\Phi^{k+1}}(\bm y^*, y)}{m_{\Phi^{k}}(\bm y^*)} P_0(\dd y) ,
    \end{split}
\end{equation}
where $A \in \mathcal{X}$. 
\end{theorem}
The predictive distribution from Theorem~\ref{teo:pred_conditional} can be interpreted using a generalised Chinese restaurant metaphor.
As in the traditional Chinese Restaurant process, customers sitting at the same table eat the same dish, whereas the same dish cannot be served at different tables.
The first customer arrives at the restaurant and sits at the first table, eating dish $Y_1 = Y^*_1$ such that
$\prob(Y^*_1 \in \dd y) \propto M_{\Phi}(\dd y)$.
The second customer sits at the same table as the first customer with probability proportional to $\kappa(U_1, 2) / \kappa(U_1, 1)$, or sits at a  new table and eats a new dish  with probability proportional to
\[
   \prob( Y_2 \in \X \setminus \{ y_1^*\} \mid Y^*_1 = y^*_1) 
    \propto
    \frac{ \kappa(U_1, 1)}{\E \left[ \e^{- U_1 \mutilde_{y^*_1}^!(\X)} \right]m_{\Phi}( y^*_1)}
    \int_\X 
     \E \left[ \e^{-  U_1 \mutilde_{(y^*_1, y)}^!(\X)} \right] m_{\Phi^{2}}(y^*_1, y) P_0(\dd y),
\]
where $U_1 \sim f_{U_1 \mid y_1}$ is defined in \eqref{eq:post_u}.
The distribution of the new dish is proportional to 
\[
    \mathsf{P}( Y_2 \in \dd y | Y_1^* = y^*_1) \propto \E \left[ \e^{- U_1 \mutilde_{(y^*_1, y)}^!(\X)} \right] m_{\Phi^{2}}(y^*_1, y) P_0(\dd y).
\]
The metaphor proceeds as usual for a new customer entering the restaurant at time $n+1$: they either sit at one of the $K_n = k$ previously occupied tables, with probability proportional to $\kappa(U_{n}, n_j + 1) / \kappa(U_{n}, n_{j})$, $j=1, \ldots, k$ or sit at a new table eating dish $Y^*_{k + 1} = y$ with probability proportional to
\[
    \kappa(U_n, 1) \frac{\E \left[ \e^{- U_n \mutilde_{(\bm y^*, y)}^!(\X)} \right]}{\E \left[ \e^{- U_n \mutilde_{\bm y^*}^!(\X)} \right]} \frac{m_{\Phi^{k+1}}(\bm y^*, y)}{m_{\Phi^{k}}(\bm y^*)} P_0(\dd y).
\]
Observe that the original formulation of the Chinese restaurant process \citep{Aldous85} is stated only in terms of the seating arrangements in a restaurant with infinite tables, i.e., the random partition. 
Then, $K_n$ ``dishes'' are sampled i.i.d. from a distribution on $\X$ and assigned to each of the occupied tables.
Instead, in our process, the dishes are not i.i.d. and are chosen as soon as a new table is occupied.
In particular, note that the probability of occupying a new table depends on $n$, $K_n$, and the unique values $\bm Y^*$. 
This also sheds light on how our model differs from \emph{Gibbs type priors} \citep{deblasi2013gibbs}, where the probability of occupying a new table depends only on $n$ and $K_n$.

We now specialise the predictive distribution in some examples of interest.

\begin{example}[Determinantal point process, cont'd]\label{ex:dpp3}
Let $K_{\bm x}$ be defined as in Example \ref{ex:dpp2}. Following Example \ref{ex:dpp2}, the ratio of the expected values in \eqref{eq:prediction_rule} equals 
\[
    \prod_{j \geq 1} \frac{1 - \lambda^{\bm x, y}_j + \lambda^{\bm x, y}_j \psi(u)}{1 - \lambda^{\bm x}_j + \lambda^{\bm x}_j \psi(u)},
\]
which can be numerically computed via  Le Cam's approximation.
Moreover, by the Schur determinant identity, denoting by $K_{y, y} := K(y, y)$, $K_{\bm y^*, y} := (K(y^*_1, y), \ldots, K(y^*_k, y))^\top$,  and by $K_{\bm y^*}$ the $k \times k$ matrix with entries $(K(y^*_i, y^*_j))_{i,j=1}^k$, we have
\[
    \frac{m_{\Phi^{k+1}}(\bm y^*, y)}{m_{\Phi^{k}}(\bm y^*)}P_0(\dd y) = \left(K_{y, y} - K_{\bm y^*, y}^\top K_{\bm y^*, \bm y^*}^{-1} K_{\bm y^*, y}\right) \dd y.
\]
\end{example}

\begin{example}[Shot-Noise Cox process, cont'd.]
As in Example \ref{ex:sncp2}, we consider the auxiliary latent vector $\bm T^\prime$, and its corresponding realization $\bm t^\prime$, to describe the predictive distribution. In particular, as discussed in Appendix \ref{app:sncp_pred}, we can think of a restaurant metaphor whereby tables are divided into rooms, such that $t^\prime_j = \ell$ if table $j$, serving dish $y^*_j$, is located in room $\ell$. If, after $n$ customers have entered the restaurant, having occupied $k$ tables in $|\bm T^\prime|=|\bm t^\prime|$ rooms, customer $n+1$ can do one of the following choices.
\begin{itemize}
    \item[(i)] Customer $n+1$ sits at one of the previously occupied tables, say table $j$, with probability proportional to $\kappa(u, n_j + 1) / \kappa(u, n_j)$.
    \item[(ii)] Customer $n+1$ sits at a new table in room $\ell \in \{1, \ldots, |\bm t^\prime_k|\}$ with probability proportional to $\kappa(u, 1) \int_\X  \eta(\bm y^*_\ell, y) / \eta(\bm y^*_\ell) \dd y$, eating a dish $y$ generated from a probability density proportional to $ \eta(\bm y^*_\ell, y) / \eta(\bm y^*_\ell) \dd y$. 
    \item[(iii)]  Customer $n+1$ enters an empty room (sitting, a fortiori, in a new table) with probability proportional to $\kappa(u, 1) \e^{\gamma (\psi(u)- 1))} \int_\X  \eta(y) \dd y $. The customer eats a new dish $y$ generated from a  probability density function proportional to $\eta(y) \dd y $. 
\end{itemize}

The process described above shares similarity with the Chinese restaurant franchise metaphor of \cite{Teh06} in the case of a single restaurant \citep[see][for further details]{Cam18SJS}, the main difference being that if a new table is chosen, the new dish will be different from previously eaten dishes almost surely.
\end{example}

\subsection{Distribution of the distinct values}

From the marginal characterisation in Theorem~\ref{teo:marg}, it is easy to derive the joint distribution of $(K_n, \bm Y^*)$, that is, the joint distribution of the number of the distinct values and their position in a sample of size $n$.

\begin{proposition}\label{prop:joint_nclus}
  Given a set of distinct points $\bm y^* = (y^*_1, \ldots, y^*_k)$, let $(q_r)_{r \geq 0}$ be the probability mass function of the number of points in $\Phi^!_{\bm y^*}$, i.e., $q_r := \prob \left(\Phi^!_{\bm y^*} (\X) = r \right)$. Define
    \[
        V(n_1, \ldots, n_k; r) := \int_{\R_+} \frac{u^{n-1}}{\Gamma(n)}  \psi(u)^r  \prod_{j=1}^k \kappa(u, n_j) \dd u. 
    \]
   Then, the joint distribution of $K_n$ and $\bm Y^*$ equals
   \begin{align*}
        \prob(K_n = k, \bm Y^* \in \dd \bm y^*) = \frac{1}{k!} \sum_{r=0}^\infty \left(\sum_{(n_1, \ldots, n_k) \in \mathbb S_n^k} \binom{n}{n_1 \cdots n_k}  V(n_1, \ldots, n_k; r) \right) q_r \     M_{\Phi^{(k)}}(\dd \bm y^*),
    \end{align*}
    where $\mathbb S_n^k$ denotes the set of $k$-compositions, i.e. $\mathbb S_n^k = \{(n_1, \ldots, n_k) \ : n_i \geq 1, n_1 + \cdots + n_k = n\} \subset \mathbb N^k$.
\end{proposition}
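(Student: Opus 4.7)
The starting point is Theorem \ref{teo:marg}. The plan is to (a) evaluate the Laplace functional $\E[\e^{-u\mutilde^!_{\bm y^*}(\X)}]$ in closed form by conditioning on the cardinality of the reduced Palm process, and then (b) sum the joint law of $(\bm Y^*, \tilde \pi)$ over all set partitions of $[n]$ with exactly $k$ blocks.

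For step (a), recall that $\mutilde^!_{\bm y^*}$ is the total-mass functional of the reduced Palm process $\Psi^!_{\bm y^*}$ of the marked point process $\Psi$. By Lemma \ref{prop:marked_palm}, $\Psi^!_{\bm y^*}$ has the law of $\Phi^!_{\bm y^*}$ equipped with i.i.d.\ marks drawn from $H$. Hence, conditionally on $\Phi^!_{\bm y^*}(\X) = r$, the total mass $\mutilde^!_{\bm y^*}(\X)$ is a sum of $r$ i.i.d.\ draws from $H$, whose Laplace transform at $u$ equals $\psi(u)^r$. Averaging over the pmf $(q_r)_{r \geq 0}$ of the Palm count gives
\[
    \E\left[\e^{-u\mutilde^!_{\bm y^*}(\X)}\right] = \sum_{r \geq 0} q_r\, \psi(u)^r.
\]
Substituting this expansion into Theorem \ref{teo:marg} and interchanging sum with integral (Tonelli, since every term is non-negative) yields
\[
    \prob(\bm Y^* \in \dd \bm y^*, \tilde \pi = \pi) = \sum_{r \geq 0} q_r\, V(n_1, \ldots, n_k; r)\, M_{\Phi^k}(\dd \bm y^*),
\]
where $(n_1, \ldots, n_k)$ are the block sizes of $\pi$. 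The right-hand side depends on $\pi$ only through its block sizes, and symmetrically so, because $\prod_j \kappa(u, n_j)$ is symmetric in the $n_j$'s.

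For step (b), I would then sum over set partitions of $[n]$ into $k$ nonempty blocks using the identity
\[
    \sum_{\pi : K_n = k} f(n_1^\pi, \ldots, n_k^\pi) = \frac{1}{k!} \sum_{n_1 + \cdots + n_k = n} \binom{n}{n_1 \cdots n_k}\, f(n_1, \ldots, n_k),
\]
valid for any permutation-symmetric $f$: a set partition whose block-size multiset has multiplicities $(m_j)_j$ corresponds to $k!/\prod_j m_j!$ ordered compositions and is weighted by $\binom{n}{n_1\cdots n_k}$ on the right, yielding exactly $k!$ times the number of set partitions with the prescribed sizes. Applying this to $f(n_1, \ldots, n_k) = V(n_1, \ldots, n_k; r)$ for each $r$ and interchanging the $r$-sum with the composition-sum (all non-negative) delivers the claimed formula.

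The substantive ingredient is (a), where the marked-Palm identity reduces $\mutilde^!_{\bm y^*}(\X)$ to a random sum of i.i.d.\ marks and hence allows a completely explicit evaluation of the Laplace transform. The remainder is combinatorial bookkeeping, and I do not anticipate any genuine obstacle.
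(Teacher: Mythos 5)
Your proof follows the paper's own argument essentially verbatim: conditioning on the cardinality of $\Phi^!_{\bm y^*}$ via the marked-Palm lemma gives $\E[\e^{-u\mutilde^!_{\bm y^*}(\X)}]=\sum_{r\ge 0} q_r\psi(u)^r$, followed by a Fubini/Tonelli interchange in Theorem \ref{teo:marg} and a sum over set partitions of $[n]$ with $k$ blocks. You are, if anything, slightly more explicit than the paper on the combinatorial identity converting the partition sum into the $\frac{1}{k!}$-weighted sum over ordered compositions, which the paper compresses into ``by the definition of $V$.''
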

Note that the joint distribution of $K_n$ and $\bm Y^*$ does not factorise. Moreover, it is not easy to marginalise $\bm Y^*$ since each $q_r$ in general depends on $\bm Y^*$. 
Now, it is interesting to specialise Proposition \ref{prop:joint_nclus} for a special choice of the distribution of the jumps $S_j$.
\begin{corollary}\label{cor:uniques_gamma}
    Under the same assumptions as in Proposition \ref{prop:joint_nclus}, if $S_j\iid \mbox{Gamma}(a, 1)$, then
\begin{equation*}\label{eq:distinct_gamma}
   \prob(K_n = k, \bm Y^* \in \dd \bm y^*) = (-1)^n\mathcal{C}(n, k; -a) \left(\sum_{r \geq 0}  q_r \frac{\Gamma\left(k + r) a\right)}{\Gamma\left((k + r)a + n\right)} \right) M_{\Phi^{(k)}}(\dd \bm y^*)
\end{equation*}
where for any non-negative integer $n\ge 0$, $0\le k\le n$, $\mathcal{C}(n,k;a)$ denotes the central generalized factorial coefficient. See \cite{charalambides_enumerativeC}.
\end{corollary}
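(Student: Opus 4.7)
The corollary is a direct specialization of Proposition~\ref{prop:joint_nclus} obtained by plugging in the Gamma density
$h(s) = s^{\alpha-1}\e^{-s}/\Gamma(\alpha)$ and collapsing the multinomial sum into a generalized factorial coefficient. So the plan is:
(i) compute $\psi(u)$ and $\kappa(u,n)$ explicitly for the Gamma law;
(ii) evaluate $V(n_1,\ldots,n_k;r)$ as a Beta integral;
(iii) collapse $\sum_{n_1+\cdots+n_k=n}\binom{n}{n_1,\ldots,n_k}\prod_j(\alpha)_{n_j}$ into $\mathcal{C}(n,k;-\alpha)$ via a standard enumerative identity;
(iv) reassemble.

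For step (i), straightforward Laplace-transform calculations give
$\psi(u)=(1+u)^{-\alpha}$ and $\kappa(u,n)=(\alpha)_n\,(1+u)^{-(\alpha+n)}$, where $(\alpha)_n=\Gamma(\alpha+n)/\Gamma(\alpha)$ denotes the rising factorial. For step (ii), substituting into the definition of $V$ and using $\sum_j n_j = n$, all $u$-dependence concentrates in a single factor $(1+u)^{-(r+k)\alpha-n}$, so
\[
V(n_1,\ldots,n_k;r) \;=\; \frac{1}{\Gamma(n)}\,\prod_{j=1}^k(\alpha)_{n_j}\int_0^\infty \frac{u^{n-1}}{(1+u)^{(r+k)\alpha+n}}\,\dd u \;=\; \prod_{j=1}^k(\alpha)_{n_j}\;\frac{\Gamma((r+k)\alpha)}{\Gamma((r+k)\alpha+n)},
\]
where the last equality uses the Beta integral $\int_0^\infty u^{n-1}(1+u)^{-b}\dd u = \Gamma(n)\Gamma(b-n)/\Gamma(b)$, and notably the $\Gamma(n)$ factor cancels.

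Step (iii) is the main combinatorial content. The key identity is
\[
\sum_{\substack{n_1,\ldots,n_k\ge 1 \\ n_1+\cdots+n_k=n}}\binom{n}{n_1,\ldots,n_k}\prod_{j=1}^k(\alpha)_{n_j}\;=\;(-1)^n\,k!\;\mathcal{C}(n,k;-\alpha),
\]
which I would obtain via exponential generating functions: the left-hand side is $n!$ times the coefficient of $t^n$ in $((1-t)^{-\alpha}-1)^k$, and by Charalambides's generating-function characterization of generalized factorial coefficients, $\sum_{n\ge k}\mathcal{C}(n,k;-\alpha)\,t^n/n! = ((1+t)^{-\alpha}-1)^k/k!$, so the substitution $t\mapsto -t$ yields the displayed identity. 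This is the step most sensitive to the sign/normalization conventions for $\mathcal{C}(n,k;\sigma)$, and I would point to \cite{charalambides_enumerativeC} for the precise form used.

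Finally, step (iv) assembles the pieces: inserting the closed form for $V$ and the combinatorial identity into Proposition~\ref{prop:joint_nclus}, the $k!$ factors cancel and one obtains \eqref{eq:distinct_gamma}. The only real obstacle is bookkeeping the sign and the convention for $\mathcal{C}(n,k;-\alpha)$ so that it matches the one used in \eqref{eq:distinct_gamma}; all other steps are routine once the Gamma Laplace transforms are written down.
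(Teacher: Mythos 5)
Your route is the same as the paper's: compute the Gamma Laplace transforms $\psi(u)=(1+u)^{-\alpha}$ and $\kappa(u,n)=(\alpha)_n(1+u)^{-(\alpha+n)}$, reduce $V$ to a Beta integral, collapse the multinomial sum with the generalized-factorial-coefficient identity, and assemble. Steps (i)--(iii) are correct, and in fact your evaluation of $V$ is the accurate one: the $\Gamma(n)$ from $\int_0^\infty u^{n-1}(1+u)^{-b}\,\dd u=\Gamma(n)\Gamma(b-n)/\Gamma(b)$ does cancel the $1/\Gamma(n)$ built into the definition of $V$, giving
\[
V(n_1,\ldots,n_k;r)=\prod_{j=1}^k(\alpha)_{n_j}\,\frac{\Gamma((r+k)\alpha)}{\Gamma((r+k)\alpha+n)},
\]
with no residual $1/\Gamma(n)$.

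However, step (iv) as written is inconsistent with your step (ii): if you insert your $V$ (which has no $1/\Gamma(n)$) into Proposition~\ref{prop:joint_nclus} and use $\frac{1}{k!}\sum\binom{n}{n_1\cdots n_k}\prod_j(\alpha)_{n_j}=(-1)^n\mathcal{C}(n,k;-\alpha)$, you obtain
\[
\prob(K_n=k,\bm Y^*\in\dd\bm y^*)=(-1)^n\mathcal{C}(n,k;-\alpha)\left(\sum_{r\ge 0}q_r\frac{\Gamma((k+r)\alpha)}{\Gamma((k+r)\alpha+n)}\right)M_{\Phi^k}(\dd\bm y^*),
\]
which is \emph{not} \eqref{eq:distinct_gamma} --- it is \eqref{eq:distinct_gamma} with the prefactor $1/\Gamma(n)$ removed. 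The paper's own proof mis-evaluates the Beta integral (it silently drops the $\Gamma(n)$ in the numerator, writing $\int u^{n-1}(1+u)^{-((k+r)\alpha+n)}\dd u = \Gamma((k+r)\alpha)/\Gamma((k+r)\alpha+n)$, which is only true for $n\in\{1,2\}$) and thus carries an extraneous $1/\Gamma(n)$ into the displayed result. A quick sanity check confirms your version: if $\Phi$ is a single deterministic point so that $K_n\equiv 1$, $q_0=1$, and $(-1)^n\mathcal{C}(n,1;-\alpha)=(\alpha)_n=\Gamma(\alpha+n)/\Gamma(\alpha)$, the $\Gamma(n)$-free formula gives probability $1$ as it must, while \eqref{eq:distinct_gamma} gives $1/\Gamma(n)$. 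So your intermediate computations are right and in fact expose an arithmetic error in both the paper's proof and the corollary statement; the gap in your write-up is that you claimed to arrive at \eqref{eq:distinct_gamma} when your own (correct) $V$ yields the formula without the $1/\Gamma(n)$.
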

Observe that these $\mathcal{C}(n,k;a)$ can be computed using the recursive formula
\begin{equation*}
    \mathcal{C}(n,k;a) = a \mathcal{C}(n-1,k-1;a)
    +(k a-n+1)\mathcal{C}(n-1,k;a)   
\end{equation*}
with the initial conditions $\mathcal{C}(0,0,a)=1$, $\mathcal{C}(n, 0,a) = 0$ for any $n > 0$ and $\mathcal C(n, k, a) = 0$ for $k > n$.
See \cite{charalambides_enumerativeC} for further details.

Using Corollary \ref{cor:uniques_gamma}, we now consider a concrete example highlighting the difference between a repulsive and a non-repulsive point process to show the great flexibility of our prior.
To this end, we compute $\prob(K_n = k, \bm Y^* \in \dd \bm y^*)$ for $n=5$ and $a=1$
under two possible priors for $\Phi$, i.e., a Poisson process and a DPP.
In particular, the Poisson process prior has intensity $\omega(\dd x) = \indicator_{D}(x) \dd x$ where $D = (-1/2, 1/2)$.
The DPP prior is defined on $D$ as well and is characterised by a Gaussian covariance function $K(x, y) = 5 \exp(- (x - y)^2 / 0.3)$.
We consider different settings: in the first (I) $\bm y^* = (-x, x)$, in the second (II) $\bm y^* = (-0.3, -0.3 + 2x)$ and in the third (III) $\bm y^* = (-x, 0, x)$. Moreover, we assume that $x$ varies in the interval $(0, 0.4)$.
Figure~\ref{fig:prior_kn} shows the joint probability of $K_n$ and $\bm Y^*$ in the different scenarios. 
Note that under the Poisson process prior (solid line, left plot), the probability does not depend on $\bm y^*$.
Instead, under the DPP prior, the three panels show that the probability increases when the points in $\bm y^*$ are well separated. 
The central panel shows the comparison between the probabilities under setting (I) (\dashed) and setting (II) (\chain) for the DPP prior. This shows that these probabilities depend not only on the distance between the 
$\bm y^*$, but also on their position in $D$. 
The right panel displays setting (I) (\dashed) and setting (III) (\dotted) under the DPP prior, showing the effect of observing 0 beyond observing $-x,x$.

\begin{figure}
    \centering
    \includegraphics[width=\linewidth]{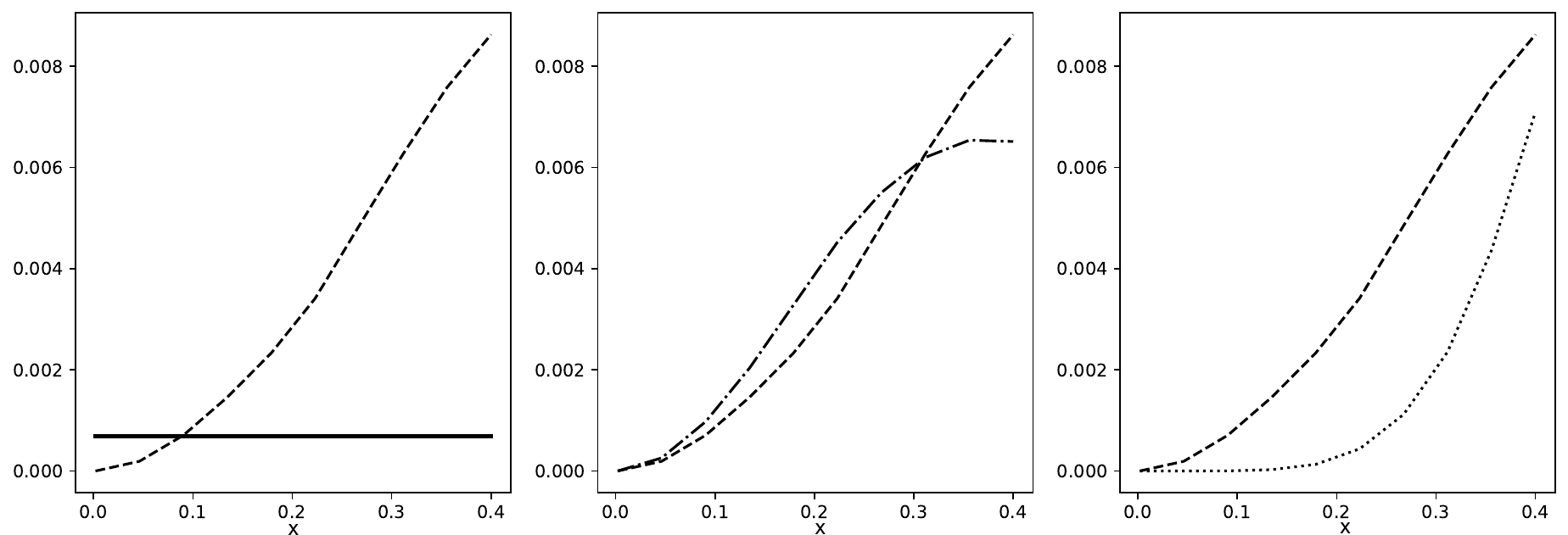}
    \caption{$\prob(K_n = k, \bm Y^* \in \dd \bm y^*)$ when $n=5$, $a=1$, as a function of the parameter $x$ under different settings.
    Left plot, setting (I) under the Poisson process (\full) and DPP (\dashed) prior.
    Middle plot: setting (I) (\dashed) and setting (II) (\chain) under the DPP prior.
    Right plot setting (I) (\dashed) and setting (III) (\dotted) under the DPP prior.}
    \label{fig:prior_kn}
\end{figure}

\section{Bayesian Hierarchical Mixture Models}\label{sec:mix}

Discrete random probability measures are commonly employed as mixing measures in Bayesian mixture models to address clustering and density estimation.
In a mixture model, instead of modelling observations via \eqref{eq:model}, we use the de Finetti measure $Q$ in \eqref{eq:model} as a prior for latent variables $Y_1, \ldots, Y_n$. 

It is convenient to consider random measures on an extended space $\X \times \W$ instead of $\X$ to encompass location-scale mixtures. Hence, we deal with the random measure
\begin{equation}\label{eq:mixing_meas}
    \mutilde(\cdot) = \sum_{j \geq 1 }  S_j \delta_{(X_j, W_j)},
\end{equation}
obtained by marking the points in the point process $\Psi$ with i.i.d. marks $(W_j)_{j \geq 1}$ from an absolutely continuous probability distribution over $\W$, whose density we will denote by $f_W(\cdot)$. 

To formalise the mixture model, consider $\Z$-valued observations $Z_1, \ldots, Z_n$ and a probability kernel $f: \mathbb Z \times \X \times \W \rightarrow \R_+$, such that $z \mapsto f(z \mid y, v)$ is a probability density over $\mathbb Z$ for any $(y, v ) \in \mathbb \X \times \W$. We assume both  
$\mathbb Z$ and $\W$ to be Polish spaces, endowed with the associated Borel $\sigma$-algebras. 
More precisely, the statistical model we are dealing with is the following:
\begin{equation}\label{eq:mixture}
\begin{aligned}
    Z_i \mid Y_i, V_i & \ind f(\cdot \mid Y_i, V_i), \qquad i=1, \ldots, n \\
    Y_i, V_i \mid \mutilde & \iid \frac{\mutilde}{\mutilde(\X \times \W)} \\
    \mutilde & \sim \plaw_\mu .
\end{aligned}
\end{equation}
Usually, the kernel $f(\cdot \mid Y_i, V_i)$ is the Gaussian density with mean $Y_i$ and variance (if data are univariate) or covariance matrix (if data are multivariate) $V_i$, where the points $X_j$'s are the component-specific means and the points $W_j$'s the component-specific variances in a Gaussian mixture model.
Note that, in accordance with previous literature on repulsive mixtures, we assume that the component-specific variances $V_i$'s are i.i.d. and that interaction is on the locations $Y_i$'s.

Let $M$ be the number of support points in $\mutilde$. It is convenient to introduce auxiliary component indicator variables $C_i$, $i=1, \ldots, n$  taking values in $\{1, \ldots, M\}$, such that $\prob(C_i = h \mid \mutilde) \propto S_h$.
By noticing that $(Y_i, V_i)_{i \geq 1} = (X_{C_i}, W_{C_i})_{i \geq 1}$,
\eqref{eq:mixture} is equivalent to
\begin{equation}\label{eq:mixture_clus}
\begin{aligned}
    Z_i \mid \mutilde, C_i & \ind f(\cdot \mid X_{C_i},  W_{C_i}), \qquad i=1, \ldots, n \\
    C_i \mid \mutilde & \iid \mbox{Categorical}(S_1 / S_\bullet, \ldots, S_M / S_\bullet) \\
    \mutilde & \sim \plaw_\mu
\end{aligned}
\end{equation}
where $S_\bullet := \sum_{j = 1}^M S_j$.
Following the standard terminology of \cite{ArDeInf19} and \cite{griffin2011posterior}, we define 
$\bm{S}^{(a)} = \{S^{(a)}_1, \ldots, S^{(a)}_{K_n}\}$ as the distinct values in $\{S_{C_i}\}_{i \geq 1}$ and refer to them as \emph{active} jumps.
Moreover, we set the \emph{non-active} jumps $\bm { S}^{(na)} := \{S_1, \ldots, S_M\} \setminus \bm{ S}^{(a)}$.
Analogously, we define $\bm{X}^{(a)}, \bm{ X}^{(na)}$ and $\bm{ W}^{(a)}, \bm{ W}^{(na)}$ and refer to them as the active and non-active atoms, respectively.
Note that, according to our notation, $\bm X^{(a)}$ coincide with the unique values in the latent variables $Y_1, \ldots, Y_n$ in \eqref{eq:mixture}.

In the rest of this section, we describe two Markov chain Monte Carlo algorithms for posterior inference under model \eqref{eq:mixture}. These are based on the posterior characterization of $\mutilde$ given $Y_1, \ldots, Y_n$ in \Cref{teo:post} and the predictive distribution of $Y_{n+1}$ in \Cref{teo:pred_conditional}.
Following \cite{papaspiliopoulos2008retrospective}, we term them \emph{conditional} and \emph{marginal} algorithms, respectively. In fact, in the former, the random measure $\mutilde$ is part of the
state space of the algorithm, while $\mutilde$ is integrated out in the latter.
We assume that, conditionally to the number of points $M$, the distribution of the vector $(X_1, \ldots, X_M) \in \X^M$ defining the support of $\Phi$ has a density. 
With an abuse of notation, we denote this density by $f_\Phi$;  the existence of the density of the point process $\Phi$  guarantees that the  $f_\Phi$ above is indeed proportional to the density of the point process itself \citep[see][for more details]{MoWaBook03}.

\subsection{A conditional MCMC algorithm}\label{sec:cond_algo}

Theorem~\ref{teo:post} can be easily rephrased to encompass the case of a point process $\Phi$ with i.i.d. marks $(W_j)_{j \geq 1}$: this is useful to derive the full-conditional of $\mutilde$ given $C_1, \ldots, C_n, \bm{ X}^{(a)}, \bm{ W}^{(a)}$ and $U_n$.
\begin{corollary}\label{cor:post_mix}
    Consider the model \eqref{eq:mixture_clus}, and define $n_h = \sum_{i=1}^n \indicator_{h}(C_i)$. Then, conditionally on $C_1, \ldots, C_n, K_n = k, \bm{X}^{(a)} = \bm x^{(a)}, \bm{W^{(a)}} = \bm{w}^{(a)}$ and $U_n =u$, $\mutilde$ is equal in distribution to
    \[
        \sum_{h=1}^k S^{(a)}_h \delta_{(x^{(a)}_h, w^{(a)}_h)} + \mutilde^\prime
    \]
    where $S^{(a)}_h \sim f_{S^{(a)}_h}(s) \propto s^{n_h} \e^{-us} H(\dd s)$, and $\mutilde^\prime := \sum_{h \geq 1} S^{(na)}_h \delta_{( X^{(na)}_h,  W^{(na)}_h)}$ is such that $\mutilde^\prime_X := \sum_{h \geq 1} S^{(na)}_h \delta_{X^{(na)}_h}$ has Laplace functional 
    \[
         \E\left[\exp \int_\X - f(z) \mutilde^\prime_X(\dd z) \right] = \frac{\E \left[ \exp \left\{- \int_\X (f(z) + u) \mutilde^!_{\bm x^{(a)}}(\dd z) \right\} \right]}{\E \left[ \exp \left\{- \int_\X u \mutilde^!_{\bm x^{(a)}}(\dd z) \right\} \right]},
    \]
    where $\mutilde^!_{\bm x^{(a)}}$ is as in \eqref{eq:palm_mu} for $\bm x = \bm x^{(a)}$, and $ W^{(na)}_h \iid f_W$.
\end{corollary}

Corollary \ref{cor:post_mix}  leads to the algorithm below, where we write  ``$\cdot \mid \text{rest}$'' to mean that we are conditioning with respect to all the variables but the ones appearing on the left-hand side of the conditioning symbol.

\begin{enumerate}
    \item Sample $U_n \mid \text{rest} \sim \mbox{Gamma}(n, S_\bullet)$, where $S_\bullet := \sum_{j=1}^M S_j$.
    
    \item Sample each $C_i$ independently from a discrete distribution over $\{1, \ldots, M\}$ such that
    \[  
        \prob(C_i = h \mid \text{rest}) \propto S_h f(Z_i \mid X_h, W_h).
    \]
    Let $k$ be  the number of unique values in $\bm X^{(a)} := \{X_{C_i}, \ i=1, \ldots, n\}$.
    Define $\bm W^{(a)}$ and $\bm S^{(a)}$ analogously.
    
    \item\label{item:complex} Sample $\mutilde$ using the distribution in Corollary \ref{cor:post_mix}.  For the \emph{active} part, sample each $S^{(a)}_h$ independently from a distribution on $\R_+$ with density
    \[
        f_{S_h}(s) \propto \e^{-U_n s} s^{n_h} H(\dd s).
    \]
    For the \emph{non-active} part, first sample 
    $\mu'=\sum_{j \geq 1} S^{(na)}_j \delta_{X^{(na)}_j}$ from the law of a random measure with Laplace transform \eqref{eq:Laplace_mu_post}. See below for some guidelines on how to perform this step.
    Then, sample the corresponding marks $W^{(na)}_j \iid f_W$.
    
    \item Let $\bm X^{(a)} = \{X^{(a)}_1, \ldots, X_k^{(a)}\}$ where $\tilde X^{(a)} = (X^{(a)}_1, \ldots, X_k^{(a)})$ is sampled from the density on $\X^k$ given by
    \[
        \prob(\tilde X^{(a)} \in \dd \bm x \mid \text{rest}) \propto f_{\Phi}(\bm x, \bm X^{(na)}) \prod_{h=1}^k \prod_{i: C_i = h} f(Z_i \mid x_h, W_h), \quad \bm x \in \X^k.
    \]

    \item Sample each entry in $\bm W^{(a)}$ independently from a density on $\R_+$ proportional to
    \[
        f_W(w) \prod_{i: C_i = h} f(Z_i \mid X^{(a)}_h, w).
    \]

    \item Set $\bm X = \bm X^{(a)} \cup X^{(na)}$, $\bm W = \bm W^{(a)} \cup \bm W^{(na)}$ $\bm S = \bm S^{(a)} \cup \bm S^{(na)}$ and $m$ equal to the cardinality of these sets.
\end{enumerate}

Among the steps in the algorithm, the most complex is the third, which involves sampling the random measure $\mutilde^\prime$ with a given Laplace transform. All the remaining ones can be handled either 
by closed-form full-conditionals (depending, for instance, on the law of the jumps $H(\dd s)$ and the prior $f_W$) or by simple Metropolis-Hastings steps, yielding a Metropolis-within-Gibbs
algorithm.
To sample $\mutilde^\prime$,  we need to sample sequentially as follows: ($3.i$) the support points $\Phi^\prime = \sum_{j \geq 1} \delta_{X^{(na)}_j}$, ($3.ii$) the jumps $\{ S^{(na)}_j\}$, and ($3.iii$) the marks $W^{(na)}_j \iid f_W$, which is trivial.  Step ($3.i$) requires simulating a point process, and we refer to the literature on this topic, e.g., via perfect sampling \citep[see Algorithm 11.7 in][]{MoWaBook03} or via a birth-death Metropolis-Hastings step \citep[see Algorithm 11.3 in][]{MoWaBook03}.  A particular case of our Algorithm 1-6, when $\Phi$ is a Gibbs point process with a density, has been designed in 
 \cite{beraha21}. 
Step
 ($3.ii$) needs to iid sample from the tilted density 
 $f_{S_j} \propto \e^{-Us} H(\dd s) $. When $H(\dd s)$ is a gamma density, this is straightforward; for different $H(\dd s)$, we refer to \cite{ArDeInf19}.
On the other hand, when considering a DPP prior, we can use Algorithm 1 in \cite{Lav15} \MBtext{(adapted from \citealp{Hough:etal:06})} to obtain a perfect sample from the law of $\bm X^{(na)}$.

\subsection{A marginal MCMC algorithm}

When integrating out $\mutilde$ from \eqref{eq:mixture}, Theorem~\ref{teo:pred_conditional} can be used to devise a marginal MCMC strategy, i.e., an instance of \emph{collapsed} Gibbs sampler.
In such an algorithm, we alternate between updating the observation-specific parameters $\{(Y_i, V_i)\}_{i=1}^n \mid U_n, Z_1, \ldots, Z_n$ and sampling $U_n \mid \{(Y_i, V_i)\}_{i=1}^n, Z_1, \ldots, Z_n$, which is as \eqref{eq:post_u}. To sample $\{(Y_i, V_i)\}_{i=1}^n \mid U_n, Z_1, \ldots, Z_n$ we perform a Gibbs scan, whereby we sample $(Y_i, V_i)$ from its full conditional distribution, i.e. $(Y_i, V_i) \mid (\bm Y, \bm V)_{-i}, U_n, Z_1, \ldots, Z_n$, where $ (\bm Y, \bm V)_{-i} = (Y_1, V_1), \ldots, (Y_{i-1}, V_{i-1}), (Y_{i+1}, V_{i+1}), (Y_{n}, V_{n})$. It can be easily seen \citep{neal2000markov} that $\prob((Y_i, V_i) \in \dd y \,  \dd v \mid (\bm Y, \bm V)_{-i}, U_n, Z_1, \ldots, Z_n) \propto \prob(Y_i \in \dd y \mid \bm Y_{-i}, U_n) f_W(v) \dd v f(Z_i \mid y, v)$ where  $\prob(Y_i \in \dd y \mid \bm Y_{-i}, U_n)$ is the conditional prior law of $Y_i$. In particular, by exchangeability, for any $\bm y_{-i} \in \X^{n-1}$ the conditional prior for $Y_i$ satisfies $\prob(\bm Y_i \in \dd y \mid \bm Y_{-i} = \bm y_{-i}, U_{n}) = \prob(\bm Y_n \in \dd y \mid \bm Y_{-n} = \bm y_{-i}, U_{n})$, where the latter is the law described Theorem~\ref{teo:pred_conditional}.
The plain application of this result yields the following MCMC algorithm.
\begin{enumerate}
    \item Sample $U_n \mid \text{rest}$ from the full conditional distribution in \eqref{eq:post_u}.
    \item For each observation, sample $(Y_i, V_i)$ from
    \begin{multline*}
        \prob((Y_i, V_i) \in (\dd y \, \dd v ) \mid \text{rest}) \propto \sum_{j=1}^k \frac{\kappa(u, n^{(-i)}_j + 1)}{\kappa(u, n^{(-i)}_j)}  f(Z_i \mid Y_j^*, V^*_j) \delta_{(Y_j^*, V^*_j)}(\dd y \, \dd v) + \\
        \kappa(u, 1) \frac{\E \left[ \e^{-  u \mutilde_{\bm y^{*(-i)}, y}^!(\X)} \right]}{\E \left[ \e^{- u \mutilde_{\bm y^{*(-i)}}^!(\X)} \right]} \frac{m_{\Phi^{k+1}}(\bm y^{*(-i)}, y)}{m_{\Phi^{k}}( \bm y^{*(-i)})} f(Z_i \mid y, v) P_0(\dd y) f_W(v) \dd v , 
    \end{multline*}
    where the superscript $(-i)$ means that the $i$-th observation is removed from the state for the computations. Here, $\bm Y^*$ and $\bm V^*$ are the vectors of unique values in $(Y_1, \ldots, Y_n)$ and $(V_1, \ldots, V_n)$ respectively.
    \item Sample the unique values $\bm Y^*$ from a joint distribution on $\X^k$ proportional to
    \[
        \prob(\bm Y^* \in \dd \bm y^*) \propto m_{\Phi^k}(\bm y^*) \prod_{h=1}^k \prod_{\{i:\,  Y_i = Y^*_h\}} f(Z_i \mid y^*_h, V^*_h)P_0^k(\dd \bm y^*), \quad \bm y^* \in \X^k.
    \]
    \item 
    Sample each of unique values $\bm V^*$ independently from
    \[
        \prob(V^*_h \in \dd v^*) \propto f_W(v^*) \prod_{i: C_i = h} f(Z_i \mid Y^*_h, v^*) \dd v^*, \quad v^* \in W.
    \]
\end{enumerate}
Steps 3 and 4 of the above are not strictly necessary to produce an ergodic MCMC sampler that targets the posterior distribution. However, it is common practice in MCMC for mixture models to add these steps \MBtext{as it has been empirically demonstrated that they speed up the convergence of the algorithm and improve its mixing, see, e.g, \cite{neal2000markov}}.
Step 2 of the algorithm above requires the computation of
\[
    \int_{\X \times \W} \frac{m_{\Phi^{k+1}}(\bm y^{*(-i)}, y)}{m_{\Phi^{k}}(\bm y^{*(-i)})} f(Z_i \mid y, v) P_0(\dd y) f_W(v) \dd v
\]
which might be challenging in situations where data are multidimensional.
However, \cite{xie2019bayesian} shows how this can be overcome using numerical quadrature.
Moreover, in the higher dimensional setting, we can adapt the strategy devised by Neal in his Algorithm 8 by introducing $L$ auxiliary variables and replacing Step 2 with:
\begin{enumerate}
    \item[2'.a ] For $\ell = 1, \ldots, L$, sample $Y^*_{k + \ell}$ from 
    \[
        \prob(Y^*_{k + \ell} \in \dd y \mid \text{rest}) \propto \frac{m_{\Phi^{k+1}}(\bm y^{*(-i)}, y)}{m_{\Phi^{k}}( \bm y^{*(-i)})} P_0(\dd y)
    \]
    and $V^*_{k + \ell} \iid f_W$.
    \item[2'.b ] Set $(Y_i, V_i)$ equal to $(Y^*_h, V^*_h)$ with probability proportional to
    \begin{equation*}
    \begin{cases}
        \frac{\kappa(u, n^{(-i)}_j + 1)}{\kappa(u, n^{(-i)}_j)}  f(Z_i \mid Y_h^*, V^*_h), & \text{ for } h=1, \ldots, k \\
        \frac{1}{L} \kappa(u, 1) \frac{\E \left[ \exp(- u \mutilde_{\bm y^{*(-i)}, Y^*_k}^!(\X)) \right]}{\E \left[ \exp(- u \mutilde_{\bm y^{*(-i)}}^!(\X)) \right]} f(Z_i \mid Y_h^*, V^*_h), & \text{ for } h=k+1, \ldots, k + L.
    \end{cases}
    \end{equation*}
   The computation of the ratio of expected values can be challenging if $\Phi$ is a DPP; see Example \ref{ex:dpp2} for further details, while it poses no problems when $\Phi$ is a SNCP.
\end{enumerate}

\subsection{Shot-Noise Cox Process mixtures as mixtures of mixtures}\label{sec:sncp_mixture_of_mixture}

Let us draw an interesting connection between a mixture model driven by a SNCP and the \textit{mixtures of mixtures} model, whereby the density of each mixture component is approximated by a mixture model.
This model was previously considered in \cite{WalliMix}, who proposed a Bayesian methodology and \cite{aragam2020identifiability}, which studied identifiability and estimation within a frequentist setting.

First, recall from \Cref{ex:sncp_def} that we can equivalently write $\Phi \mid \Lambda = \sum_{h=1}^{n(\Lambda)} \Phi_h$, where, with our notation, each $\Phi_h$ is a \emph{group}.
Observe that, for appropriate choices of $k_\alpha$, the points in a \emph{group} 
will be closer than points belonging to different \emph{groups}. 
When we embed a random probability measure built from \eqref{eq:shot-noise} in a Bayesian mixture model as done at the beginning of Section~\ref{sec:mix}, we obtain that the atoms of the mixture are randomly \emph{grouped} together.
Hence, we rewrite the random population density as
\[
  f(z) = \frac{1}{S_\bullet} \sum_{j \geq 1} S_j f(z \mid X_j, W_j) \equiv \frac{1}{S_\bullet} \sum_{h=1}^{n(\Lambda)} \sum_{j \geq 1}  S_{h, j} f(z \mid  X_{h, j},  W_{h, j}).
\]
On the right-hand side, we first sum over the atoms of $\Lambda$ and then over the atoms of each of the \emph{groups} $\Phi_j$'s. With an abuse of notation, we have introduced a second subscript to $S_h$, $X_h$, and $W_h$ to keep track that each point of $X_h$ (and its marks) is assigned to a point in $\Lambda$. It is possible to pass from the single-index summation to the double-index thanks to the variables $T_j$ introduced in \Cref{ex:sncp_def}.
Let us define
\begin{equation}\label{eq:sncp_comp}
  \tilde f_h(z) := \frac{1}{P_h} \sum_{j \geq 1} S_{h, j} f(z \mid  X_{h, j},  W_{h, j}), \ \textrm{where } P_h := \sum_{j \geq 1 } S_{h, j},  
\end{equation}
which is a (random) probability density function over $\X$.
We clearly see that the random population density is equal to $f(z) = S_\bullet^{-1} \sum_{h=1}^{n(\Lambda)} P_h \tilde f_h(z)$.
Hence, a SNCP mixture model can be written as a \emph{mixture of mixtures}, where each component $\tilde f_h(z)$ is expressed as a mixture model itself.
Therefore, we can consider the SNCP mixture model as a nonparametric generalisation of the \emph{mixtures of mixtures} model in \cite{WalliMix}.

Observe that the SNCP mixture induces a two-level clustering. 
At the first level, the parameters $(Y_i, V_i)$ in \eqref{eq:mixture} induce the standard partition of the observations by the equivalence relation $i \sim j$ if and only if $(Y_i, V_i) = (Y_j, V_j) = (X_h, W_h)$ for some $h$. Let $C_i = h$  
if and only if $(Y_i, V_i) = (X_h, W_h)$.
Then, at the second level, each point $(X_h, W_h)$ can be referred to as one of the \emph{groups} $\Phi_j$ thanks to the variables $T_j$ introduced in \Cref{ex:sncp_def}.
Hence, we can refer observations to the \emph{groups} using the variables $(T_{C_1}, \ldots, T_{C_n})$. In the next subsection, we call the partition induced by the $T_{C_i}$'s as the \emph{grouping} of the observations.

\section{Numerical illustrations}
\label{sec:numeric}

Here, we consider two simulated scenarios and a real dataset to highlight the core differences between a mixture model with repulsive, i.i.d., or ``attractive'' atoms.
We focus on mixtures of Gaussian kernels, i.e., $f$ in \eqref{eq:mixture} is the Gaussian density where parameters $(Y_i, V_i)$  are the mean and the variance, respectively.
We compare posterior inference under three prior specifications for the measure $\mutilde$ in \eqref{eq:mixing_meas}; in all cases, the unnormalised weights are $S_j\iid \mbox{Gamma}(2,2)$.
Under the first model, the cluster centres $\{X_j\}_{j \geq 1}$ follow a DPP with kernel $K(x, y) = \rho \exp(-(x - y)^2/\alpha^2)$ for $\rho = 2$ and $\alpha= \sqrt{\pi} / 2$, paired with inverse-Gamma density $f_W$ for the prior of the $W_j$'s with both shape
and scale parameters equal to two. See Appendix \ref{app:numerical_examples} for further details on the DPP prior considered and Appendix \ref{app:dpp_density} for details on the computation of the DPP density with respect to a suitable Poisson process. The second model we consider is the normalised IFPP mixture model by \cite{ArDeInf19}, which corresponds to assuming that $\{(X_j, W_j)\}_{j=1}^K \mid K$ are i.i.d. from a Normal-inverse-Gamma density (i.e., $X_j|W_j\sim \mathcal{N}(0, 10 W_j)$ and $1/W_i\sim \mbox{Gamma}(2,2)$),  and we further assume $K-1 \sim \mbox{Poi(1)}$.
Our third choice of the marginal prior for the $X_j$'s is the SNCP process,  where $k_\alpha(\cdot)$ is the Gaussian density centred at the origin with standard deviation $\alpha$. Unless otherwise specified, we fix $\alpha=1$. We also fix $\gamma = 1$ and assume that the $W_j$'s are distributed as in the DPP case.
Prior elicitation for the SNCP mixture is reported in greater detail in Appendix \ref{app:numerical_examples}.
Moreover, we also discuss there the robustness of posterior inference with respect to hyperparameters in the case of a DPP and a SNCP prior, as well as the inherent difficulty of learning the repulsive parameters via a fully-Bayesian approach, i.e., assuming the parameters controlling the kernel of the DPP random and updating them as part of the MCMC algorithm.

Posterior inference for the model in \cite{ArDeInf19} is addressed via the \texttt{BayesMix} library \citep{bayesmix}.
To fit the SNCP mixture, we use the conditional algorithm in Section~\ref{sec:cond_algo}, where we further add to the MCMC state the points of $\Lambda$; see \Cref{app:scnp_algo} for further details. 
As for the repulsive mixture based on DPP, we use the conditional algorithm described in Section~\ref{sec:cond_algo}. See also \cite{beraha21} for further details.
A Python implementation of the different MCMC algorithms is available at \url{https://github.com/mberaha/interacting_mixtures}.
We run the MCMC algorithms for a total of $100,000$ iterations, discarding the first $50,000$  and keeping one every five iterations for a final size of $1,000$.
In the sequel, we compare the \emph{grouping} of the datapoints induced by the SNCP mixture, as defined above, with the standard clustering resulting from the use of  DPP and normalised IFPP mixtures; we also compare density estimation under the three models.

\subsection[Data from a mixture of t distributions]{Data from a mixture of $t$ distributions}\label{sec:simu1}

In the first scenario, we generate $200$ data points from a two-component mixture of univariate Student's $t$ distributions with three degrees of freedom, centred respectively in $-5$ and $+5$.
Posterior inference is summarised in Figure~\ref{fig:simu1}.
Both the DPP and SNCP mixtures identify two clusters, with the difference that the SNCP mixture activates between 25 and 40 Gaussian components, while the DPP only has two.
On the other hand, the IFPP mixture has between 3 and 8 active components. The co-clustering matrices for the IFPP and SNCP mixtures are shown in Figure~\ref{fig:simu1}, and the one associated with the DPP mixture is identical to that of the SNCP.
We observe that the IFPP model erroneously identifies three clusters:  the two large clusters, we would expect, and a third cluster, which collects the observations at the centre of the domain.
Surprisingly, this last cluster also contains some observations at the farthest right of the domain: this can be explained by the presence of a mixture component located near zero and with an extremely large variance, which produces an overabundant estimated cluster that
groups together data that are far apart.
All the models produce a satisfactory density estimate. However, as it is to be expected, the DPP mixture fails to capture the heavy tails of the $t$ distributions both at the centre of the domain and at the left and right extremities, resulting in a slightly poorer fit.

\begin{figure}
    \centering
    \begin{subfigure}[T]{0.24\textwidth}
        \centering
        \includegraphics[width=\textwidth]{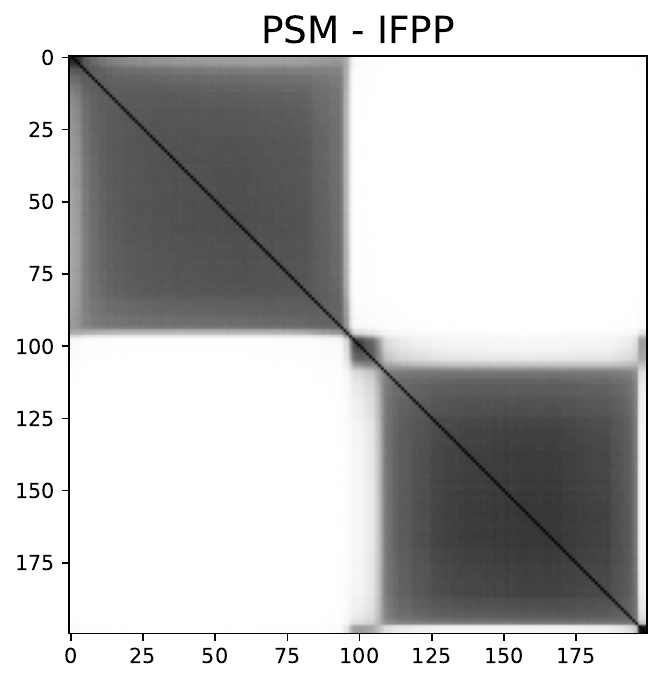}
        \caption{}
    \end{subfigure}
    \begin{subfigure}[T]{0.24\textwidth}
        \centering
        \includegraphics[width=\textwidth]{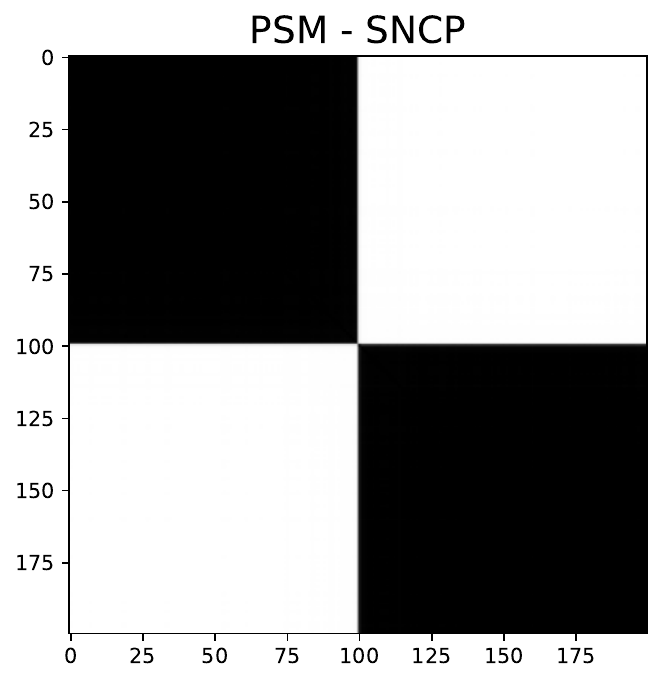}
        \caption{}
    \end{subfigure}
    \begin{subfigure}[T]{0.24\textwidth}
        \centering
        \includegraphics[width=\textwidth]{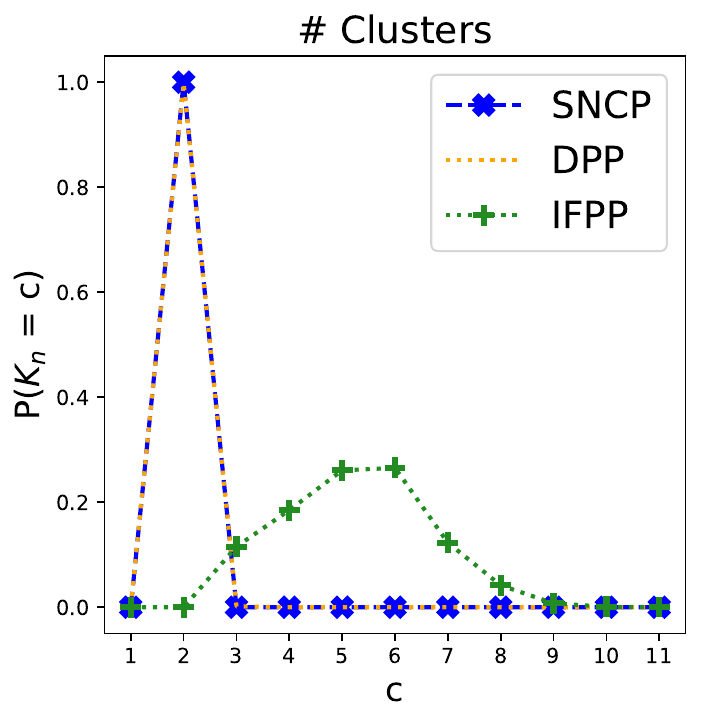}
        \caption{}
    \end{subfigure}
    \begin{subfigure}[T]{0.24\textwidth}
        \centering
        \includegraphics[width=\textwidth]{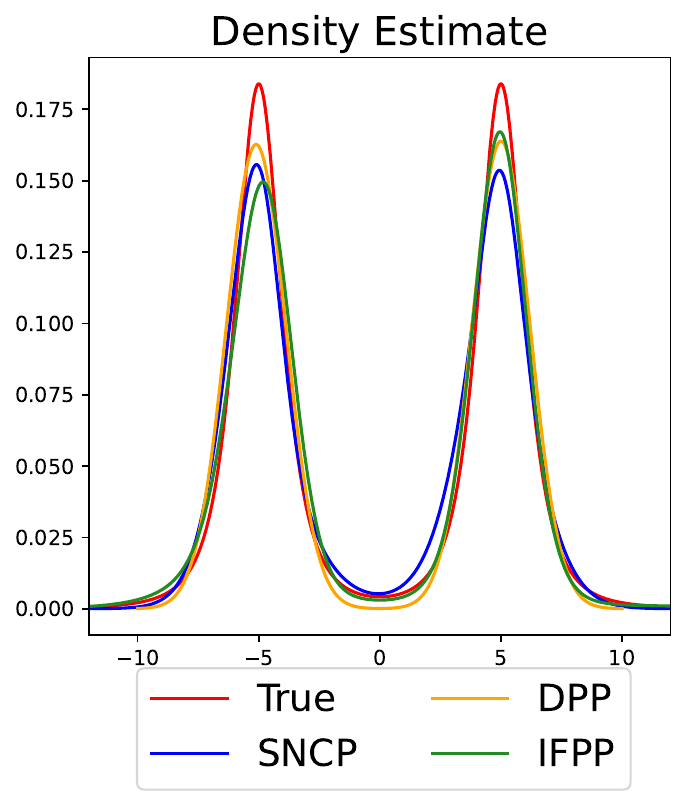}
        \caption{}
    \end{subfigure}
    \caption{From left to right: posterior co-clustering matrix under the normalised IFPP (a) and SNCP (b) mixtures, posterior distribution of the number of clusters (c), density estimates (d) for the simulated example in Section~\ref{sec:simu1}. In the posterior co-clustering matrices, data are sorted from the smallest to the largest.}
    \label{fig:simu1}
\end{figure}

\subsection{Data from a contaminated mixture}\label{sec:simu2}

Our second simulation follows the setup in \cite{miller2018robust}. We assume that a ``true'' data generating process with density $f_0 = 0.25 \mathcal N(-3.5, 0.8^2) + 0.3 \calN(0, 0.4^2) + 0.25 \calN(3, 0.5^2) + 0.2 \calN(6, 0.5^2)$ has been corrupted by some noise, and we observe data distributed as 
\begin{equation*}
        Z_i \mid \tilde p \iid \tilde f(\cdot) = \int_{\R} \mathcal N(\cdot \mid \theta, 0.25^2) \tilde p (\dd \theta), \qquad \tilde p \sim DP(500 f_0)
\end{equation*}
where $DP(500 f_0)$ denotes the Dirichlet process with total mass parameter equal to $500$ and centering probability measure induced by $f_0$.
We interpret $\tilde f$ as a perturbation of $f_0$ so that the goal is then to recover $f_0$ from the observed data.

Figure~\ref{fig:simu2} shows the posterior distribution for the number of clusters and the density estimates when $n=500$. The density estimate under the IFPP mixture is essentially identical to the one under the SNCP and is, therefore, omitted.
The SNCP mixture correctly identifies 4 clusters, while the DPP mixture chooses between 4 and 5 clusters (the point estimate of the partition agrees on 4 clusters) and the IFPP mixture between 4 and 7 clusters.
Hence, although there is no ``true'' number of clusters in the data-generating process, we can argue that the SNCP model produces the most parsimonious and interpretable clustering.
On the other hand, it is clear that the density estimate under SCNP is somewhat worse than under the DPP prior. Indeed, the SNCP mixture picks up all the noise induced by the perturbation.

\begin{figure}
    \centering
    \begin{subfigure}{0.35\textwidth}
        \centering
        \includegraphics[width=\textwidth]{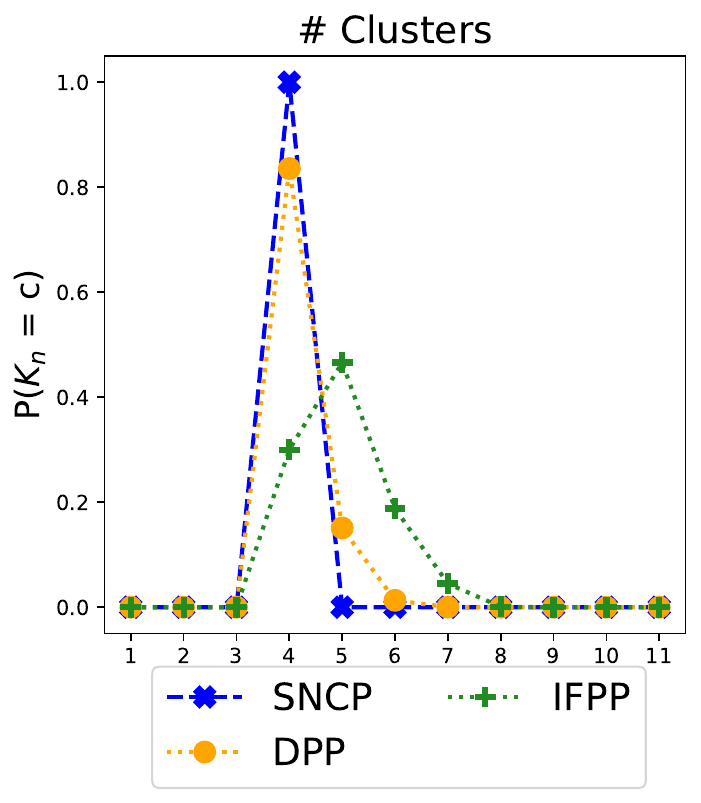}
        \caption{}
    \end{subfigure}
    \begin{subfigure}{0.35\textwidth}
        \centering
        \includegraphics[width=\textwidth]{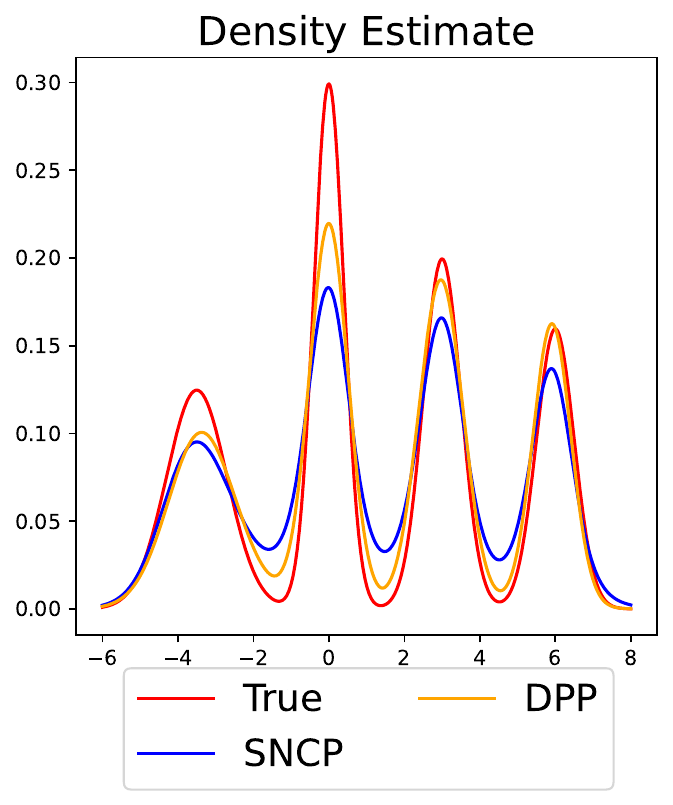}
        \caption{}
    \end{subfigure}
    \caption{Posterior distribution of the number of clusters (a) and density estimates (b) for the simulated example in Section~\ref{sec:simu2}.}
    \label{fig:simu2}
\end{figure}

\subsection{Shapley Galaxy Data}

We consider now a real dataset containing the velocities of $n=4206$ galaxies (measured in $10^5$ km/s) in the Shapley supercluster, available in the \texttt{R} package \texttt{spatstat}. This can be seen as a modern and improved version of the popular galaxy dataset \citep{roeder1990density}.
We compare the density and cluster estimates when using a subsample of $n=100, 500, 2000$ datapoints as well as the whole dataset.

We set the prior {distribution via an empirical Bayes approach; further details and the numerical values chosen} are reported in Appendix~\ref{app:galaxy}.
Posterior inference is summarised in Figure~\ref{fig:galaxy}.
We notice how under all three models, the estimated number of clusters grows with the sample size; however, while under the SNCP and DPP mixtures, it stabilises around $7$ clusters at most, under the IFPP prior, when $n=4206$, the model induces more than 16 clusters for some iterations of the MCMC algorithm. However, remember that for the SNCP mixtures we report the estimated \emph{groups}. 
The density estimates are consistent with the discussion in the examples above. The IFPP and SNCP mixtures induce very similar density estimates, while the DPP mixture tends to oversmooth the density in some regions to obtain well-separated components.
In this particular example, the density estimate under a SNCP prior reflects the empirical histogram carefully, picking up even very subtle ``bumps'' in the density, such as the one near the value $33$ in the $x$-axis. Since the measurements of galaxy velocities are typically accurate (especially at the scale we are considering), we can recommend using the SNCP mixture over the IFPP model for density estimation in this case.

\begin{figure}[t!]
    \centering
    \includegraphics[width=0.32\linewidth]{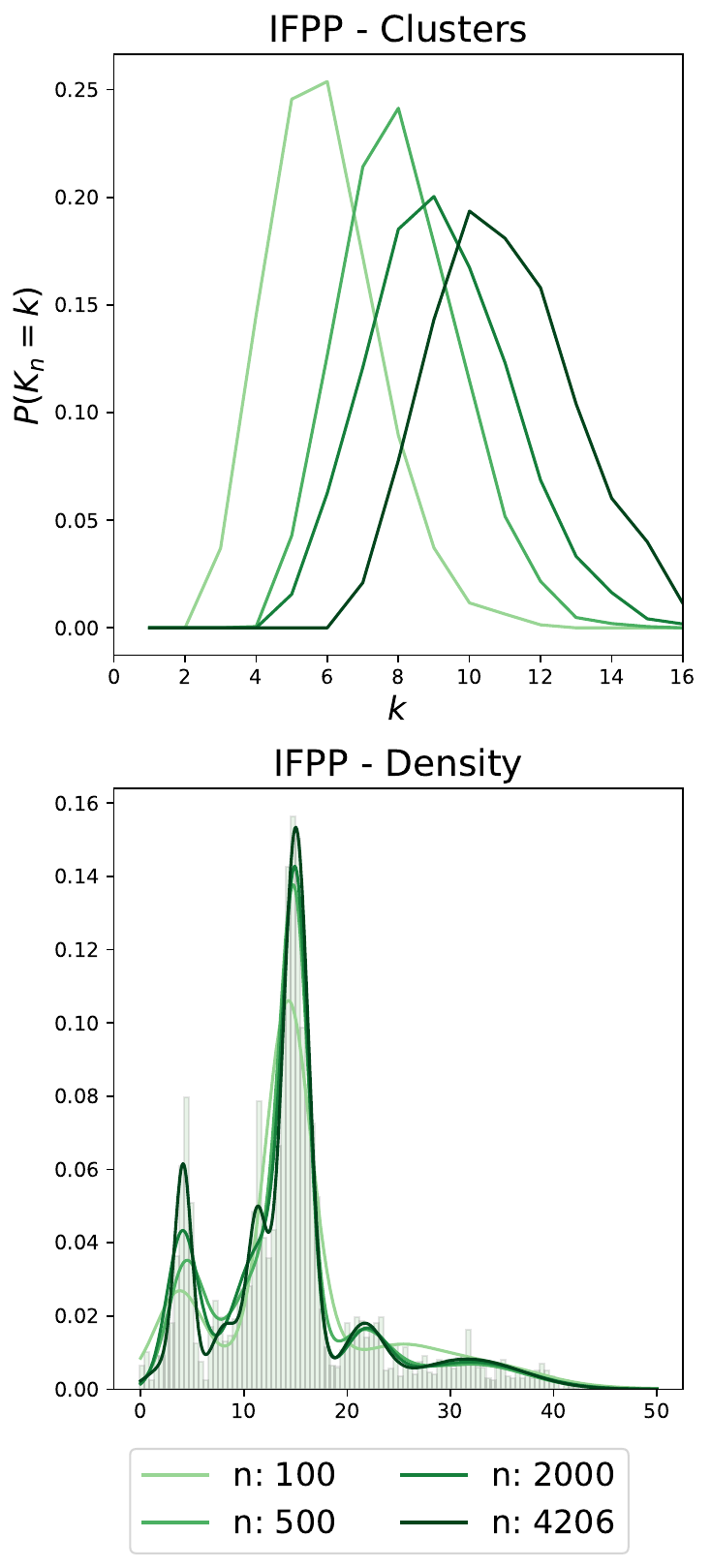}%
     \includegraphics[width=0.32\linewidth]{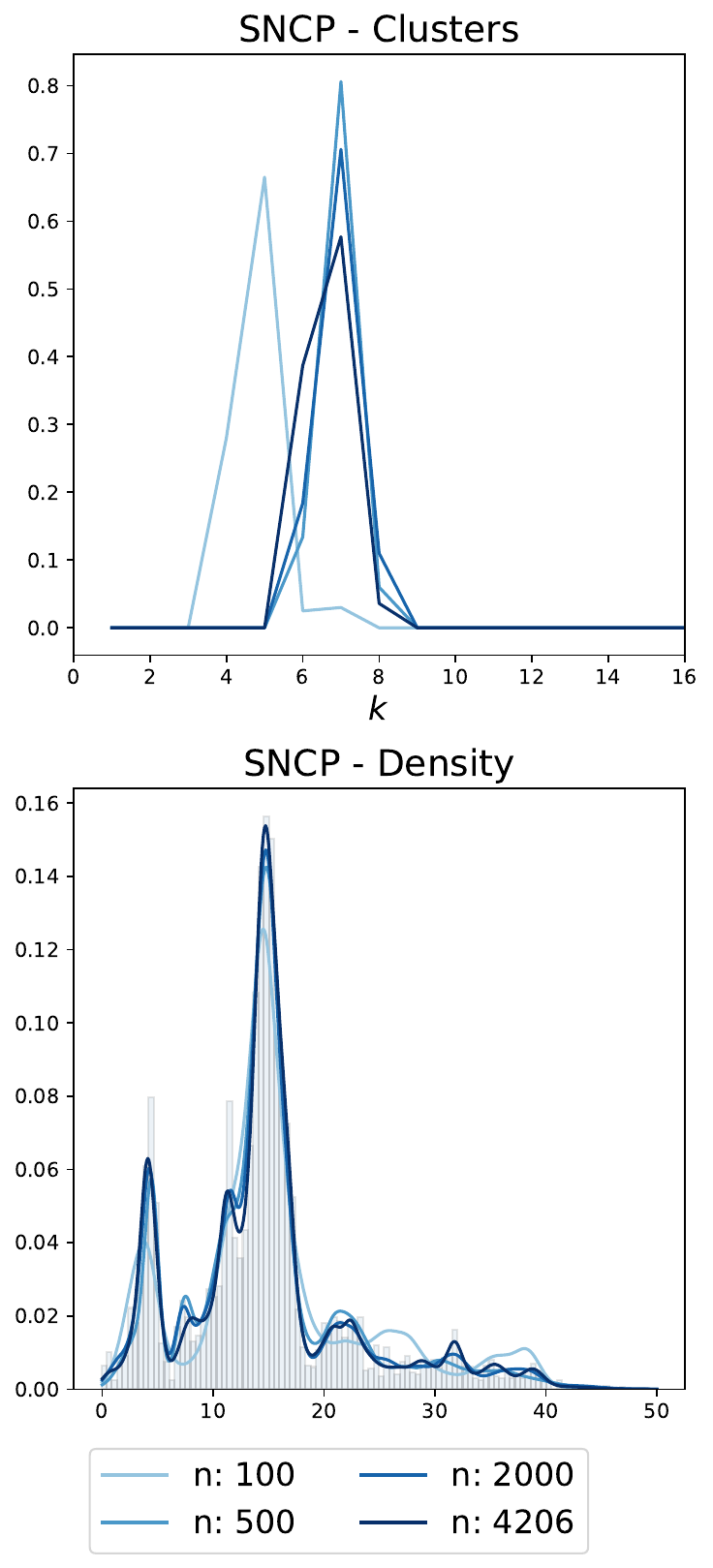}%
     \includegraphics[width=0.32\linewidth]{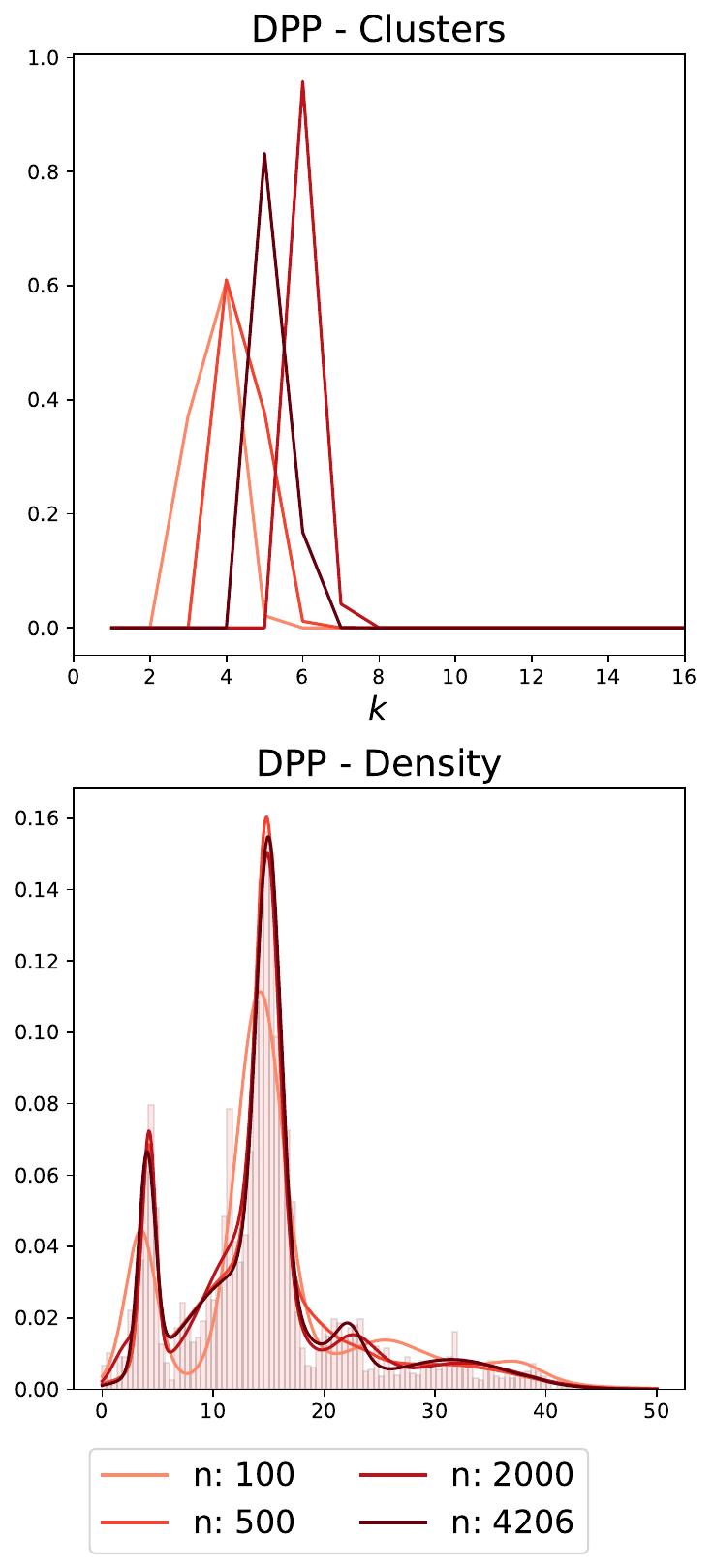}%
    \caption{Analysis of the Shapley galaxy data under the three different models.}
    \label{fig:galaxy}
\end{figure}

We remark here that a crucial parameter for the SNCP mixture is the scale $\alpha$, which controls how spread a component $\tilde f_j$ (defined in \eqref{eq:sncp_comp}) can be.
Indeed, if $\alpha$ is too large, the model will tend to group all the normal components in one single large $\tilde f_j$, thereby estimating only one cluster.
On the other hand, if $\alpha$ is too small, each $\tilde f_j$ is more likely to correspond to a single normal component, thus producing too many clusters. 
In our examples, we did not notice a particular sensibility to this choice for both density estimation and clustering, as long as the values are chosen in an appropriate range.
In Appendix \ref{app:alpha_choice}, we report a sensitivity analysis for the choice of $\alpha$ using the Shapley galaxy data.

\section{Discussion and future works}
\label{sec:discussion}

In this work, we have investigated discrete random probability measures with interaction across support points.
Our study is motivated by the recent popularity of repulsive mixtures 
\citep{PeRaDu12, xu2016bayesian,  bianchini2018determinantal, quinlan2017parsimonious, beraha21, cremaschi2023repulsion} 
for Bayesian model-based clustering. 

The first main contribution of the paper is to propose a general construction of RPMs via the normalisation of marked point processes with repulsive or attractive support points, which can be embedded in a hierarchical mixture model, as shown in Section~\ref{sec:mix}.
The second contribution of this paper is to present a unified mathematical theory by encompassing both repulsive and attractive mixtures, as well as the IFPP mixtures by \cite{ArDeInf19}.
Our results also extend the well-known distributional properties of NRMIs \citep{ReLiPr03}.
Due to the generality of our framework, all the proofs rely on new arguments based on Palm calculus; see \cite{BaBlaKa}.
Thirdly, we discuss the use of shot-noise Cox processes, which were not previously considered in connection with Bayesian mixture models beyond \cite{wang2022spatiotemporal}. 
As a further contribution, we include the applicability of the MCMC algorithms proposed in Section~\ref{sec:mix}
to any choice of the point process prior.
However, they might lack efficiency, especially for moderate or high-dimensional data. More adequate algorithms could be devised for specific classes of point processes; see, for instance, \cite{sun2022bayesian} for an alternative MCMC scheme tailored to the class of Mat\'ern point processes.

Our work paves the way for several contributions with both theoretical and computational flavour. We discuss some ideas below.

\subsection{The trade-off between density and cluster estimates, and related asymptotics}\label{sec:tradeoff}

Consider the setting of a well-specified mixture model, i.e., the data come from a finite mixture model whose kernel agrees with our modelling choices. 
Then, for standard mixtures whose atoms are i.i.d. from a base distribution, a well-established theory ensures convergence of the latent mixing measure (in an appropriate sense) to the true data-generating process under suitable conditions. See, e.g., \cite{guha2021posterior}.
In this setting, it is easy to see that the posterior of repulsive mixtures also converges to the true mixing measure under the additional requirement that the true cluster centers are sufficiently separated.
In particular, a minor modification of Theorem~3.1 in \cite{guha2021posterior} ensures that the posterior of $K_n$ is consistent and the mixing measure converges to the true one at an optimal rate, provided that the point process has a density that is bounded from below when evaluated at sufficiently separated points.

A more interesting scenario occurs when the mixture model is misspecified. This brings to light a fundamental trade-off between cluster and density estimates as shown in two simulation studies of Section~\ref{sec:numeric}.
The general asymptotic theory for misspecified models outlined in \cite{Kle(06)}, specialised to the case of mixtures by \cite{guha2021posterior}, ensures that, in the large sample limit, the mixing measure $\ptilde$ contracts to a measure $p^*$ that minimises the Kullback-Leibler divergence between the true data generating density and the support of the prior. As shown in \cite{cai2021finite}, for traditional mixtures with i.i.d. atoms, $p^*$ has infinite support.
We expect that also in the case of SNCP mixtures, the posterior of the number of components diverges, but, thanks to the decoupling of the notion of cluster (i.e., ``grouping'') from that 
of 
allocated mixture component, the SNCP might estimate the correct number of groups of data even in the large sample limit.
Things are less clear when assuming a repulsive point process prior. To the best of our knowledge, the only result in these directions is Corollary 1 by \cite{xie2019bayesian}, which establishes that, under a specific prior, the number of clusters grows sub-linearly with the sample size.
Of course, we expect different priors to yield different asymptotics. Consider, for instance, the case of a hardcore process, for which the probability of having two points closer than a certain radius $r$ is exactly zero. Then, suppose the data generating process is, e.g., a truncated $t$ distribution over a certain interval and $r$ is sufficiently large. In that case, it is reasonable to expect that $K_n = 1$ for any $n$ (since the prior makes it impossible to add more atoms to the mixture).

In a sense, we might argue that repulsive mixtures are the ``safe option'',  meaning that if the model is well-specified, we expect the same posterior inference as with traditional mixtures. In contrast, if the model is misspecified, empirical works suggest that inference is more robust.
The asymptotic study of repulsive mixtures under misspecification presents a ``dual'' challenge with respect to the typical study of mixture models. Indeed, the challenge here is to explicitly characterise the limiting $p^*$, a problem that needs to be more noticed in the current literature.

\subsection{Repulsive priors or more flexible kernels?}

From the discussion above, it is clear that posterior inference in traditional mixture models is troublesome in misspecified regimes.
Several approaches have been proposed to fix this issue, including generalised Bayes frameworks \citep{rigon23} and distance-based clustering \citep{duan2021bayesian, natarajan2023cohesion}.
These offer robust cluster estimates but not density estimates, and obtaining a robust density estimate might be an equally important problem \citep{miller2018robust}.
As shown in Section~\ref{sec:simu2}, repulsive mixtures do offer robust density estimates in addition to cluster estimates. 

Another natural way to address model misspecification issues is to improve the flexibility of mixture kernels. See the recent contributions of \cite{rodriguez2014univariate, paez2018modeling, mukhopadhyay2020estimating}.
The SNCP mixture proposed in this work constitutes another contribution in this direction, whereby we model a mixture kernel via a mixture of Gaussians. The numerical illustrations of Section~\ref{sec:numeric} show that the choice of the prior distribution (i.e., whether it has repulsive, independent, or attractive atoms) must be carried out on a case-by-case basis.
Indeed, the use of an extremely flexible kernel might result in noisy density estimates if the data-generating process is thought to be corrupted. At the same time, a repulsive mixture might produce unsatisfactory density estimates in other settings. Ultimately, it is up to the practitioner to choose the right model for the job. 

\bibliographystyle{chicago}
\bibliography{references}

\newpage
\appendix

\begin{center}
   \LARGE \textbf{ Supplementary materials to:\\
    ``Bayesian Mixtures Models with Repulsive and Attractive Atoms''}
\end{center}



The material in the appendix is organized as follows. 
Appendix~\ref{app:palm} reports details on Palm calculus, while Appendix~\ref{app:lemmata} contains preparatory lemmas related to independently marked point processes. In Appendix~\ref{app:prior} we study the properties of $\tilde{p} \sim \text{nRM}(\plaw_\Phi; H)$, which are useful for prior elicitation. 
Appendix~\ref{app:proofs}  includes proofs of the main results, i.e., posterior characterization (Theorem~\ref{teo:post}), marginal characterization 
(Theorem~\ref{teo:marg}) and predictive distribution 
(Theorem~\ref{teo:pred_conditional}).x
Appendices~\ref{app:poisson}-\ref{app:sncp} 
give details on the specific priors for $\Phi$ we consider, that is the Poisson, Gibbs, determinantal and shot-noise Cox point processes. Finally, Appendix ~\ref{app:numerical_examples} provides further numerical illustrations.

\section{Background on Palm calculus}\label{app:palm}

The basic tool used in our computations is a disintegration of the Campbell measure of a point process $\Phi$ with respect to its mean measure $M_{\Phi}$, usually called the Palm kernel or the family of Palm distributions of $\Phi$.
Below, we recall the main results needed later in this paper. For further details about Palm distributions and Palm calculus, see, e.g., the papers \cite{Kallenberg84, CaMoWa17} 
or the monographs \cite{Kallenberg17} (Chapter 6), \cite{DaVeJo2} (Chapter 13). Here, we adapt the notation from the recent monograph \cite{BaBlaKa} (Chapter 3).

Let $\M(\X)$ the space of bounded measures on $\X$ and denote by $\calM(\X)$ its Borel $\sigma$-algebra.
For a point process $\Phi$ on $\X$, let us denote the mean measure by $M_\Phi$, i.e.,  $M_\Phi(B) := \E[\Phi(B)]$ for all $B \in \calX$.
We define the Campbell measure $\calC_\Phi$ on $\X \times \M(\X)$ as
\[
    \calC_\Phi(B \times L) = \E \left[\Phi(B) \indicator_L(\Phi)\right], \quad B \in \calX, L \in \calM(\X).
\]
Then, as a consequence of the Radon-Nikodym theorem, for any fixed $L$, there exists a $M_\Phi$-a.e. unique disintegration probability kernel $\{\plaw_\Phi^x(\cdot)\}_{x \in \X}$ of $\calC_\Phi$ with respect to $M_\Phi$, i.e.
\[
    \calC_\Phi(B \times L) = \int_B \plaw_\Phi^x(L) M_\Phi(\dd x) \quad B \in \calX, L \in \calM(\X) .
\]
Note that, for any $x \in \X$, $\plaw_\Phi^x$ is the distribution of a random measure (specifically, a point process) on $\X$. See \cite[][Theorem 31.1]{Kallenberg2021}. 
Therefore, $\plaw_\Phi^x$ can be identified with the distribution of a point process $\Phi_x$ such that $\plaw_\Phi^x(L) = \prob(\Phi_x \in L)$. 
In particular, Proposition 3.1.12 in \cite{BaBlaKa} shows that the point process $\Phi_x$ has almost surely
an atom at $x$. This property allows for the interpretation of $\plaw_\Phi^x$ as
the probability distribution of the point process $\Phi$ given that (conditionally to) it
has an atom at $x$.
$\Phi_x$ is called  the Palm version of $\Phi$ at $x$.

\begin{theorem}[Campbell-Little-Mecke formula. Theorem 3.1.9 in \cite{BaBlaKa}]\label{teo:clm}
Let $\Phi$ be a point process on $\X$ such that $M_\Phi$ is $\sigma$-finite. Denote with $\plaw_\Phi(\cdot)$ its law. Let 
$\{\plaw_\Phi^x(\cdot)\}_{x \in \X}$ be a family of Palm distributions of $\Phi$. 
Then, for all measurable $g: \X \times \M(\X) \rightarrow \R_+$, one has
\begin{equation}
 \E \left[ \int_\X g(x, \Phi) \Phi(\dd x) \right] = \int_{\X \times \M(\X)} g(x, \nu) \nu(\dd x) \plaw_\Phi(\dd \nu)  = \int_{\M(\X) \times \X} g(x, \nu) \plaw_\Phi^x(\dd \nu) M_\Phi(\dd x).
\label{eq:clm}
\end{equation}
\end{theorem}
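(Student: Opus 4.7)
The plan is to use the standard ``good functions'' (monotone class) argument, reducing the full identity to the defining property of the Palm kernel.

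For the first equality, nothing beyond the definition of expectation is needed. Identifying the random measure $\Phi$ with its law $\plaw_\Phi$ on $(\M(\X), \calM(\X))$, and using Fubini/Tonelli for non-negative measurable integrands to justify the measurability of $\nu \mapsto \int_\X g(x,\nu)\,\nu(\dd x)$, we obtain
\[
\E \left[ \int_\X g(x, \Phi) \Phi(\dd x) \right] = \int_{\M(\X)} \left( \int_\X g(x, \nu) \nu(\dd x) \right) \plaw_\Phi(\dd \nu).
\]

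For the second, non-trivial equality, my plan is to verify it on a $\pi$-system generating the product $\sigma$-algebra $\calX \otimes \calM(\X)$ and then extend. Take $g(x, \nu) = \indicator_B(x)\, \indicator_L(\nu)$ with $B \in \calX$ and $L \in \calM(\X)$. The left-hand side collapses to
\[
\E[\Phi(B) \indicator_L(\Phi)] = \calC_\Phi(B \times L)
\]
by the definition of the Campbell measure, while the right-hand side is $\int_B \plaw_\Phi^x(L)\, M_\Phi(\dd x)$; these two quantities coincide precisely by the disintegration formula that defines $\plaw_\Phi^x$. Linearity then extends the identity to non-negative finite linear combinations of such product indicators, i.e., to all non-negative simple measurable functions, and monotone convergence extends it further to arbitrary measurable $g: \X \times \M(\X) \to \R_+$.

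The only delicate point, and the main obstacle, is ensuring that the Palm kernel $\{\plaw_\Phi^x\}_{x \in \X}$ exists as a bona fide regular probability kernel. This is exactly where the hypothesis that $M_\Phi$ is $\sigma$-finite is used: $\sigma$-finiteness of $M_\Phi$, combined with the fact that $\plaw_\Phi^x(\M(\X)) = 1$ for $M_\Phi$-a.e.\ $x$, allows one to invoke the existence theorem for regular conditional distributions on Polish spaces (e.g., Theorem 31.1 in \cite{Kallenberg2021}) to obtain a measurable disintegration of $\calC_\Phi$ against $M_\Phi$, unique up to $M_\Phi$-null sets. Once this is in place, the monotone-class extension is routine, and the Campbell--Little--Mecke formula follows.
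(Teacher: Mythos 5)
Your argument is essentially correct, and it is the standard textbook derivation of the Campbell--Little--Mecke formula: the first equality is just the change-of-variables/Fubini step, and the second follows by verifying the identity on product indicators, where it reduces to the disintegration definition of the Palm kernel, and then extending by a monotone-class argument. Note, however, that the paper does not prove this theorem at all; it is stated in Appendix~\ref{sec:app_palm} purely as background and cited from \cite{BaBlaKa} (Theorem 3.1.9), so there is no ``paper proof'' to compare against.

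One small imprecision worth tightening: it is not true that every non-negative simple function on $\X \times \M(\X)$ is a finite non-negative linear combination of indicators of \emph{product} sets $B \times L$ — the level sets of a simple function are arbitrary sets in the product $\sigma$-algebra. The clean way to proceed is to first apply a Dynkin $\pi$--$\lambda$ argument to pass from the $\pi$-system of product rectangles $\{B \times L : B \in \calX,\ L \in \calM(\X)\}$ to all of $\calX \otimes \calM(\X)$, which gives the identity for all indicator functions $\indicator_A$, and only then extend by linearity to simple functions and by monotone convergence to general non-negative measurable $g$. The subtraction step in the $\lambda$-system verification requires finite measure, so one should localize on a $\sigma$-finite exhaustion $\X = \bigcup_n B_n$ with $M_\Phi(B_n) < \infty$ (this is exactly where the $\sigma$-finiteness hypothesis enters again, beyond merely guaranteeing existence of the disintegration) and pass to the limit at the end.
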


In many applications to spatial statistics, formula \eqref{eq:clm}, referred to as CLM formula, is stated in terms of the \emph{reduced} Palm kernel $\plaw_{\Phi^!}^{x}$, i.e., 
\[
\E \left[ \int_\X g(x, \Phi - \delta_x) \Phi(\dd x) \right] = \int_{\M(\X) \times \X} g(x, \nu) \plaw_{\Phi^!}^x(\dd \nu) M_\Phi(\dd x)
\]
where $\Phi - \delta_x$ is obtained by removing the point $x$ from $\Phi$ and $\plaw_{\Phi^!}^x$ is the distribution of the point process 
\[
    \Phi^!_x := \Phi_x - \delta_x.
\]
Hence, given a reduced Palm kernel, we can construct the non-reduced one by considering the distribution of $\Phi^!_x + \delta_x$.

Given a point process $\Phi$ define the $k$-th Campbell measure as the unique measure \citep[Lemma 3.3.1 in][]{BaBlaKa} characterized by
\[
    \calC^k_\Phi(B \times L) = \E \left[ \int_B \indicator_L(\Phi) \Phi^k(\dd \bm x) \right], \quad B \in \calX^{\otimes k}, L \in \calM(\X),
\]
where $\dd \bm x = \dd x_1 \cdots \dd x_k$ and $\Phi^k(\dd \bm x) = \prod_{i=1}^k \Phi(\dd x_i)$. 
Let $M_{\Phi^k}$ be the mean measure of $\Phi^k$, i.e., $M_{\Phi^k}(B) = \calC^k(B \times \M(\X))$, then the $k$--th Palm distribution $\{\plaw_\Phi^{\bm{x}}\}_{\bm{x} \in \X^k}$ is defined as the disintegration kernel of $\calC_\Phi^k$ with respect to $M_{\Phi^k}$, that is
\[
    \calC^k_\Phi(B \times L) = \int_B \plaw_\Phi^{\bm{x}}(L) M_{\Phi^k}(\dd \bm x), \quad B \in \calX^{\otimes k}, L \in \calM(\X) .
\]
The following is a multivariate extension of Theorem~\ref{teo:clm} that will be useful for later computations.
\begin{theorem}[Higher order CLM formula.]\label{teo:clm_mult}
Let $\Phi$ a point process on $\X$ such that $M_{\Phi^k}$ is $\sigma$-finite.  Let 
$\{\plaw_\Phi^{\bm x}(\cdot)\}_{\bm{x} \in \X^k}$ a family of $k$--th Palm distributions of $\Phi$. 
Then, for all measurable functions $g: \X^k \times \M(\X) \rightarrow \R_+$
\begin{equation}
  \E \left[ \int_{\X^k} g(\bm x, \Phi) \Phi^k(\dd \bm x) \right] =  \int_{\M(\X) \times \X^k} g(\bm x, \nu) \plaw_\Phi^{\bm x}(\dd \nu) M_{\Phi^k}(\dd \bm x) .
\label{eq:clm_multi}
\end{equation}
Equivalently, in terms of the \emph{reduced} Palm kernel, we have
\begin{equation}
  \E \left[ \int_{\X^k} g\left(\bm x, \Phi - \sum_{j=1}^k \delta_{x_j} \right) \Phi^{(k)}(\dd \bm x) \right] =  \int_{\M(\X) \times \X^k} g(\bm x, \nu) \plaw_\Phi^{!\bm x}(\dd \nu) M_{\Phi^{(k)}}(\dd \bm x) .
\label{eq:clm_multi_red}
\end{equation}
Moreover, Equations \eqref{eq:clm_multi} and \eqref{eq:clm_multi_red} uniquely characterize the Palm and reduced Palm kernel up to null sets, respectively.
\end{theorem}
Formulation \eqref{eq:clm_multi_red} is often convenient since it involves the \emph{factorial} moment measure. The following distributional identity relates the two formulations
\[
    \Phi^!_{\bm x} \stackrel{d}{=} \Phi_{\bm x} - \sum_{j = 1}^k \delta_{x_j},
\]
that holds for $M_{\Phi^{(k)}}$-almost all $\bm x \in \X^k$.

\section{Preparatory Lemmas}\label{app:lemmata}

We now state some results relating to independently marked point processes.
\begin{lemma}\label{prop:n_mean}
Let $\Psi$ be an independently marked point process on $\X \times \Ss$ obtained by marking a point process $\Phi$ on $\X$ with i.i.d marks $ (S_j)_{j \geq 1}$ from a probability measure $H$, that does not depend on the value of the associated atom $X_j$.
Then, the mean measure $M_{\Psi}$ is
\[
    M_{\Psi}(\dd x \, \dd s) = H(\dd s) M_{\Phi}(\dd x) .
\]
Analogously, if $M_{\Psi^{(n)}}$ is the $n$-th factorial mean measure of $\Psi$ , then it equals
\[
    M_{\Psi^{(n)}}(\dd \bm x  \, \dd \bm s) = H^n(\dd \bm s) M_{\Phi^{(n)}} (\dd \bm x).
\]
\end{lemma}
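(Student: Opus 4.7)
The plan is to compute directly using the definition of an independently marked point process and a conditioning argument based on the tower property.

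For the first claim, consider a rectangle $A \times B \in \calX \otimes \mathcal{S}$. Writing $\Psi = \sum_{j \geq 1} \delta_{(X_j, S_j)}$, one has
\[
    M_\Psi(A \times B) = \E\left[\sum_{j \geq 1} \indicator_A(X_j)\,\indicator_B(S_j)\right].
\]
I would then condition on the $\sigma$-algebra generated by $\Phi$. Because the marks $(S_j)_{j \geq 1}$ are i.i.d. with law $H$ and jointly independent of the atoms of $\Phi$, for each index $j$ we have $\E[\indicator_B(S_j) \mid \Phi] = H(B)$; factoring this out of the sum yields $H(B)\,\E[\sum_j \indicator_A(X_j)] = H(B)\,M_\Phi(A)$. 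This identifies $M_\Psi$ with the product $H \otimes M_\Phi$ on rectangles, and a standard monotone class / $\pi$-$\lambda$ argument extends the identity to the product $\sigma$-algebra.

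For the $n$-th factorial mean measure, I would start from its definition,
\[
    M_{\Psi^{(n)}}(C) = \E\Bigl[\sum_{(j_1, \ldots, j_n)}^{\neq} \indicator_C\bigl((X_{j_1}, S_{j_1}), \ldots, (X_{j_n}, S_{j_n})\bigr)\Bigr],
\]
the sum running over $n$-tuples of pairwise distinct indices. Specializing $C$ to a product rectangle $(A_1 \times B_1) \times \cdots \times (A_n \times B_n)$ and again conditioning on $\Phi$, the key observation is that for every fixed distinct tuple of indices $(j_1, \ldots, j_n)$ the marks $S_{j_1}, \ldots, S_{j_n}$ are i.i.d.\ with law $H$ and independent of $\Phi$, so the conditional expectation of the mark part factorizes as $\prod_{l=1}^n H(B_l)$. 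Pulling this out of the sum leaves exactly $M_{\Phi^{(n)}}(A_1 \times \cdots \times A_n)$, giving $M_{\Psi^{(n)}} = H^n \otimes M_{\Phi^{(n)}}$ on rectangles and hence everywhere by the same extension argument.

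The final clause follows from the fact that $\Phi$ is simple: the factorial and ordinary $n$-th moment measures of a simple point process coincide outside the diagonal $D = \{\bm x \in \X^n : x_i = x_j \text{ for some } i \neq j\}$. When the coordinate sets $A_1, \ldots, A_n$ are pairwise disjoint, the product $A_1 \times \cdots \times A_n$ does not intersect $D$, so $M_{\Phi^{(n)}}$ and $M_{\Phi^n}$ agree there. I do not foresee a real obstacle; the whole argument is essentially Fubini combined with the independent marking property, and the only delicate point is being explicit about the conditioning on $\Phi$ so that the i.i.d.\ structure of the marks can be exploited inside the sum.
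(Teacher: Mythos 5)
Your proof is correct and takes essentially the same route as the paper: both reduce to a Fubini/Tonelli-type argument over rectangles using the fact that the marks are i.i.d.\ from $H$ and independent of $\Phi$, with the paper moving the expectation inside the sum over points and you organizing it by conditioning on $\Phi$ via the tower property (which are the same computation). Your write-up is slightly more careful on a few points the paper leaves implicit, namely the explicit $\pi$-$\lambda$ extension from rectangles to the product $\sigma$-algebra, spelling out the factorial-moment-measure case rather than declaring it analogous, and justifying the final clause by noting that for a simple point process the factorial and ordinary $n$-th moment measures agree off the diagonal.
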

\begin{proof}
Let $C = A \times B$, where $A \in \calX$ $B \in \calS$. We have that:
    \begin{align*}
        \E[\Psi(C)] &= \E \left[ \sum_{j \geq 1} \indicator_{A \times B}(X_j, S_j) \right] =  \E\left[\sum_{j \geq 1 }\indicator_A(X_j)  \indicator_B(S_j)\right] \\
        &= \sum_{j \geq 1 } \E \left[\indicator_A(X_j) \indicator_B(S_j)\right] = \sum_{j \geq 1 } H(B) \E \left[\indicator_A(X_j)\right] \\
        &= H(B) \E \left[ \sum_{j \geq 1} \indicator_A(X_j) \right] = H(B) M_\Phi(B). 
    \end{align*}    
The proof for the $n$-th factorial moment measure is achieved following the same steps with $A \in \calX^{\otimes n}$ and $B \in \calS^{\otimes n}$.
\end{proof}

\begin{lemma}\label{prop:marked_palm}
Let $\Psi$ be as in \Cref{prop:n_mean}, then the Palm distribution $\{\plaw_\Psi^{x, s}\}_{(x, s) \in \X \times \Ss}$ is the distribution of the point process $\delta_{(x, s)} + \Psi^!_{x, s}$, where $\Psi^!_{x, s}$ is an independently marked point process obtained by marking $\Phi^!_x \sim \plaw_{\Phi^!}^x$ with i.i.d marks from $H$.
Similarly, let $(\bm{x}, \bm{s}) = (x_1, \ldots, x_n, s_1, \ldots, s_n)$, the Palm distribution $\{\plaw_\Psi^{\bm x, \bm s}\}_{(\bm{x}, \bm{s}) \in \X^{n} \times \Ss^{n}}$ is the distribution of the point process $\sum_{i=1}^n \delta_{(x_i, s_i)} +  \Psi^!_{\bm x, \bm s}$, where $\Psi^!_{\bm x, \bm s}$ is an independently marked point process obtained by marking $\Phi^!_{\bm x} \sim \plaw_\Phi^{\bm x}$ with i.i.d. marks from $H$.
\end{lemma}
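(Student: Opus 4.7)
The plan is to verify the claim by checking the defining disintegration property of the Palm kernel through the Campbell-Little-Mecke formula (Theorem \ref{teo:clm}) and invoking its uniqueness up to $M_\Psi$-null sets. Let $\plaw_\Psi^{x,s}$ denote the candidate kernel, i.e.\ the law of $\delta_{(x,s)} + \Psi^!_{x,s}$, where $\Psi^!_{x,s}$ is the independent marking of $\Phi^!_x$ with i.i.d.\ marks from $H$. I must show that for every bounded measurable $g:\X \times \Ss \times \M(\X \times \Ss) \to \R_+$,
\[
\E\!\left[\int g(x,s,\Psi)\,\Psi(\dd x\,\dd s)\right] \;=\; \int g(x,s,\nu)\,\plaw_\Psi^{x,s}(\dd\nu)\,M_\Psi(\dd x\,\dd s).
\]

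First, writing $\Psi=\sum_j \delta_{(X_j,S_j)}$, the left-hand side equals $\E[\sum_j g(X_j,S_j,\Psi)]$. Conditioning on $\Phi$ and on the marks $(S_i)_{i\neq j}$, and using that $S_j\sim H$ is independent of these,
\[
\E\!\left[g(X_j,S_j,\Psi)\right] \;=\; \E\!\left[\int g\!\left(X_j, s,\, \Psi^{(-j)}+\delta_{(X_j,s)}\right) H(\dd s)\right],
\]
where $\Psi^{(-j)}=\sum_{i\neq j}\delta_{(X_i,S_i)}$ is, conditionally on $\Phi-\delta_{X_j}$, the independent $H$-marking of $\Phi-\delta_{X_j}$. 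Define
\[
\tilde h(x,\phi) \;:=\; \int \E\!\left[ g\!\left(x, s,\, \M(\phi) + \delta_{(x,s)}\right) \right] H(\dd s),
\]
where $\M(\phi)$ denotes an independent $H$-marking of the configuration $\phi\in\M(\X)$; by independence of marking from $\Phi$, summing over $j$ yields
\[
\E\!\left[\sum_j g(X_j,S_j,\Psi)\right] \;=\; \E\!\left[\int_\X \tilde h(x,\, \Phi-\delta_x)\,\Phi(\dd x)\right].
\]

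Now apply the CLM formula of Theorem \ref{teo:clm} to the underlying point process $\Phi$, passing to its reduced Palm kernel $\plaw_{\Phi^!}^{x}$. This gives
\[
\E\!\left[\int \tilde h(x,\Phi-\delta_x)\,\Phi(\dd x)\right] \;=\; \int_\X \int \tilde h(x,\phi)\,\plaw_{\Phi^!}^{x}(\dd\phi)\, M_\Phi(\dd x).
\]
Unfolding the definition of $\tilde h$ and recognizing $\M(\Phi^!_x)$ as a random measure $\Psi^!_{x,s}$ whose law does not depend on $s$, this equals
\[
\int_\X \int_\Ss \int g(x,s,\nu+\delta_{(x,s)})\,\plaw_{\Psi^!}^{x,s}(\dd\nu)\,H(\dd s)\,M_\Phi(\dd x).
\]
Invoking Lemma \ref{prop:n_mean}, which gives $M_\Psi(\dd x\,\dd s)=H(\dd s)M_\Phi(\dd x)$, and identifying $\plaw_\Psi^{x,s}$ as the pushforward of $\plaw_{\Psi^!}^{x,s}$ under $\nu\mapsto\nu+\delta_{(x,s)}$, I obtain precisely the CLM identity characterizing the claimed Palm kernel. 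By the $M_\Psi$-a.e.\ uniqueness of the disintegration, this identifies $\plaw_\Psi^{x,s}$.

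For the multivariate statement, the same strategy is applied with Theorem \ref{teo:clm_mult} in place of Theorem \ref{teo:clm}: I expand $\Psi^k(\dd \bm x\,\dd \bm s)$ as a sum over ordered $k$-tuples of distinct points, integrate out the selected marks $(S_{j_1},\ldots,S_{j_k})$ using their independence, and apply the $k$-th order CLM formula for $\Phi$ to pull out the reduced Palm kernel $\plaw_{\Phi^!}^{\bm x}$. Combined with the $k$-th factorial mean measure identity $M_{\Psi^{(k)}}(\dd\bm x\,\dd\bm s)=H^k(\dd\bm s)M_{\Phi^{(k)}}(\dd\bm x)$ from Lemma \ref{prop:n_mean}, this yields the claimed identification of $\plaw_\Psi^{\bm x,\bm s}$ as the law of $\sum_{i=1}^k \delta_{(x_i,s_i)} + \Psi^!_{\bm x,\bm s}$ with $\Psi^!_{\bm x,\bm s}$ obtained by i.i.d.\ $H$-marking of $\Phi^!_{\bm x}$. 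The main technical care is needed in the conditioning step: one must argue rigorously that, given $\Phi-\delta_{X_j}$ (or its $k$-fold analogue), the residual marked configuration is distributionally the same as an independent $H$-marking of that reduced configuration. This follows from a measurable enumeration of the atoms together with the i.i.d.\ and $\Phi$-independent nature of the marks, but is the only subtle point; the rest is bookkeeping via Fubini and the CLM formulas.
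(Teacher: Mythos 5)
Your proof is correct and follows essentially the same route as the paper's: both verify the defining CLM/disintegration identity by factoring out the selected mark via its independence and $H$-distribution, applying the CLM formula for the unmarked process $\Phi$ with its reduced Palm kernel, and invoking $M_\Psi(\dd x\,\dd s)=H(\dd s)M_\Phi(\dd x)$ together with uniqueness of the disintegration. The only difference is the direction of the chain of equalities — you start from $\E[\int g(x,s,\Psi)\Psi(\dd x\,\dd s)]$ and work toward the candidate kernel, whereas the paper starts from the candidate kernel and unwinds to the expectation side.
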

\begin{proof}
By the CLM formula for the reduced Palm kernel, we know that $\Psi^!_{x, s}$ satisfies
\begin{equation*}\label{eq:clm_marked_red}
    \E \left[ \int_{\X \times \Ss} g(x, s, \Psi - \delta_{(x, s)}) \Psi(\dd x \, \dd s) \right] = \int_{\X \times \Ss} \E \left[ g(x, s, \Psi^!_{x, s}) \right] M_\Psi(\dd x \, \dd s) .
\end{equation*}
Consider now the point process $\Psi^\prime_{x, s}$ obtained by marking $\Phi^!_x$ with i.i.d marks from $H$, if
\begin{equation}\label{eq:ans}
     \int_{\X \times \Ss} \E \left[ g(x, s, \Psi^\prime_{x, s}) \right] M_\Psi(\dd x \, \dd s) = \int_{\X \times \Ss} \E \left[ g(x, s, \Psi^!_{x, s}) \right] M_\Psi(\dd x \, \dd s),
\end{equation}
for any $g$, we can conclude that $\Psi^\prime_{x, s}$ and $\Psi^!_{x, s}$ are equal in distribution.
To prove \eqref{eq:ans}, we will show that
\[
  \int_{\X \times \Ss} \E \left[ g(x, s, \Psi^\prime_{x, s}) \right] M_\Psi(\dd x \, \dd s) = \E \left[ \int_{\X \times \Ss} g(x, s, \Psi - \delta_{x, s}) \Psi(\dd x \, \dd s) \right] .
\]
In the following, we write $E_V[f(x, z)]$ to indicate that the expectation is taken with respect to the generic random variable $V$.
Write $\Psi^\prime = (\Phi^!, \bm m)$ where $\bm m$ is the collection of marks. With a slight abuse of notation, we write $g(x, s, \Psi^\prime_{x, s}) = g(x, s, \Phi^!_x, \bm m)$. Then
\begin{align*}
    \int_{\X \times \Ss} & \E_{\Psi} \left[ g(x, s, \Psi^\prime_{x, s}) \right] M_\Psi(\dd x \, \dd s) = \int_{\X \times \Ss} \E_{\Phi^!_x, \bm m} \left[ g(x, s, \Phi^!_x, \bm m) \right] M_\Psi(\dd x \, \dd s) \\
    &= \int_{\X \times \Ss} \E_{\Phi^!_x} \left[ \E_{\bm m} \left[ g(x, s, \Phi^!_x, \bm m) \mid \Phi^!_x \right]\right] M_\Psi(\dd x, \dd s) \\
    &=  \int_{\X \times \Ss} \E_{\Phi^!_x} \left[ \int_{\Ss^{n^!}} g(x, s, \Phi^!_x, \bm m) \prod_{i: x_i \in \Phi^!_x} H(\dd m_i) \right] M_{\Phi}(\dd x) H(\dd s)
\end{align*}
where $n^!$ denotes the cardinality of $\Phi^!_x$. Denoting the cardinality of $\Phi$ with $n$, $n^! = n -1$, by Fubini's theorem, we can interchange the outermost integral over $\Ss$ with $\E_{\Phi^!_x}$. Then, we apply the CLM formula (in reversed order) with respect to $\Phi^!_x$, thus obtaining
\begin{multline*}
    \int_{\X \times \Ss} \E_{\Psi} \left[ g(x, s, \Psi^\prime_{x, s}) \right] M_\Psi(\dd x, \dd s) \\= \E_\Phi \left[ \int_{\X} \int_{\Ss^{n - 1 + 1}} g(x, s, \Phi_x - \delta_x, \bm m) \prod_{i: x_i \in \Phi_x - \delta_x} H(\dd m_i) H(\dd s) \Phi(\dd x) \right] .
\end{multline*}
We now observe that $(\Phi - \delta_x, \bm m) = \Psi - \delta_{(x, s)}$ and $\Psi(\dd x \, \dd s) = H(\dd s) \Phi(\dd x)$, as a consequence the right-hand side of the equation above equals
\begin{align*}
  \E & \left[ \int_{\X \times \Ss} \int_{\Ss^{n-1}} g(x, s, \Psi - \delta_{(x, s)}) \prod_{i: x_i \in \Phi_x - \delta_x} H(\dd m_i) \Psi(\dd x \, \dd s) \right]   \\
  &= \E \left[ \int_{\Ss^{n-1}}  \int_{\X \times \Ss} g(x, s, \Psi - \delta_{(x, s)}) \prod_{i: x_i \in \Phi_x - \delta_x} H(\dd m_i) \Psi(\dd x \, \dd s) \right] \\
  &= \E \left[ \int_{\X \times \Ss} g(x, s, \Psi - \delta_{(x, s)}) \Psi(\dd x \, \dd s) \right] 
\end{align*}
which proves \eqref{eq:ans} and the results follows. Similar considerations hold true to prove the second part of the lemma, which involves the distribution of the $n$-th Palm measure.  
\end{proof}

\begin{lemma}\label{lemma:laplace_marked}
    Let $\tilde \mu(A) = \int_{A \times \R_+} s \Psi(\dd x \,\dd s)$ where $\Psi = \sum_{j \geq 1} \delta_{( X_j , S_j)}$ is a marked point process obtained by marking $\Phi = \sum_{j \geq 1 } \delta_{X_j}$ with i.i.d. marks $S_j$ from a distribution $H$ on $\R_+$. Then for any measurable function $f : \X \to \R_+$:
    \[
        \E\left[ \e^{- \int_X f(x) \mutilde(\dd x)} \right] = \E\left[ \exp\left(\int_\X \log \psi(f(x)) \Phi(\dd x) \right) \right]
    \]
    where $\psi(f(x)): = \int_{\R_+} \e^{- s f(x)} H(\dd s)$ is the Laplace transform of $H$ evaluated at $f(x)$.
\end{lemma}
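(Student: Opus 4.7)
The plan is to rewrite $\int_\X f(x)\,\tilde\mu(\dd x)$ as a sum over the points of $\Psi$ and then condition on the underlying point process $\Phi$ to exploit the conditional independence of the marks. Concretely, by the definition of $\tilde\mu$ and the representation $\Psi = \sum_{j\geq 1}\delta_{(X_j,S_j)}$, we have
\[
\int_\X f(x)\,\tilde\mu(\dd x) \;=\; \int_{\X\times\R_+} s\,f(x)\,\Psi(\dd x\,\dd s) \;=\; \sum_{j\geq 1} f(X_j)\,S_j.
\]
Since $\Psi$ is an independently marked point process, conditionally on the atoms $\Phi = \sum_{j\geq 1}\delta_{X_j}$ the marks $(S_j)_{j\geq 1}$ are i.i.d. with law $H$, independent of the positions. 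Hence, by conditional independence,
\[
\E\!\left[ e^{-\sum_j f(X_j)S_j} \,\Big|\, \Phi \right] \;=\; \prod_{j\geq 1} \E\!\left[ e^{-f(X_j)S_j} \,\big|\, X_j \right] \;=\; \prod_{j\geq 1} \psi(f(X_j)),
\]
where in the last step I used the definition $\psi(t) = \int_{\R_+} e^{-st} H(\dd s)$.

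To conclude, I would rewrite the product as an exponential of an integral against $\Phi$: since $\Phi = \sum_{j}\delta_{X_j}$, we get
\[
\prod_{j\geq 1} \psi(f(X_j)) \;=\; \exp\!\left(\sum_{j\geq 1} \log \psi(f(X_j))\right) \;=\; \exp\!\left(\int_\X \log\psi(f(x))\,\Phi(\dd x)\right).
\]
Taking expectations with respect to $\Phi$ and applying the tower property yields the claim. The only mild technical points are: (i) justifying that $\Phi$ has finitely many atoms (which is given by our standing assumption that $\Phi$ is a finite point process, so the sum above and the product are almost surely finite), and (ii) justifying the interchange of conditional expectation with the product, which follows from Fubini/Tonelli since all integrands are nonnegative. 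There is no real obstacle: this lemma is essentially the marking theorem applied to the Laplace functional, and the proof is a single conditioning argument.
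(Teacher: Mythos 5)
Your proof is correct and follows exactly the same route as the paper's: write $\int_\X f\,\dd\tilde\mu = \sum_j f(X_j)S_j$, condition on $\Phi$ to factor the inner expectation using the conditional independence of the marks, identify each factor as $\psi(f(X_j))$, rewrite the product as $\exp\bigl(\int_\X \log\psi(f(x))\,\Phi(\dd x)\bigr)$, and apply the tower property. No substantive difference.
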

\begin{proof}
By exploiting the tower property of expected values, the Laplace functional equals:
    \begin{align*}
    \E   \left[ \exp \left\{ - \int_\X f(x) \mutilde(\dd x)\right\}\right] & = \E \left[ \exp  \left\{ - \sum_{j \geq 1 }
    S_j  f (X_j) \right\} \right]\\
    &  = \E \left[  \E \left[ \exp \left\{ - \sum_{j \geq 1}
    S_j  f (X_j) 
   \right\}  \Big|   \Phi  \right]  \right]\\
    &  = \E \left[  \prod_{j \geq 1 }\E \left[ \exp \left\{ - 
    S_j  f (X_j )
   \right\}  \Big|   \Phi \right]  \right]\\
     &  = \E \left[  \prod_{j \geq 1 }
     \int_{\R_+}  \e^{- s f (X_j) } H (\dd s)\right] \\
     &= \E \left[  \exp \left\{ \sum_{j \geq 1} \log \left(
     \int_{\R_+}  \e^{- s f (X_j) } H (\dd s) \right) \right\}\right] \\
     &= \E\left[ \exp\left(\int_\X \log \psi(f(x)) \Phi(\dd x) \right) \right]
\end{align*}
and the result follows.
\end{proof}

The following Lemma provides a convenient way of representing the  Palm kernel of the superposition of independent point processes. It will be useful in Appendix \ref{app:sncp}.

\begin{lemma}[Theorem 1 in \citealp{BerCamPalm}]\label{lem:superp_palm}
    Let $\Phi_1$ and $\Phi_2$ be two simple and independent point processes. Then the  Palm kernel $(\Phi_1 + \Phi_2)_{x}$ can be expressed as a mixture, i.e.
    \[
        (\Phi_1 + \Phi_2)_{x} \deq \begin{cases}
            \Phi_{1 x} + \Phi_2 & \text{with probability } \frac{M_{\Phi_1}(\dd x)}{M_{\Phi_1}(\dd x) + M_{\Phi_2}(\dd x)} \\
            \Phi_{1} + \Phi_{2 x} & \text{with probability } \frac{M_{\Phi_2}(\dd x)}{M_{\Phi_1}(\dd x) + M_{\Phi_2}(\dd x)}
        \end{cases}
    \]
\end{lemma}

\begin{remark}
    Since $\Phi_1$ and $\Phi_2$ are simple, also $\Phi$ is. Then, exploiting the relation $\Phi_x \deq \Phi_x^! + \delta_x$, we obtain an equivalent statement of Lemma \ref{lem:superp_palm} in terms of the reduced Palm kernel, namely:
    \[
        (\Phi_1 + \Phi_2)^!_{x} \deq \begin{cases}
            \Phi^!_{1 x} + \Phi_2 & \text{with probability } \frac{M_{\Phi_1}(\dd x)}{M_{\Phi_1}(\dd x) + M_{\Phi_2}(\dd x)} \\
            \Phi_{1} + \Phi^!_{2 x} & \text{with probability } \frac{M_{\Phi_2}(\dd x)}{M_{\Phi_1}(\dd x) + M_{\Phi_2}(\dd x)}
        \end{cases}
    \]
\end{remark}
Lemma \ref{lem:superp_palm} can be extended to deal with more than two independent processes by induction.
\begin{lemma}[Corollary 1 in \cite{BerCamPalm}]\label{lem:superp_palm2}
    Let $\Phi_1, \ldots, \Phi_k$ be independent simple point processes, then
    \[
        \left(\sum_{j=1}^k \Phi_j\right)_x \deq \Phi_{j x} + \sum_{\ell \neq j} \Phi_\ell \quad \text{with probability proportional to } M_{\Phi_j}(\dd x).
    \]
    Moreover, the same results hold true for the corresponding reduced Palm kernels.
\end{lemma}

\section{Prior analysis}\label{app:prior}

We study here the properties of $\tilde{p} \sim {\rm nRM} (\plaw_\Phi; H)$, useful for prior elicitation. In particular, here 
 we derive expressions for the expectations $\E[\mutilde(A)]$, $\E[\tilde{p}(A)]$, and the covariance $\mbox{Cov}(\mutilde(A), \mutilde(B))$.

\begin{proposition}\label{prop:prior_moms}
  Let $\tilde \mu \sim {\rm RM}(\plaw_{\Phi}; H)$ and $\E [S] := \int_{\R_+} s H(\dd s)$ be the expected value of a random variable $ S\sim H$.  We have:
  \begin{itemize}
      \item[(i)] for any measurable set $A$, $\E[\mutilde(A)] = M_{\Phi}(A) \E[S]$, and 
      \[    
        \E[\ptilde(A)] = \int_{\R_+} \kappa(u, 1)  \int_A \E\left[\e^{- u \mutilde_x^! (\X)} \right]  M_{\Phi}(\dd x) \dd u
      \]
      where $\kappa(u, n):= \E[\e^{-uS} S^n]$ and $\mutilde_x^!$ is as in \eqref{eq:palm_mu};
      \item[(ii)] for arbitrary measurable sets $A,B \in \mathcal{X}$
      \[
        \Cov(\mutilde(A),\mutilde(B)) = \E[S^2] M_{\Phi}(A \cap B) + \E[S]^2 \left(M_{\Phi^{(2)}}(A \times B) - M_{\Phi}(A) M_{\Phi}(B)\right);
      \]
      
      \item[(iii)] the mixed moments of $\tilde p$ are given by
      \begin{multline*}
       \E[\ptilde(A) \ptilde(B)] =  \int_{\R_+} u \Bigg\{ \kappa(u, 1)^2 \int_{A \times B} \E\left[\e^{-u \mutilde^!_{\bm x}  (\X)} \right] M_{\Phi^{(2)}}(\dd \bm x)  \\ +  \kappa(u, 2) \int_{A \cap B}\E\left[\e^{-u \mutilde^!_{x} (\X)} \right] M_{\Phi}(\dd x) \Bigg\} \dd u
      \end{multline*}
      where $\bm x = (x_1, x_2)$ and $A,B \in \mathcal{X}$.
  \end{itemize}
\end{proposition}
We now specialise the previous proposition to examples of interest.
\begin{proof}
    We prove the three parts of the proposition separately.

\begin{enumerate}
    \item[(i)] For any Borel set $A$, one has:
    \begin{multline*}
        \E[\mutilde(A)] = \E \left[ \int_{A \times \R_+} s \Psi(\dd x \, \dd s) \right] \\ = \int_{A \times \R_+} \int_{\mathbb M(A \times \R_+)} s \plaw_{\Psi}^{x, s} (\dd \nu) M_{\Psi}(\dd x \, \dd s) = \int_{A \times \R_+} s H(\dd s) M_\Phi(\dd x)
    \end{multline*}
    where the second equality follows from Theorem~\ref{teo:clm} with $g(x, s, \Psi) = s$, while the third follows from Lemma~\ref{prop:n_mean}.

    Recalling the definition of $\ptilde$ and exploiting the identity $x^{-1} = \int_0^\infty \e^{-u x} \dd u$ we have
    \begin{multline*}
      \E[\tilde p (A)]  =  \E \left[  \frac{\tilde \mu (A)}{\tilde \mu (\X)} \indicator\left[\tilde \mu (\X) >0\right]\right] \\=  
      \E\left[ \indicator\left[\tilde \mu (\X ) >0\right]\int_{\R_+} \dd u \int_{A \times \R_+} \exp\left( - \int_{\X \times \R_+} u t \Psi(\dd z \, \dd t) \right) s \Psi(\dd x \, \dd s)\right].
  \end{multline*}
  We can further exchange the outermost expectation with the integral with respect to $\dd u$ by the Fubini theorem. Then, we apply the Campbell-Little-Mecke formula to get
  \begin{equation} \label{eq:EP}
    E[\tilde p (A)]= \int_{\R_+} \int_{A \times \R_+} \e^{-us} s \times\E\left[
     \indicator\left[s + \tilde \mu^{!}_x (\X) >0\right] \e^{- \int_{\X \times \R_+} u v \Psi^!_{x, s} (\dd z \, \dd v)} \right]  M_{\Phi}(\dd x) H(\dd s)   \dd u
  \end{equation}
  where $\Psi^!_{x, s}$ is the reduced Palm kernel of $\Psi$ and we recall that
  \[
  \tilde \mu^{!}_x (A) : = \int_{A \times \R_+}  t  \Psi^!_{x, s} (\dd x \, \dd t).
  \]
  We observe that the indicator function appearing in \eqref{eq:EP} equals $1$ almost surely,  since $H(0)=0$, being $H$ the probability distribution of the jumps. 
  
  The same argument applies in all our proofs; thus, the fact that  $\prob(\tilde \mu (\X ) =0 ) >0$  is immaterial.

\item[(ii)] We can evaluate the covariance as follows
    \[
        \Cov(\mutilde(A), \mutilde(B)) = \E[\mutilde(A) \mutilde(B)] - \E[\mutilde(A)] \E[\mutilde(B)].
    \]
    Focusing on the first term, we get
    \begin{align*}
        \E[\mutilde(A) \mutilde(B)] = \E \left[ \int_{A \times \R_+} \int_{B \times \R_+} s t \Psi(\dd x \, \dd s) \Psi(\dd z \, \dd t)\right].
    \end{align*}
    Let $\bm s = (s, t)$ and $\bm x = (x, z)$, so that
    \[
        \E[\mutilde(A) \mutilde(B)] = \E \left[ \int_{(\X \times \R_+)^2} \indicator_{A \times B}(\bm x) s t \Psi^2(\dd \bm x \, \dd \bm s) \right].
    \]    
    Now, let $\Theta = \X \times \R_+$ and define $\mathcal D_{\Theta^2} = \{(\theta_1, \theta_2) \in \Theta^2 \text{ s.t. } \theta_1 = \theta_2\}$. An application of Theorem~\ref{teo:clm_mult} with $g(\bm x, \bm s, \Psi) = \indicator_{A \times B}(\bm x) s t$ yields 
    \begin{align*}
        \E[\mutilde(A) \mutilde(B)]  &= \int_{\Theta^2} \indicator_{A \times B}(\bm x) s t M_{\Psi^2}(\dd \bm x \, \dd \bm s) \\
        &= \int_{\mathcal D_{\Theta^2}} \indicator_{A \cap B}(x) s^2  M_{\Psi}(\dd s \, \dd x) + \int_{\Theta^2 \setminus \mathcal D_{\Theta^2}} \indicator_{A \times B}(\bm x) s t  M_{\Psi^{(2)}}(\dd \bm{s} \, \dd \bm x)
    \end{align*}
   \MBtext{where the last equality is a consequence of the following relation between the second order moment measure of $\Psi$ and the corresponding factorial moment measures
    \[
    M_{\Psi^2}(\dd \bm x \, \dd \bm s) = M_{\Psi}(\dd x, \dd s) \delta_{(x, s)}(\dd z, \dd t) + M_{\Psi^{(2)}}(\dd \bm x \, \dd \bm s) \indicator[x \neq z, s \neq t],
    \]
    as can be proved by a simple application of  \citep[Lemma 14.E.4]{BaBlaKa}. Now, we can exploit 
     \Cref{prop:n_mean} to show that
     \[
   \E[\mutilde(A) \mutilde(B)] = \int_{\mathcal D_{\Theta^2}} \indicator_{A \cap B}(x) s^2 H(\dd s) M_{\Phi}(\dd x) + \int_{\Theta^2 \setminus \mathcal D_{\Theta^2}} \indicator_{A \times B}(\bm x) s t H(\dd s) H(\dd t) M_{\Phi^{(2)}}(\dd \bm x) .
     \]
    The conclusion follows from standard algebra.
    }

\item[(iii)]
To compute $\E[\ptilde(A) \ptilde(B)]$ we can proceed as above and write
\begin{align*}
    \E[\ptilde(A) \ptilde(B)] &= \E \left[\frac{1}{T^2} \mutilde(A) \mutilde(B) \right] \\
    & = \int_{\R_+} \E \left[ \int_{\Theta^2}u \e^{- \int u t \Psi(\dd z \, \dd t)} s_1 s_2  \indicator_{A \times B}(x_1, x_2) \Psi^2( \dd \bm x \, \dd \bm s)  \right] \dd u .
\end{align*}
Again, from the relation between moment and factorial moment measures, we write
\begin{align*}
    \E[\ptilde(A) \ptilde(B)] &= \int_{\R_+} u \Bigg\{ \E\left[\int_{\Theta^2 \setminus \mathcal D_{\Theta^2}} \e^{-u \int t \Psi(\dd z \, \dd t)} s_1 s_2 \indicator_{A \times B}(x_1, x_2) \Psi^{(2)}(\dd \bm x \, \dd \bm s)    \right] \\
    & \qquad \qquad  + \E \left[ \int_{\mathcal D_{\Theta^2}} \e^{-u  \int t \Psi(\dd z \, \dd t)} s_1^2 \indicator_{A \cap B}(x) \Psi(\dd x \, \dd s) \right] \Bigg \} \dd u \\
    & = \int_{\R_+} u \Bigg\{ \int_{\Theta^2 \setminus \mathcal D_{\Theta^2}} \E[\e^{-u \mutilde^!_{\bm x}(\X)}] \e^{-u(s_1 + s_2)} s_1 s_2 \indicator_{A \times B}(x_1, x_2) H(\dd s_1) H(\dd s_2) M_{\Phi^{(2)}}(\dd \bm x)  \\
    & \qquad \qquad  +  \int_{\mathcal D_{\Theta^2}} \E[\e^{-u \mutilde^!_{x} (\X)}] \e^{-us} s^2 \indicator_{A \cap B}(x) H(\dd s) M_{\Phi}(\dd x)\Bigg\} 
\end{align*}
and the proof follows.
\end{enumerate}
\end{proof}

\begin{example}[Determinantal point process]
If $\Phi$ is a DPP, the first moment measure is equal to $M_\Phi(A) = \int_{A} K(x, x) \omega(\dd x)$.
Moreover, the second factorial moment measure of $\Phi$ is given by
\[
    M_{\Phi^{(2)}}(A \times B) = \int_{A \times B} \det \{K(z_1, z_2)\}_{z_1, z_2 \in \{x_1, x_2\}} \omega(\dd x_1) \omega(\dd x_2).
\]
When $K(x, y) = \rho \e^{- \|x - y\|^2 / \alpha}$ for $\rho,\alpha>0$, $\X$ is a compact subset of $\R^q$ and $\omega$ is the Lebesgue measure,   we recover the Gaussian-DPP. In this case $M_\Phi(A) = \rho \times |A|$, where $|\cdot|$ is the Lebesgue measure of a set, and 
\[
     M_{\Phi^{(2)}}(A \times B) = \rho^2\int_{A \times B} 1 - \e^{- 2 \|x - y\|^2 / \alpha} \dd x  \dd y.
\]
\end{example}

\begin{example}[Shot-Noise Cox Process]
    It is easy to see that $M_\Phi(A) = \gamma \int_A \int_\X k_\alpha(x - v) \omega(\dd v) \dd x$.
    Moreover, by putting
\begin{equation*}\label{eq:eta_cox}
    \eta(x_1, \ldots, x_{l}) = \int \prod_{i=1}^l k_{\alpha}(x_i - v) \omega(\dd v)
\end{equation*}
and using \Cref{lemma:moment_cox} in Appendix \ref{app:sncp}, we specialize expressions in \Cref{prop:prior_moms} as follows
\begin{align*}
    \E[\mutilde(A)] &= \E[S] \gamma \int_A \eta(x) \dd x \\
    \Cov(\mutilde(A), \mutilde(B)) &= \E[S^2] \gamma \int_{A \cap B} \eta(x) \dd x \  \\ 
    & \qquad + \E[S]^2 \gamma^2 \left\{ \int_{A \times B} \eta(x, y) \dd x \, \dd y   - \int_{A} \eta(x) \dd x  \int_{ B} \eta(x) \dd x \right\}.
\end{align*} 

\end{example}

\section{Proofs of the main results}
\label{app:proofs}

\subsection{Proof of Theorem~\ref{teo:post}}

We consider the Laplace functional of $\mutilde$ given $\bm Y = \bm y$ and $U_n = u$:
\[
	\E \left[ \e^{-\int_\X f(z) \mutilde(\dd z)} \midd \bm Y = \bm y, U_n = u \right] = 
	\frac{\E \left[ \e^{-\int_\X f(z) \mutilde(\dd z)} \prob(\bm Y \in \dd \bm y, U_n \in \dd u \mid \mutilde)  \right]}{\E \left[ \prob(\bm Y \in \dd \bm y, U_n \in \dd u \mid \mutilde)  \right] }
\]
where we observe that the denominator is obtained as a special case of the numerator by letting $f(x) = 0$.
\MBtext{The equality formally follows by the \textit{abstract Bayes' theorem}, see, for instance,  Proposition B.41 in \cite{bjork2009arbitrage}. In particular, in Bj{\"o}rk's notation, $Q$ is the posterior of $\mutilde$, $P$ is the prior of $\mutilde$, $L$ is $\prob(\bm Y \in \dd \bm y, U_n \in \dd u \mid \mutilde)$ and $\mathcal G = \{\Omega, \emptyset\}$.
Hence,}
we now focus on the numerator:
\begin{align*}
	& \E \left[ \e^{-\int_\X f(z) \mutilde(\dd z)} \prob(\bm Y \in \dd \bm y, U_n \in \dd u \mid \mutilde) \right] = \E \left[ \e^{-\int_\X f(z) \mutilde(\dd z)} \frac{u^{n-1}}{\Gamma(n)} \prod_{j=1}^k \mutilde(\dd y^*_j)^{n_j} \e^{-Tu} \right]\\
	&= \frac{u^{n-1}}{\Gamma(n)} \E \left[ \e^{-\int_\X (f(z) + u) \mutilde(\dd z)} \prod_{j=1}^k \mutilde(\dd y^*_j)^{n_j}  \right] \\
	&= \frac{u^{n-1}}{\Gamma(n)} \E \left[ \int_{(\X \times \R_+)^k} \e^{-\int_\X (f(z) + u) \mutilde(\dd z)} \prod_{j=1}^k s_j^{n_j} \delta_{x_j}(\dd y^*_j) \Psi(\dd x_j \, \dd s_j)  \right].
\end{align*}
Note that, for the sake of notational simplicity, we have omitted the indicator function $\indicator[\mutilde(\mathbb X) > 0]$ since this is effectively immaterial as discussed below \eqref{eq:EP}.
We set
\[
    g(\bm x, \bm s, \Psi) = \e^{-\int_\X (f(z) + u) \mutilde(\dd z)} \prod_{j=1}^k s_j^{n_j} \delta_{x_j}(\dd y^*_j),
\]
and by an application of Theorem~\ref{teo:clm_mult},  we obtain that
\begin{equation}\label{eq:piaceafederico}
    \E \left[ \e^{-\int_\X f(z) \mutilde(\dd z)} \prob(\bm Y \in \dd \bm y, U_n \in \dd u \mid \mutilde) \right]  = \frac{u^{n-1}}{\Gamma(n)} \int_{(\X \times \R_+)^k} \E_{\Psi \sim \plaw_{\Psi}^{\bm x, \bm s}} \left[g(\bm x, \bm s, \Psi)\right] M_{\Phi^{(k)}}(\dd \bm x) H^{k}(\dd \bm s)
\end{equation}
where we have used $M_{\Psi^k}(\dd \bm x \, \dd \bm s) =  M_{\Psi^{(k)}}(\dd \bm x \, \dd \bm s)$ (i.e., we pass from the moment measure to the factorial one since the $x_j$'s can be considered pairwise distinct thanks to the term $\prod_{j=1}^k \delta_{x_j}(\dd y^*_j)$), and that $M_{\Psi^{(k)}}(\dd \bm x\, \dd \bm s) = M_{\Phi^{(k)}}(\dd \bm x) H^{k}(\dd \bm s)$ as in Lemma \ref{prop:n_mean}.

By the properties of the Palm distribution, $\Psi \sim \plaw_{\Psi}^{\bm x, \bm s}$ has the same law of $\sum_{j=1}^k \delta_{(x_j, s_j)} + \Psi^!_{\bm{x}, \bm{s}}$. Moreover, following Proposition \ref{prop:marked_palm}, $\Psi^!_{\bm{x}, \bm{s}} = \sum_{j \geq 1 } \delta_{(\tilde X_j, \tilde S_j)}$ where $\Phi^!_{\bm{x}} := \sum_{j \geq 1 } \delta_{\tilde X_j} \sim \plaw_{\Phi^{!}}^{\bm{x}}$ and the $\tilde S_j$'s are i.i.d. from $H$. Therefore, as in the main text, we use the short-hand notation $\Psi^!_{\bm{x}} \equiv \Psi^!_{\bm{x}, \bm{s}}$
Hence
\[
    \E_{\Psi \sim \plaw_{\Psi}^{\bm x, \bm s}} \left[g(\bm x, \bm s, \Psi)\right] = 
    \E \left[g\left(\bm x, \bm s, \sum_{j=1}^k \delta_{(x_j, s_j)} + \Psi^!_{\bm{x}}\right) \right] 
\]
where the expected value on the right-hand side is taken with respect to $\Psi^!_{\bm{x}}$.
Let $\mutilde_{\bm x}^!(A) := \int_{A \times \R_+} s \Psi^!_{\bm{x}}(\dd x \, \dd s)$, we have:
\begin{equation*}
    g\left(\bm x, \bm s, \sum_{j=1}^k \delta_{(x_j, s_j)} + \Psi^!_{\bm{x}}\right) = 
        \exp \left( - \int_\X (f(z) + u) \left( \sum_{j=1}^k s_j \delta_{x_j}(\dd z) + \mutilde^!_{\bm x}(\dd z) \right) \right) \prod_{j=1}^k s_j^{n_j} \delta_{x_j}(\dd y^*_j) .
\end{equation*}
Hence, \eqref{eq:piaceafederico} boils down to
\begin{align*}
    & \E \left[ \e^{-\int_\X f(z) \mutilde(\dd z)} \prob(\bm Y \in \dd \bm y, U_n \in \dd u \mid \mutilde) \right] \\
    & = \frac{u^{n-1}}{\Gamma(n)} \int_{(\X \times \R_+)^k} \E \left[ \exp \left( - \int_\X (f(z) + u) \left( \sum_{j=1}^k s_j \delta_{x_j}(\dd z) + \mutilde_{\bm x}^!(\dd z) \right) \right) \prod_{j=1}^k s_j^{n_j} \delta_{x_j}(\dd y^*_j) \right] \\
    & \qquad \qquad \qquad \qquad \times M_{\Phi^{(k)}}(\dd \bm x) H^{k}(\dd \bm s) \\
    &= \frac{u^{n-1}}{\Gamma(n)} \int_{(\X \times \R_+)^k} \E \left[ \e^{- \int_\X (f(z) + u) \mutilde_{\bm x}^!(\dd z)} \right] \exp \left( - \int_\X \left( f(z) + u \right) \left( \sum_{j=1}^k s_j \delta_{x_j}(\dd z) \right) \right) \\
    & \qquad \qquad \qquad \qquad \times \prod_{j=1}^k s_j^{n_j} \delta_{x_j}(\dd y^*_j) M_{\Phi^{(k)}}(\dd \bm x) H^{k}(\dd \bm s) \\
    &= \frac{u^{n-1}}{\Gamma(n)} \E \left[ \e^{- \int_\X (f(z) + u) \mutilde_{\bm y^*}^!(\dd z)} \right] M_{\Phi^{(k)}}(\dd \bm y^*) \prod_{j=1}^k \int_{\R_+} \e^{-(f(y^*_j) + u) s_j} s_j^{n_j} H(\dd s_j).
\end{align*}
Setting $f=0$ yields the denominator:
\begin{equation}
    \E \left[\prob(\bm Y \in \dd \bm y, U_n \in \dd u \mid \mutilde) \right] = \frac{u^{n-1}}{\Gamma(n)} \E \left[ \e^{- \int_\X u \mutilde_{\bm y^*}^!(\dd z)} \right] M_{\Phi^{(k)}}(\dd \bm y^*) \prod_{j=1}^k \int_{\R_+} \e^{-u s_j} s_j^{n_j} H(\dd s_j).
\label{eq:den} 
\end{equation}
Letting $\kappa(u, n_j) = \int_{\R_+} \e^{-u s_j} s_j^{n_j} H(\dd s_j)$, we obtain
\begin{align}
        & \E \left[ \exp \int_\X - f(z) \mutilde(\dd z) \midd \bm Y = \bm y, U_n = u \right] =
    	\frac{\E \left[ \e^{- \int_\X (f(z) + u) \mutilde_{\bm y^*}^!(\dd z)} \right]}{\E \left[ \e^{- \int_\X u \mutilde_{\bm y^*}^!(\dd z)} \right]} \prod_{j=1}^k \int_{\R_+} \frac{\e^{-f(y^*_j)s_j} \e^{-u s_j} s_j^{n_j}}{\kappa(u, n_j)} H(\dd s_j)\nonumber\\
     & \qquad\qquad = 
     \E \left[ \e^{- \int_\X f (z) \mutilde_{\bm y^*}^!(\dd z)} \cdot \frac{\e^{- u \mutilde_{\bm y^*}^!(\X )}}{\E \left[ \e^{-u \mutilde_{\bm y^*}^!(\X)}\right]}  \right] \prod_{j=1}^k \int_{\R_+} \frac{\e^{-f(y^*_j)s_j} \e^{-u s_j} s_j^{n_j}}{\kappa(u, n_j)} H(\dd s_j).   \label{eq:posterior_final_proof}
\end{align}
The product term over $j =1, \ldots, k$ in the previous expression equals the Laplace transform of $\sum_{j=1}^k S^*_j \delta_{Y^*_j}$ where the $S^*_j$'s are independent positive random variables with density proportional to $\e^{-u s_j} s_j^{n_j}  H(\dd s_j)$. 
Moreover, the first expected value in \eqref{eq:posterior_final_proof}  corresponds to the Laplace transform of a random measure $\mutilde^\prime$, whose probability distribution is absolutely continuous with respect to the distribution of $\mutilde_{\bm y^*}^!$ with associated density
\[
f_{\mutilde^\prime} (\mu) := \frac{\e^{-u \mu (\X)}}{\E \left[ \e^{-u \mutilde_{\bm y^*}^! (\X)}\right]}.
\]
Finally, the conditional distribution of $U_n$ easily follows from \eqref{eq:den} by conditioning on $\bm Y$.

\subsection{Proof of Theorem \ref{teo:marg}}

The proof follows by integrating \eqref{eq:den} with respect to $u$.

\subsection{Proof of Theorem \ref{teo:pred_conditional}}

Consider a Borel set $A \in \calX$, the predictive distribution equals
\begin{equation}
    \label{eq:predittiva_p1}
    \begin{split}
    \prob (Y_{n+1} \in A \mid \bm{Y} = \bm{y} )  & = \E \left[ \prob \left( Y_{n+1} \in A \mid \bm{Y} = \bm{y}  , \tilde p    \right) \mid \bm{Y} = \bm{y} \right]\\
    & = \E \left[ \tilde p (A) \mid \bm{Y} = \bm{y} \right]  = \int_0^\infty \E \left[ \tilde p (A) \mid \bm{Y} = \bm{y} , U_n=u\right]  f_{U_n | \bm{Y}} (u)  \dd u
    \end{split}
\end{equation}
where we used standard properties of conditional expectations. We now concentrate on the evaluation of the expected value 
$\E \left[ \tilde p (A) \mid \bm{Y} = \bm{y} , U_n=u\right] $ in \eqref{eq:predittiva_p1}. Thanks to the posterior representation of Theorem~\ref{teo:post}, we obtain
\begin{align} 
    &\E \left[ \tilde p (A) \mid \bm{Y} = \bm{y} , U_n=u\right] = \E \left[  \frac{\mutilde^\prime (A) + \sum_{j=1}^k S_j^* \delta_{y_j^*} (A)}{\mutilde^\prime (\X) + \sum_{j=1}^k S_j^*}  \right] \nonumber \\
    &\qquad = \E \left[  \frac{\mutilde^\prime (A)}{\mutilde^\prime (\X) + \sum_{j=1}^k S_j^*}  \right] +
    \E \left[  \frac{\sum_{j=1}^k S_j^* \delta_{y_j^*} (A)}{\mutilde^\prime (\X) + \sum_{j=1}^k S_j^*}  \right] \nonumber  \\
    & \qquad = \int_0^\infty \E \left[ \e^{-v \sum_{j=1}^k S_j^*}\right] \E \left[ \e^{-v \mutilde^\prime (\X) } \mutilde^\prime (A)\right] \dd v + \sum_{\ell=1}^k
    \int_0^\infty \E \left[ \e^{-v \sum_{j=1}^k S_j^*}  S_\ell^* \delta_{y_\ell^*} (A)\right] \E \left[ \e^{-v \mutilde^\prime (\X) } \right] \dd v \nonumber  \\
    & \qquad =: I_1^{(n)} (u; \bm{y})+ I_2^{(n)} (u; \bm{y}),\label{eq:prob_p_predittiva}
\end{align}
where we have observed that the $S_j^*$'s and $\mutilde^\prime$ are independent (see Theorem~\ref{teo:post}).
We now focus on the evaluation of the two terms in \eqref{eq:prob_p_predittiva} separately. As for the first integral, we have:
\begin{equation*}
    \begin{split}
        I_1^{(n)} (u; \bm{y}) &= \int_0^\infty \E \left[ \e^{-v \sum_{j=1}^k S_j^*}\right] \E \left[ \e^{-v \mutilde^\prime (\X) } \mutilde^\prime (A)\right] \dd v = \int_0^\infty \prod_{j=1}^k\E \left[ \e^{-v S_j^*}\right] \E \left[ \e^{-v \mutilde^\prime (\X) } \mutilde^\prime (A)\right] \dd v\\
        & = \int_0^\infty \prod_{j=1}^k \frac{\kappa (u+v , n_j)}{\kappa (u, n_j)} \E \left[ \e^{-v \mutilde^\prime (\X)} \mutilde^\prime (A)\right] \dd v
    \end{split}
\end{equation*}
where we have used the density of the $S_j^*$'s (see Theorem~\ref{teo:post}). Now, we exploit the fact that the distribution of $\mutilde^\prime$ is absolutely continuous with respect to the distribution of $\mutilde_{\bm y^*}^!$ with available Radon-Nikodym derivative, and we get
\begin{equation} \label{eq:proof_pred2}
    \begin{split}
        I_1^{(n)} (u; \bm{y}) &= \int_0^\infty \prod_{j=1}^k \frac{\kappa (u+v , n_j)}{\kappa (u, n_j)}
        \E \left[   \e^{-v \mutilde_{\bm y^*}^! (\X)}  \mutilde_{\bm y^*}^! (A)  \frac{\e^{-u \mutilde_{\bm y^*}^! (\X)} }{\E \left[ \e^{-u \mutilde_{\bm y^*}^! (\X)} \right]}\right]\dd v\\
        & =\frac{1}{\E \left[ \e^{-u \mutilde_{\bm y^*}^! (\X)}\right]} \times \int_0^\infty \prod_{j=1}^k \frac{\kappa (u+v , n_j)}{\kappa (u, n_j)}\E \left[  \int_{A \times \R_+} \e^{- (u+v)  \mutilde_{\bm y^*}^! (\X)}  s  \Psi^!_{\bm y^*} (\dd y \, \dd s)\right] \dd v.
    \end{split}
\end{equation}
The expected value inside the integral in \eqref{eq:proof_pred2} can be evaluated resorting to the Campbell-Little-Mecke formula, and we obtain:
\begin{equation} \label{eq:I1_Mphi}
\begin{split}
    & I_1^{(n)} (u; \bm{y}) = \int_0^\infty \prod_{j=1}^k \frac{\kappa (u+v , n_j)}{\kappa (u, n_j)} \int_A \int_{\R_+} s \e^{-(u+v)s}
    \frac{\E \left[   \e^{- (u+v)  \mutilde_{(\bm y^*, y)}^! (\X)}  \right]}{\E \left[ \e^{-u \mutilde_{\bm y^*}^! (\X)}\right]} H (\dd s ) M_{\Phi^{!}_{\bm y^*}} (\dd y)\dd v
    \end{split}
\end{equation}
where we used the Palm algebra, i.e., $(\mutilde_{\bm y^*}^!)^!_{y} = \mutilde_{(\bm y^*, y)}^!$. Since the factorial moment measure $M_{\Phi^{(k)}}$ is absolutely continuous with respect to $P_0^k$ with Radon-Nikodym derivative $m_{\Phi^k}$, then also the moment measure $M_{\Phi^{!}_{\bm y^*}} $ is absolutely continuous with respect to $P_0$. 
A proof of this simple fact can be obtained by applying \cite[Proposition 3.3.9]{BaBlaKa}, which states that
the following equality holds
\[
M_{\Phi^{(k+1)}} (A \times B) = \int_A M_{\Phi^!_{\bm{x}}}  (B) M_{\Phi^{(k)}} (\dd \bm{x})
\]
for any $A \in \calX^k$ and $B \in \calX$. By the fact that the factorial moment measures are absolutely continuous, the previous equality boils down to
\[
\int_A \int_B m_{\Phi^{k+1}} (\bm x, y) P_0 (\dd y) P_0^{k } (\dd \bm x) =
\int_A M_{\Phi^!_{\bm{x}}}  (B) m_{\Phi^k } (\bm x)  P_0^{k } (\dd \bm x) , 
\]
thus, since the previous equality holds for any $A \in \calX^k$, the density of the moment measure $M_{\Phi^!_{\bm{x}}}$ with respect to $P_0$ equals $m_{\Phi^{k+1}} (\bm x, y)/ m_{\Phi^k} (\bm x)$, for $P_0^k$-almost all $\bm x$. As a consequence, 
$I_1^{(n)} (u; \bm{y})$ in \eqref{eq:I1_Mphi} can be written as
\begin{equation} \label{eq:expression_I1}
\begin{split}
    I_1^{(n)} (u; \bm{y}) &= \int_0^\infty \kappa (u+v,1) \prod_{j=1}^k \frac{\kappa (u+v , n_j)}{\kappa (u, n_j)} \int_A 
    \frac{\E \left[   \e^{- (u+v)  \mutilde_{(\bm y^*, y)}^! (\X)}  \right]}{{\E \left[ \e^{-u \mutilde_{\bm y^*}^! (\X)}\right]}}  \frac{m_{\Phi^{k+1}} (\bm y^*, y)}{m_{\Phi^{k}} (\bm y^*) } P_0(\dd y)\dd v.
    \end{split}
\end{equation}
As for the second integral in \eqref{eq:prob_p_predittiva}, one can easily exploit the distribution of the jumps $S_j^*$'s
 and $\mutilde^\prime$, available in Theorem~\ref{teo:post}, to show that
\begin{equation}
    \label{eq:expression_I2}
    \begin{split}
    I_2^{(n)} (u; \bm{y}) & =\sum_{\ell=1}^k
    \int_0^\infty \E \left[ \e^{-v \sum_{j=1}^k S_j^*}  S_\ell^* \delta_{y_\ell^*} (A)\right] \E \left[ \e^{-v \mutilde^\prime (\X) } \right] \dd v \\
    &  = \sum_{\ell=1}^k  \int_0^\infty \prod_{j=1}^k \frac{\kappa (u+v, n_j)}{\kappa (u, n_j)}  \cdot
    \frac{\kappa (u+v, n_\ell +1)}{\kappa (u+v, n_\ell)}  \frac{\E \left[ \e^{- (u+v) \mutilde_{\bm y^*}^! (\X) } \right]}{
    \E \left[ \e^{- u \mutilde_{\bm y^*}^! (\X) } \right]} \dd v \times \delta_{y_\ell^*} (A).
    \end{split}
\end{equation}
We now remind that the predictive distribution can be obtained by integrating \eqref{eq:prob_p_predittiva} with respect to the posterior distribution of $U_n$ as follows:
\begin{equation}
    \label{eq:pred_with_integral}
    \begin{split}
     \prob (Y_{n+1} \in A \mid \bm{Y} = \bm{y} )  &=  \int_0^\infty \E \left[ \tilde p (A) \mid \bm{Y} = \bm{y} , U_n=u\right]  f_{U_n | \bm{Y}} (u)  \dd u\\
    & =     \int_0^\infty   I_1^{(n)} (u; \bm{y}) f_{U_n | \bm{Y}} (u)  \dd u + 
       \int_0^\infty   I_2^{(n)} (u; \bm{y}) f_{U_n | \bm{Y}} (u)  \dd u 
    \end{split}
\end{equation}
where $ I_1^{(n)} (u; \bm{y})$ and $ I_2^{(n)} (u; \bm{y})$ coincide with the two expressions in \eqref{eq:expression_I1} and \eqref{eq:expression_I2}, respectively. The first integral in \eqref{eq:pred_with_integral} equals
\begin{equation*}
    \begin{split}
    & \int_0^\infty   I_1^{(n)} (u; \bm{y}) f_{U_n | \bm{Y}} (u)  \dd u  \\
    & = \int_0^\infty  \int_0^\infty \kappa (u+v,1) \prod_{j=1}^k \frac{\kappa (u+v , n_j)}{\kappa (u, n_j)} \int_A 
    \frac{\E \left[   \e^{- (u+v)  \mutilde_{(\bm y^*, y)}^! (\X)}  \right]}{{\E \left[ \e^{-u \mutilde_{\bm y^*}^! (\X)}\right]}}  \frac{m_{\Phi^{k+1}} (\bm y^*, y)}{m_{\Phi^{k}} (\bm y^*) } P_0(\dd y)  f_{U_n | \bm{Y}} (u) \dd v \dd u .
    \end{split}
\end{equation*}
\MBtext{Now the change of variables $(w,z)= (u+v, u)$ implies that
\begin{align*}
    & \int_0^\infty   I_1^{(n)} (u; \bm{y}) f_{U_n | \bm{Y}} (u)  \dd u \\
    & = \int_A \int_0^\infty \int_0^w  \kappa (w,1) \prod_{j=1}^k \frac{\kappa (w , n_j)}{\kappa (z, n_j)} 
    \frac{\E \left[   \e^{- w  \mutilde_{(\bm y^*, y)}^! (\X)}  \right]}{{\E \left[ \e^{-z \mutilde_{\bm y^*}^! (\X)}\right]}}  \frac{m_{\Phi^{k+1}} (\bm y^*, y)}{m_{\Phi^{k}} (\bm y^*) }   f_{U_n | \bm{Y}} (z) \dd z \dd w P_0(\dd y)\\
     & = \int_A \int_0^\infty  \kappa (w,1) \prod_{j=1}^k \kappa (w , n_j)
    \E \left[   \e^{- w  \mutilde_{(\bm y^*, y)}^! (\X)}  \right]  \frac{m_{\Phi^{k+1}} (\bm y^*, y)}{m_{\Phi^{k}} (\bm y^*) } \\
  & \qquad \times \int_0^w  \left(\prod_{j=1}^k \kappa (z, n_j)\E \left[ \e^{-z \mutilde_{\bm y^*}^! (\X)}\right]\right)^{-1}f_{U_n | \bm{Y}} (z) \dd z \,  \dd w P_0(\dd y) .
\end{align*}
By observing that the density $f_{U_n | \bm{Y}} (z) $ is proportional to
\[
f_{U_n | \bm{Y}} (z) \propto z^{n-1} \prod_{j=1}^k \kappa (z, n_j)\E \left[ \e^{-z \mutilde_{\bm y^*}^! (\X)}\right]
\]
and writing the normalizing constant explicitly, we get that
\begin{align*}
     \int_0^\infty   I_1^{(n)} (u; \bm{y}) f_{U_n | \bm{Y}} (u)  \dd u &= \int_A \int_0^\infty  \kappa (w,1) \prod_{j=1}^k \kappa (w , n_j)
    \E \left[   \e^{- w  \mutilde_{(\bm y^*, y)}^! (\X)}  \right]  \frac{m_{\Phi^{k+1}} (\bm y^*, y)}{m_{\Phi^{k}} (\bm y^*) } \\
  &\qquad \times  \frac{\int_0^w z^{n-1} \dd z}{\int_0^\infty  u^{n-1} \prod_{j=1}^k \kappa (u, n_j)\E \left[ \e^{-u \mutilde_{\bm y^*}^! (\X)}\right] \dd u } \dd w P_0(\dd y)\\
  &= \int_A \int_0^\infty \frac{w}{n} \kappa (w,1) 
    \frac{\E \left[   \e^{- w  \mutilde_{(\bm y^*, y)}^! (\X)}  \right]}{\E \left[ \e^{-w \mutilde_{\bm y^*}^! (\X)}\right]} \frac{m_{\Phi^{k+1}} (\bm y^*, y)}{m_{\Phi^{k}} (\bm y^*) } \\
  &\qquad \times  \frac{w^{n-1} \prod_{j=1}^k \kappa (w , n_j)\E \left[ \e^{-w \mutilde_{\bm y^*}^! (\X)}\right]}{ \int_0^\infty  u^{n-1} \prod_{j=1}^k \kappa (u, n_j)\E \left[ \e^{-u \mutilde_{\bm y^*}^! (\X)}\right] \dd u } \dd w P_0(\dd y)
\end{align*}
where in the last equality we have solved the integral in the variable $z$ and we have multiplied and divided by the quantity
$\E \left[ \e^{-w \mutilde_{\bm y^*}^! (\X)}\right]$. We now observe that the expression appearing in the last line of the previous equation is exactly
\[
 f_{U_n | \bm{Y}} (w) = \frac{w^{n-1} \prod_{j=1}^k \kappa (w , n_j)\E \left[ \e^{-w \mutilde_{\bm y^*}^! (\X)}\right]}{ \int_0^\infty  u^{n-1} \prod_{j=1}^k \kappa (u, n_j)\E \left[ \e^{-u \mutilde_{\bm y^*}^! (\X)}\right] \dd u } .
\]
Thus, an application of the Fubini-Tonelli theorem yields:
\begin{equation} \label{eq:intI1}
    \begin{split}
    & \int_0^\infty   I_1^{(n)} (u; \bm{y}) f_{U_n | \bm{Y}} (u)  \dd u  \\
    & \qquad\qquad = \int_0^\infty  \frac{w}{n} \kappa (w,1)\int_A \frac{\E \left[   \e^{- w  \mutilde_{(\bm y^*, y)}^! (\X)}  \right]}{{\E \left[ \e^{-w \mutilde_{\bm y^*}^! (\X)}\right]}}  \frac{m_{\Phi^{k+1}} (\bm y^*, y)}{m_{\Phi^{k}} (\bm y^*) } P_0(\dd y)  f_{U_n | \bm{Y}} (w) \dd w . 
    \end{split}
\end{equation}}
Analogous considerations show that the second integral in \eqref{eq:pred_with_integral} equals
\begin{equation}
    \label{eq:intI2}
    \begin{split}
        &\int_0^\infty   I_2^{(n)} (u; \bm{y}) f_{U_n | \bm{Y}} (u)  \dd u = \sum_{j=1}^k \int_0^\infty  \frac{w}{n} \frac{\kappa (w, n_j +1)}{\kappa (w, n_j)}  f_{U_n | \bm{Y}} (w) \dd w  \delta_{y_j^*} (A).
    \end{split}
\end{equation}
By substituting Equations \eqref{eq:intI1}--\eqref{eq:intI2} in the expression of the predictive distribution \eqref{eq:pred_with_integral}, we finally obtain
\begin{equation*}
    \begin{split}
     &    \prob (Y_{n+1} \in A \mid \bm{Y} = \bm{y} )  =  \int_0^\infty  \frac{w}{n} \left[  \sum_{j=1}^k  \frac{\kappa (w, n_j +1)}{\kappa (w, n_j)}    \delta_{y_j^*} (A) \right.\\
     & \qquad\qquad\qquad \left. +   \kappa (w,1)\int_A \frac{\E \left[   \e^{- w  \mutilde_{(\bm y^*, y)}^! (\X)}  \right]}{{\E \left[ \e^{-w \mutilde_{\bm y^*}^! (\X)}\right]}}  \frac{m_{\Phi^{k+1}} (\bm y^*, y)}{m_{\Phi^{k}} (\bm y^*) } P_0(\dd y)   \right]  f_{U_n | \bm{Y}} (w) \dd w 
    \end{split}
\end{equation*}
and the thesis now follows by disintegration.

\subsection{Proof of Proposition \ref{prop:joint_nclus}}

    First, observe that $\mutilde_{\bm y^*}^!(A) = \sum_{j \geq 1} S_j \delta_{X_j}(A) = \int_{A \times \R_+} s \Psi^!_{\bm y^*}(\dd x \, \dd s)$, where $\Psi^!_{\bm y^*}$ is obtained by marking $\Phi^!_{\bm y^*}$ (the reduced Palm version of $\Phi$) with i.i.d. marks from $H$. So that, denoting with $n^!$ the number of points in $\Phi^!_{\bm y^*}$ we have
    \begin{align*}
        \E\left[ \e^{-\int_X u \mutilde_{\bm y^*}^! (\dd x)} \right] & = \E \left[ \E \left[ \exp \left(- \sum_{j \geq 1} u S_j \delta_{X_j}(\mathbb X) \right) \mid \Phi^!_{\bm{y^*}} \right]\right] \\
        &= \E \left[ \prod_{j=1}^{n^!} \int_{\R_+} \e^{- u s} H(\dd s) \right] = \E[\psi(u)^{n^!}] .
    \end{align*}
Let $q_{r} = P(n^! = r)$, $r=0, 1, \ldots$ the probability mass function of the number of points in $\Phi^!_{\bm y^*}$, so that $\E[\psi(u)^{n^!}] = \sum_{r \geq 0} \psi(u)^r q_r$, then we can write the marginal as
    \begin{align*}
        \prob(\bm Y^* \in \dd y^*, \tilde \pi = \pi) & = \int_{\R_+} \frac{u^{n-1}}{\Gamma(n)} \E \left[ \e^{- \int_\X u \mutilde_{\bm y^*}^!(\dd z)} \right] \prod_{j=1}^k \kappa(u, n_j) \dd u \,M_{\Phi^{(k)}}(\dd \bm y^*) \\
        &= \int_{\R_+} \frac{u^{n-1}}{\Gamma(n)} \sum_{r \geq 0} \psi(u)^r q_r \prod_{j=1}^k \kappa(u, n_j) \dd u \,M_{\Phi^{(k)}}(\dd \bm y^*) \\
        &= \sum_{r \geq 0} q_r \int_{\R_+} \frac{u^{n-1}}{\Gamma(n)}  \psi(u)^r  \prod_{j=1}^k \kappa(u, n_j) \dd u \, M_{\Phi^{(k)}}(\dd \bm y^*),
    \end{align*}
    where the third equality follows by an application of  Fubini's theorem.\\ 
    Then, by the definition of $V(n_1, \ldots, n_k; r)$, we have
    \begin{align*}
        \prob(K_n = k, \bm Y^* \in \dd \bm y^*) = \frac{1}{k!} \sum_{r=0}^\infty \left(\sum_{(n_1,  \ldots, n_k) \in \mathbb S_n^k} \binom{n}{n_1 \cdots n_k}  V(n_1, \ldots, n_k; r) \right) q_r \  M_{\Phi^{(k)}}(\dd \bm y^*),
    \end{align*}
    where $\mathbb S_n^k$ denotes the set of $k$-compositions, i.e. $\mathbb S_n^k = \{(n_1, \ldots, n_k) \ : n_i \geq 1, n_1 + \cdots + n_k = n\} \subset \mathbb N^k$.

\subsection{Proof of Corollary \ref{cor:uniques_gamma}}

We have that $\psi(u) = (u + 1)^{- a}$ and $\kappa(u, n) = (a)_n (u+1)^{-(n+a)}$, where $(a)_n := \Gamma(a + n) / \Gamma(a)$ denotes the rising factorial or Pochhammer symbol.
First, observe that
\begin{align*}
     V(n_1, \ldots, n_k; r)  & = \int_{\R_+} \frac{u^{n-1}}{\Gamma(n)}  \psi(u)^r  \prod_{j=1}^k \kappa(u, n_j) \dd u = \frac{1}{\Gamma(n)} \prod_{j=1}^k (a)_{n_j} \int \frac{u^{n -1}}{(u + 1)^{n + a k + a r}} \dd u \\
     &= \frac{1}{\Gamma(n)} \prod_{j=1}^k (a)_{n_j} \frac{\Gamma\left((k + r) a\right) \Gamma(n)}{\Gamma\left((k + r)a + n\right)}.
\end{align*}
Thanks to the previous expression and by Proposition \ref{prop:joint_nclus}, one has:
\[
    \prob(\bm Y^* \in \dd y^*, \tilde \pi = \pi) = \prod_{j=1}^k (a)_{n_j}   \left(\sum_{r \geq 0}  q_r \frac{\Gamma(\left(k + r) a\right)}{\Gamma\left((k + r)a + n\right)} \right) \   M_{\Phi^{(k)}}(\dd \bm y^*).
\]
Thus, the probability of interest can be evaluated by marginalising out the frequencies of the observations:
\begin{align*}
    \prob(K_n = k, \bm Y^* \in \dd \bm y^*) & = \left(\sum_{r \geq 0}  q_r \frac{\Gamma\left((k + r) a\right)}{\Gamma\left((k + r)a + n\right)} \right) M_{\Phi^{(k)}}(\dd \bm y^*) \\
    & \qquad \times \frac{1}{k!} \sum_{(n_1,  \ldots, n_k) \in \mathbb S_n^k}  \binom{n}{n_1 \cdots n_k} \prod_{j=1}^k (a)_{n_j} \\
    &= \frac{1}{\Gamma(n)} (-1)^n \mathcal{C}(n, k; -a) \left(\sum_{r \geq 0}  q_r \frac{\Gamma\left((k + r) a\right)}{\Gamma\left((k + r)a + n\right)} \right) M_{\Phi^{(k)}}(\dd \bm y^*)
\end{align*}
where the last equality follows from \cite[Theorem 8.16]{charalambides_enumerativeC}.

\medskip

\section{The case of Poisson point processes}
\label{app:poisson}

A Poisson point process $\Phi$ with mean measure $\omega$ (or intensity $\omega$) is a random counting measure such that for  $A_1, \ldots, A_n \in \calX$ disjoint sets and for any $n \geq 1$, the random elements $\Phi(A_1), \ldots, \Phi(A_n)$ are independent Poisson random variables with parameters  $\omega(A_1), \ldots, \omega(A_n)$.
Poisson point processes provide the main building block for completely random measures. 

Focusing on the prior moment results in Proposition \ref{prop:prior_moms} we can see that if $\Phi$ is a Poisson point process with intensity $\omega(\dd x)$, $M_\Phi(A) = \omega(A)$ and $M_{\Phi^2}(A \times B) = \omega(A) \omega(B) + \omega(A \cap B)$. Hence
\[
    \E[\mutilde(A)] = \E[S] \omega(A), \qquad \Cov(\mutilde(A),\mutilde(B)) = \E[S^2] \omega(A \cap B) .
\]
In particular, if $A \cap B = \emptyset$, $\Cov(\mutilde(A),\mutilde(B))$ is zero, which was expected since the random variables are independent.

Next, we specialise Theorem~\ref{teo:post} to the case of Poisson processes.
\begin{theorem}\label{teo:post_Poi}
    Let $\Phi$ be a Poisson point process with intensity $\omega(\dd x)$. Then, the distribution of the random measure $\mutilde^\prime(\cdot)$ in \Cref{teo:post} is equal to the distribution of
    \[
    \int_{ \cdot \times \R_+ } s \Psi'(\dd x \, \dd s ) = \sum_{j \geq 1} S^\prime_j \delta_{X^\prime_j}(\cdot), 
    \]
    where $\Psi':=\sum_{j \geq 1} \delta_{(X_j' , S_j')}$ is a marked point process whose unmarked point process $\Phi' := \sum_{j \geq 1} \delta_{X_j '}$ is a Poisson process with intensity given by $\psi(u) \omega(\dd x)$ and the marks $S_j'$ are i.i.d. with distribution \eqref{eq:h_tilt}.
\end{theorem}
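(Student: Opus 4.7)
My plan is to apply Theorem \ref{teo:post} and reduce the expression \eqref{eq:Laplace_mu_post} to the Laplace functional of a marked Poisson process using the Slivnyak--Mecke theorem and Lemma \ref{lemma:laplace_marked}. The strategy is to compute the Laplace functional of $\mutilde^\prime$ explicitly and identify it as a CRM based on a Poisson process of atoms with exponentially tilted marks.

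\textbf{Step 1: invoke Slivnyak--Mecke.} The characteristic property of a Poisson process $\Phi$ with intensity $\omega$ is that the reduced Palm version $\Phi^!_{\bm y^*}$ has the same distribution as $\Phi$ itself, for $M_{\Phi^k}$-a.e. $\bm y^*$. By Lemma \ref{prop:marked_palm}, since marks are i.i.d.\ from $H$ independently of the ground process, the marked reduced Palm version $\Psi^!_{\bm y^*}$ has the same distribution as the original marked Poisson process $\Psi$. Consequently the random measure $\mutilde^!_{\bm y^*}$ defined in \eqref{eq:palm_mu} is distributionally equal to $\mutilde$, and in particular does not depend on $\bm y^*$.

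\textbf{Step 2: compute the numerator and denominator of \eqref{eq:Laplace_mu_post} via Lemma \ref{lemma:laplace_marked}.} For any bounded non-negative $g: \X \to \R_+$,
\[
    \E\!\left[\e^{-\int g(x)\, \mutilde^!_{\bm y^*}(\dd x)}\right] = \E\!\left[\exp\!\left(\int_\X \log\psi(g(x))\,\Phi(\dd x)\right)\right] = \exp\!\left(-\int_\X (1-\psi(g(x)))\,\omega(\dd x)\right),
\]
using the well-known Laplace functional of a Poisson process. Specializing this to $g = f+u$ for the numerator and $g \equiv u$ for the denominator of \eqref{eq:Laplace_mu_post}, the ratio simplifies to
\[
    \E\!\left[\e^{-\int f(x)\,\mutilde^\prime(\dd x)}\right] = \exp\!\left(-\int_\X \bigl(\psi(u) - \psi(f(x)+u)\bigr)\,\omega(\dd x)\right).
\]

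\textbf{Step 3: recognize the result.} Factoring out $\psi(u)$ and writing $\psi(f(x)+u)/\psi(u) = \int_{\R_+} \e^{-s f(x)} H^\prime(\dd s)$, where $H^\prime$ is the exponential tilt in \eqref{eq:h_tilt}, the previous display becomes
\[
    \exp\!\left(-\int_\X \psi(u)\left[1 - \int_{\R_+}\e^{-sf(x)}H^\prime(\dd s)\right]\omega(\dd x)\right),
\]
which, again via Lemma \ref{lemma:laplace_marked}, is exactly the Laplace functional of $\sum_{j\ge 1} S_j^\prime \delta_{X_j^\prime}$ with $(X_j^\prime)$ forming a Poisson process of intensity $\psi(u)\,\omega(\dd x)$ and i.i.d.\ marks $S_j^\prime \sim H^\prime$ (so, in line with Example \ref{ex:pois1}, the intensity of $\Phi^\prime$ carries the factor $\psi(u)$). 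Uniqueness of the Laplace functional then delivers the claim.

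\textbf{Main obstacle.} There is no deep difficulty here: the only real work is combining Slivnyak's theorem with the standard Poisson Laplace functional; the potentially delicate point is purely bookkeeping, namely identifying the tilted mark law $H^\prime$ from the ratio $\psi(f+u)/\psi(u)$ and correctly tracking the thinning factor $\psi(u)$ in the intensity of $\Phi^\prime$.
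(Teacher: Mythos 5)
Your proof is correct and follows essentially the same route as the paper's: invoke Slivnyak--Mecke to reduce $\mutilde^!_{\bm y^*}$ to $\mutilde$, apply Lemma \ref{lemma:laplace_marked} together with the Poisson L\'evy--Khintchine formula to both numerator and denominator of \eqref{eq:Laplace_mu_post}, and identify the resulting exponential as the Laplace functional of a marked Poisson process with tilted marks $H'$ and thinned intensity $\psi(u)\,\omega(\dd x)$. Note that the statement of Theorem \ref{teo:post_Poi} as printed says the intensity is $\omega(\dd x)$, but both your derivation and the paper's own proof (as well as Example \ref{ex:pois1}) yield $\psi(u)\,\omega(\dd x)$; the theorem statement appears to drop the thinning factor.
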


\begin{proof}
    The random measure $\mutilde^\prime$ in Theorem~\ref{teo:post} has Laplace functional \eqref{eq:Laplace_mu_post}.
    By Lemma \ref{lemma:laplace_marked}, we have that the numerator in \eqref{eq:Laplace_mu_post} can be expressed as
    \[
        \E \left[ \exp \left\{ - \int_\X (f(z) + u) \mutilde_{\bm y^*}^!(\dd z)\right\}\right] 
        = \E\left[ \exp\left(\int_\X \log \int_{\R_+} \e^{- s(f(z) + u)} H(\dd s) \Phi^!_{\bm y^*}(\dd x) \right) \right],
    \]
    where $\Phi^!_{\bm y^*}$ is the reduced Palm version of $\Phi$ at $\bm y^*$. Thanks to the properties of the Poisson process, we have that  $\Phi^!_{\bm y^*}$ equals $\Phi$ in distribution. Hence, the previous expected value can be evaluated by resorting to the L\'evy-Khintchine representation (see, e.g., \cite{kingman1992poisson}):
    \[
      \E \left[ \exp \left\{ - \int_\X (f(z) + u) \mutilde_{\bm y^*}^!(\dd z)\right\}\right] 
        = \exp \left( - \int_\X \int_{\R_+} 1 - \e^{- s(f(z) + u)} H(\dd s) \omega(\dd x)\right).
    \]
  The previous expression is useful to evaluate the denominator in \eqref{eq:Laplace_mu_post}, which can be obtained with the choice $f=0$. We now combine the numerator and denominator to evaluate the Laplace functional of $\tilde{\mu}^\prime$:
    \begin{align*}
        \E\left[\exp \int_\X - f(z) \mutilde^\prime(\dd z) \right] & = \exp \left( - \int_{\X}\int_{\R_+} -\e^{- s(f(z) + u)} + \e^{- su} H(\dd s) \omega(\dd x) \right) \\
        &  = \exp \left( - \int_{\X}\int_{\R_+} \left( 1 -\e^{- sf(z)} \right) \e^{- su} H(\dd s) \omega(\dd x) \right).
    \end{align*}
    By multiplying and dividing by $\psi(u) := \int_{\R_+} \e^{-su} H(\dd s)$, we recognize the Laplace transform of the random measure 
    \[ 
        \mutilde^\prime = \sum_{j \geq 1} S^\prime_j \delta_{X^\prime_j}
    \]
    where the $S'_j$'s are i.i.d. random variables with density $\psi(u)^{-1} \e^{-su} H(\dd s)$ and $\sum_{j \geq  1} \delta_{X^\prime_j}$ is a Poisson random measure with intensity $\psi(u) \omega(\dd x)$.
\end{proof}

Let us now concentrate on the marginal and predictive characterizations in Theorems~\ref{teo:marg} and \ref{teo:pred_conditional}. Since $M_{\Phi^k}(\dd \bm y^*) = \prod_{i=1}^k \omega(\dd y^*_i)$, it follows that \[
    \e^{\omega(\X)(\psi(u) - 1)} \prod_{j=1}^k \kappa(u, n_j) \prod_{j=1}^k \omega(\dd y^*_j) .
\]
Moreover, it is clear that
\[
    \frac{m_{\Phi^{k+1}}(\bm y^*, y)}{m_{\Phi^{k}}(\bm y^*)} P_0(\dd y) = \omega(\dd y).
\]
By the properties of the Poisson point process, we have that $\mu^!_{(\bm y^*, y)}$ and $\mu^!_{\bm y^*}$ have the same distribution so that the ratio of expectations in Theorem~\ref{teo:pred_conditional} equals one.

\medskip

\section{The case of Gibbs point processes}
\label{app:gibbs}

In this appendix, we specialize our general results in the manuscript to the case of $\Phi$ being a Gibbs point process.
Following \cite{BaBlaKa}, we say that a Gibbs point process is a point process $\Phi$ that is absolutely continuous with respect to a given Poisson process $\tilde N$. More specifically, $\Phi$ on  $(\X, \mathcal{X})$
is \textit{absolutely continuous} with respect to $\tilde N$ if for any $L \in \mathcal{M} (\X)$ such that $\prob (\tilde N\in L)=0$, one has $\prob (\Phi\in L)=0 $.
The Radon-Nikodym theorem yields the existence of the density $f_\Phi: \X \rightarrow \R_+$ (i.e., the Radon-Nikodym derivative) such that 
\begin{equation*}
\label{eq:pp_dens}
    \plaw_\Phi(\dd \nu) = f_\Phi(\nu) \plaw_{\tilde N}(\dd \nu).
\end{equation*}

In practice, we assign a Gibbs point process through its density $f_\Phi$. However, this task might be complex because of the constraint $\E[f_\Phi(\tilde N)] = 1$.
In particular, to the best of our knowledge, the expected value cannot be computed in closed form, even for very simple densities.
Therefore, unnormalized densities $g_\Phi$ such that $\E[g_\Phi (\tilde N)] < +\infty$ are typically considered and then normalized as  
$f_\Phi(\nu) = g_\Phi / Z$, where $Z := \E[g_\Phi(\tilde N)]$ are intractable normalizing constants.
Among the other Gibbs points processes, we mention the Strauss process \citep[see][Section 2.1]{beraha21}.
 See also \cite{MoWaBook03} for different examples of Gibbs point processes and associated unnormalised densities.

We assume that $\Phi$ is hereditary, meaning that for any $\nu = \sum_{j = 1}^m \delta_{X_j}\in \M(\X)$ and $ X^* \in \X \setminus \{X_1, \ldots X_m \},$ $g_\Phi(\nu + \delta_{X^*}) > 0$ implies $g_\Phi(\nu) > 0$.
Then, we can introduce the Papangelou conditional intensity, defined as
\begin{equation*}
\label{eq:papangelou}
    \lambda_\Phi(\nu; \bm X^*) = \frac{f_\Phi\left( \nu + \sum_{j=1}^k \delta_{X^*_j} \right)}{f_\Phi\left(\sum_{j=1}^k \delta_{X^*_j} \right)} = \frac{g_\Phi\left( \nu + \sum_{j=1}^k \delta_{X^*_j} \right)}{g_\Phi\left(\sum_{j=1}^k \delta_{X^*_j} \right)}
\end{equation*}
where $\bm X^* = \{X^*_1, \ldots, X^*_k\}$.
The Papangelou conditional intensity can be understood as the conditional ``density'' of $\Phi$ having atoms as in $\nu$, given that the rest of $\Phi$ is $\bm X^*$.
We say that a Gibbs point process $\Phi$ has a repulsive behaviour if
$\lambda_\Phi(\nu; \bm X^*)$ is a nonincreasing function of $\bm X^*$, that is, $\lambda_\Phi(\nu; \bm X^*) \geq \lambda_\Phi(\nu; \bm X^* \cup X^\prime)$ for any $X^\prime \in \X \setminus (\bm X^* \cup X^\prime)$.

Before specialising our treatment to the case of Gibbs processes, we derive here the expressions of the factorial moment measure and reduced Palm kernel.

\begin{proposition}\label{prop:mean_gibbs}
  The $k$-th factorial moment measure of a Gibbs point process with density $f_\Phi$ with respect to a Poisson point process $\tilde N$ with intensity $\omega$ is
  \[
    M_{\Phi^{(k)}}(\dd x_1 \cdots \dd x_k) =\E_{\tilde N}\left[f_\Phi\left(\tilde N + \sum_{j=1}^k \delta_{x_j} \right)  \right] \indicator[x_1 \neq x_2 \cdots \neq x_k] \omega(\dd x_1) \cdots \omega(\dd x_k).
  \]
\end{proposition}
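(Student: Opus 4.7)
The plan is to reduce the statement to an application of the multivariate Campbell--Mecke formula for the underlying Poisson process $\tilde N$, exploiting the Radon--Nikodym relation between $\Phi$ and $\tilde N$. Because $M_{\Phi^k}$ and the $k$-th factorial moment measure $M_{\Phi^{(k)}}$ agree off the diagonal (as noted after Theorem \ref{teo:clm_mult}), and the statement concerns pairwise distinct points $x_1, \dots, x_k$, it suffices to verify the claimed identity for $M_{\Phi^{(k)}}$.

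First, I would write, for measurable rectangles $A_1 \times \cdots \times A_k$,
\[
    M_{\Phi^{(k)}}(A_1 \times \cdots \times A_k) = \E\left[\sum_{(X_1,\dots,X_k) \in \Phi}^{\neq} \indicator_{A_1}(X_1) \cdots \indicator_{A_k}(X_k)\right].
\]
Using that $\plaw_\Phi(\dd \nu) = f_\Phi(\nu) \plaw_{\tilde N}(\dd \nu)$, the right-hand side equals
\[
    \E_{\tilde N}\!\left[ f_\Phi(\tilde N) \sum_{(X_1,\dots,X_k) \in \tilde N}^{\neq} \indicator_{A_1}(X_1) \cdots \indicator_{A_k}(X_k)\right].
\]

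Next, I would apply the multivariate (reduced) Campbell--Mecke formula for the Poisson process $\tilde N$ with intensity $\omega$: for any measurable $g: \X^k \times \M(\X) \to \R_+$,
\[
    \E_{\tilde N}\!\left[\sum_{(X_1,\dots,X_k) \in \tilde N}^{\neq} g(X_1, \dots, X_k, \tilde N - \textstyle\sum_j \delta_{X_j})\right] = \int_{\X^k} \E_{\tilde N}[g(x_1, \dots, x_k, \tilde N)] \,\omega(\dd x_1)\cdots \omega(\dd x_k),
\]
which reflects the Slivnyak self-similarity of Poisson processes (the reduced Palm version of $\tilde N$ is again $\tilde N$). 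Taking
\[
    g(x_1, \dots, x_k, \nu) = \indicator_{A_1}(x_1) \cdots \indicator_{A_k}(x_k)\, f_\Phi\!\left(\nu + \textstyle\sum_{j=1}^k \delta_{x_j}\right)
\]
and observing that in the sum we have $\tilde N = (\tilde N - \sum_j \delta_{X_j}) + \sum_j \delta_{X_j}$, we obtain
\[
    M_{\Phi^{(k)}}(A_1 \times \cdots \times A_k) = \int_{A_1 \times \cdots \times A_k} \E_{\tilde N}\!\left[f_\Phi\!\left(\tilde N + \textstyle\sum_{j=1}^k \delta_{x_j}\right)\right] \omega(\dd x_1)\cdots \omega(\dd x_k),
\]
which is exactly the asserted density. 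Finally, since this holds off the diagonal and $M_{\Phi^k}$ coincides with $M_{\Phi^{(k)}}$ there, the proof is complete.

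The only delicate point is invoking the multivariate Mecke identity for $\tilde N$ at the right ``reduction'' (subtracting the $k$ sampled atoms); once this is set up correctly, the rest is essentially bookkeeping. No measurability or integrability issue arises beyond assuming $f_\Phi$ is a bona fide density and $\omega$ is locally finite.
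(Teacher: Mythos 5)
Your proposal is correct and follows essentially the same route as the paper: expand the moment measure via the Radon–Nikodym density $f_\Phi$ of $\Phi$ with respect to $\tilde N$, then apply the multivariate reduced Campbell–Mecke (Slivnyak) identity for the Poisson process, whose reduced Palm version is again $\tilde N$. The only cosmetic difference is that you pass explicitly through the factorial moment measure and write the argument on measurable rectangles, whereas the paper works directly with $\prod_j \nu(\dd x_j)$ and invokes the paper's Theorem \ref{teo:clm_mult}, relying on the off-diagonal identification of $M_{\Phi^k}$ with $M_{\Phi^{(k)}}$ noted at the end of Appendix \ref{sec:app_palm}.
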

\begin{proof}
Let $\dd \bm x = \dd x_1 \times \cdots \times \dd x_k$, where the $x_j$'s are pairwise distinct. Then
\begin{align*}
    M_{\Phi^{(k)}}(\dd \bm x) &= \int_{\M(\X)} \prod_{j=1}^k \nu(\dd x_j) \plaw_\Phi(\dd \nu) = \int_{\M(\X)} \prod_{j=1}^k \nu(\dd x_j) f_\Phi(\nu) \plaw_N(\dd \nu) \\
    &= \E_{\tilde N}\left[\int_{\X^k} \prod_{j=1}^k \delta_{x_j}(y_j) f_\Phi(\tilde N) \tilde N^k(\dd y_1, \ldots, \dd y_k) \right] \\
    &= \int_{\X^k} \E_{\tilde N}\left[f_\Phi\left(\tilde N + \sum_{j=1}^k \delta_{x_j} \right)  \right] \omega(\dd x_1) \cdots \omega(\dd x_k),
\end{align*}
where the last equation follows by applying \Cref{teo:clm_mult} to the Poisson process $\tilde N$, for which $\tilde N^!_{\bm{x}} \sim \tilde N$ and the fact that the $k$th factorial moment measure of  $\tilde N$ equals $  \omega^k(\dd \bm y)$.
\end{proof}

\begin{proposition}  \label{prop:Gibbs_density_reduced_Palm}
  The $k$-th reduced Palm distribution of a Gibbs point process with density $f_\Phi$ with respect to a Poisson point process $\tilde N$ is the distribution of another Gibbs point process $\Phi^!_{\bm x}$ with density
  \[
    f_{\Phi^!_{\bm x}}(\nu) = \frac{f_\Phi(\nu + \sum_{j=1}^k \delta_{x_j})}{\E_{\tilde N}\left[ f_\Phi(\tilde N + \sum_{j=1}^k \delta_{x_j}) \right]}
  \]
  with respect to the law of $\tilde N$.
\end{proposition}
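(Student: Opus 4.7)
The plan is to identify the reduced Palm distribution by testing it against arbitrary nonnegative measurable functions $g\colon \X^k\times \M(\X)\to \R_+$ via the higher-order CLM formula of Theorem \ref{teo:clm_mult}, and then matching the result to the Radon--Nikodym derivative claimed in the statement. The key technical input will be the Slivnyak--Mecke theorem for the Poisson reference measure $\tilde N$, i.e.\ that its $k$-th reduced Palm version is distributed as $\tilde N$ itself, together with the expression for $M_{\Phi^k}$ obtained in Proposition~\ref{prop:mean_gibbs}.

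First, I would rewrite the left-hand side of the CLM identity for $\Phi$ using the density representation \eqref{eq:pp_dens}. For any measurable $g\ge 0$,
\[
  \E\!\left[\int_{\X^k} g(\bm x,\Phi-\textstyle\sum_{j=1}^k\delta_{x_j})\,\Phi^{(k)}(\dd\bm x)\right]
  = \E_{\tilde N}\!\left[f_\Phi(\tilde N)\int_{\X^k} g(\bm x,\tilde N-\textstyle\sum_{j=1}^k\delta_{x_j})\,\tilde N^{(k)}(\dd\bm x)\right].
\]
Next, I would apply Theorem \ref{teo:clm_mult} to the Poisson process $\tilde N$, using $M_{\tilde N^{(k)}}(\dd\bm x)=\omega^k(\dd\bm x)$ and the Slivnyak--Mecke property $\tilde N^{!}_{\bm x}\stackrel{d}{=}\tilde N$, to get
\[
  \E_{\tilde N}\!\left[f_\Phi(\tilde N)\int_{\X^k} g(\bm x,\tilde N-\textstyle\sum_j\delta_{x_j})\,\tilde N^{(k)}(\dd\bm x)\right]
  = \int_{\X^k}\E_{\tilde N}\!\left[f_\Phi(\tilde N+\textstyle\sum_j\delta_{x_j})\,g(\bm x,\tilde N)\right]\omega^k(\dd\bm x).
\]
On the other hand, the right-hand side of CLM for $\Phi$, combined with Proposition~\ref{prop:mean_gibbs}, equals
\[
  \int_{\X^k}\E\!\left[g(\bm x,\Phi^!_{\bm x})\right]\,\E_{\tilde N}\!\left[f_\Phi(\tilde N+\textstyle\sum_j\delta_{x_j})\right]\omega^k(\dd\bm x).
\]

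Equating these two expressions for all measurable $g\ge 0$ and appealing to the essential uniqueness of the Palm kernel, I would conclude that for $\omega^k$-a.e.\ $\bm x$ one has
\[
  \E\!\left[g(\bm x,\Phi^!_{\bm x})\right]
  = \E_{\tilde N}\!\left[\,\frac{f_\Phi(\tilde N+\sum_{j=1}^k\delta_{x_j})}{\E_{\tilde N}[f_\Phi(\tilde N+\sum_{j=1}^k\delta_{x_j})]}\,g(\bm x,\tilde N)\right],
\]
whenever the denominator is positive (which holds $M_{\Phi^k}$-a.e.\ by Proposition~\ref{prop:mean_gibbs}, since otherwise the associated component of $M_{\Phi^k}$ vanishes). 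This is exactly the statement that $\Phi^!_{\bm x}$ is absolutely continuous with respect to $\tilde N$ with the claimed Radon--Nikodym density, and its hereditary character follows from that of $\Phi$. The main obstacle I anticipate is bookkeeping around the passage from the factorial power $\tilde N^{(k)}$ to the product form used in the density expression, and handling the null set where the normalising expectation vanishes; both are handled cleanly by restricting to $\bm x$ with pairwise distinct coordinates (a set of full $M_{\Phi^{(k)}}$ measure since $\Phi$ is simple) and by the hereditary property of the Gibbs density.
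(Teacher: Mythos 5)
The paper itself does not spell out a proof of Proposition~\ref{prop:Gibbs_density_reduced_Palm}; it simply defers to \cite{BaBlaKa}, Section 3.2.3. Your argument is correct and is exactly the natural one in this framework: transfer the left side of the reduced higher-order CLM identity for $\Phi$ to $\tilde N$ via the density $f_\Phi$, apply Slivnyak--Mecke to the Poisson reference process, compare with the right side using the moment measure formula from Proposition~\ref{prop:mean_gibbs}, and invoke essential uniqueness of the Palm kernel. The bookkeeping points you flag (restricting to distinct coordinates so that $M_{\Phi^k}=M_{\Phi^{(k)}}$, and noting that the set where the normalising expectation vanishes is $M_{\Phi^{(k)}}$-null precisely because that expectation is the Radon--Nikodym density of $M_{\Phi^{(k)}}$ with respect to $\omega^k$) are handled correctly. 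This matches the standard proof the paper references.
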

See \cite{BaBlaKa}, Section~3.2.3, for a proof.

\begin{theorem}
\label{teo:post_Gibbs}

If $\Phi$ is a Gibbs point process with density $f_{\Phi}$ with respect to the Poisson process $\tilde N$, the random measure $\mutilde^\prime$ in \Cref{teo:post} is equal to the distribution of the random measure
    \[
    \tilde \eta(A) := \int_{ A \times \R_+ } s \Psi'(\dd x \, \dd s ) = \sum_{j \geq 1} S^\prime_j \delta_{X^\prime_j}(A), \qquad A \in \mathcal X
    \]
where $\Psi':=\sum_{j \geq 1} \delta_{(X_j' , S_j')}$ is a marked point process whose unmarked point process $\Phi' := \sum_{j \geq 1} \delta_{X_j '}$ is of Gibbs type with density, with respect to the Poisson process $\tilde N$, given by
\[
    f_{\Phi'} (\nu ) :=  \frac{ \psi(u)^{\nu(\X)} f_{\Phi} (\nu +  \sum_{j=1}^k \delta_{y_j^*} ) }{\E_{\tilde N} \left[ \psi(u)^{\tilde N(\X)}  f_{\Phi} (\tilde N +  \sum_{j=1}^k \delta_{y_j^*} ) \right]},
\]
and the marks $S_j'$ are i.i.d. with distribution \eqref{eq:h_tilt}.
\end{theorem}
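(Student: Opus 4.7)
The plan is to compute the Laplace functional of the marked process $\tilde\eta$ described in the statement and check that it matches the posterior Laplace functional \eqref{eq:Laplace_mu_post} of $\mutilde'$ given in Theorem \ref{teo:post}. Since Gibbs processes are specified by a density with respect to a Poisson reference, the whole proof reduces to pushing both the numerator and denominator in \eqref{eq:Laplace_mu_post} onto the Poisson process $\tilde N$, weighted by an explicit Radon--Nikodym factor, and then recognizing the result.

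First, I would apply Lemma \ref{lemma:laplace_marked} to both sides of \eqref{eq:Laplace_mu_post}: for any bounded nonnegative $g$,
\[
\E\!\left[\exp\!\left(-\int_\X g(z)\,\mutilde^!_{\bm y^*}(\dd z)\right)\right]
= \E\!\left[\exp\!\left(\int_\X \log\psi(g(x))\,\Phi^!_{\bm y^*}(\dd x)\right)\right].
\]
Taking $g = f+u$ in the numerator and $g=u$ in the denominator reduces the Laplace functional of $\mutilde'$ to a ratio of expectations of exponential functionals of the \emph{unmarked} reduced Palm process $\Phi^!_{\bm y^*}$. Next, I would invoke Proposition \ref{prop:Gibbs_density_reduced_Palm}, which tells us that $\Phi^!_{\bm y^*}$ has density
\[
f_{\Phi^!_{\bm y^*}}(\nu)=\frac{f_\Phi\bigl(\nu+\sum_{j=1}^k\delta_{y_j^*}\bigr)}{\E_{\tilde N}\bigl[f_\Phi\bigl(\tilde N+\sum_{j=1}^k\delta_{y_j^*}\bigr)\bigr]}
\]
with respect to $\tilde N$. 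Substituting and noting that the normalizing constants in numerator and denominator cancel, the Laplace functional of $\mutilde'$ becomes
\[
\frac{\E_{\tilde N}\!\bigl[\exp\!\bigl(\int_\X \log\psi(f(x)+u)\,\tilde N(\dd x)\bigr)\,f_\Phi\bigl(\tilde N+\sum_j\delta_{y_j^*}\bigr)\bigr]}{\E_{\tilde N}\!\bigl[\exp\!\bigl(\int_\X \log\psi(u)\,\tilde N(\dd x)\bigr)\,f_\Phi\bigl(\tilde N+\sum_j\delta_{y_j^*}\bigr)\bigr]}.
\]

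Then I would compute the Laplace functional of the candidate $\tilde\eta$ directly. Since the marks $S'_j$ are i.i.d.\ from $H'$ with Laplace transform $\int_{\R_+} e^{-sf(x)}H'(\dd s)=\psi(f(x)+u)/\psi(u)$, another application of Lemma \ref{lemma:laplace_marked} gives
\[
\E\!\left[\exp\!\left(-\int_\X f(z)\,\tilde\eta(\dd z)\right)\right]
= \E\!\left[\exp\!\left(\int_\X \log\tfrac{\psi(f(x)+u)}{\psi(u)}\,\Phi'(\dd x)\right)\right].
\]
Expressing this expectation against $\tilde N$ via the stated density $f_{\Phi'}$, the factor $\exp(\int_\X \log\psi(u)\,\tilde N(\dd x))$ in $f_{\Phi'}$ combines with $\exp(-\int_\X\log\psi(u)\,\tilde N(\dd x))$ coming from the marks to yield exactly the same ratio above, proving the identification.

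The only genuinely delicate point is bookkeeping: one must make sure the tilting factor $\psi(u)^{\tilde N(\X)}$ that enters $f_{\Phi'}$ precisely cancels the $\psi(u)$ appearing in the denominator of $H'$ when computing the Laplace functional of $\tilde\eta$, so that the normalizing constant of $f_{\Phi'}$ matches the denominator of the posterior Laplace functional of $\mutilde'$. This is a purely algebraic check; once carried out, uniqueness of Laplace functionals (on bounded nonnegative $f$ of bounded support) together with the independence of marks and locations yields the claimed distributional identity.
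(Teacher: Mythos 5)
Your argument is correct and follows essentially the same route as the paper's proof: apply Lemma \ref{lemma:laplace_marked} to reduce \eqref{eq:Laplace_mu_post} to a ratio of exponential functionals of $\Phi^!_{\bm y^*}$, use Proposition \ref{prop:Gibbs_density_reduced_Palm} to push everything onto the reference Poisson process $\tilde N$, and then identify the resulting expression with the Laplace functional of a Gibbs process having the stated tilted density and marks from the exponentially tilted $H'$. The only cosmetic difference is that the paper rewrites the posterior Laplace functional directly into the candidate form, whereas you also compute the candidate's Laplace functional and match; the key lemmas, the cancellation of the normalizing constant, and the final bookkeeping with $\psi(u)^{\tilde N(\X)}$ are the same.
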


\begin{proof}
The random measure $\tilde{\mu}^\prime $ in Theorem~\ref{teo:post} has Laplace functional \eqref{eq:Laplace_mu_post}. In order to characterise its distribution, we first evaluate the numerator in \eqref{eq:Laplace_mu_post}.
From Lemma \ref{lemma:laplace_marked}, we have
\[
\E   \left[ \exp \left\{ - \int_\X (f(z) + u) \mutilde^!_{\bm y^*}(\dd z)\right\}\right] = \E \left[  \prod_{j \geq 1 }
     \int_{\R_+}  \e^{- s (f (X_j)+u ) } H (\dd s)\right]
\]
where the $X_j$'s are the support points of $\mutilde^!_{\bm y^*}$.
We now exploit Proposition \ref{prop:Gibbs_density_reduced_Palm} to evaluate the last expected value. Indeed, by virtue of this proposition, $\Phi_{\textbf{y}^*}^{!}$ is again a Gibbs point process, with density with respect to the Poisson process $\tilde N$, given by
\[
f_{\Phi_{\textbf{y}^*}^{!}}(\nu) = \frac{f_\Phi(\nu + \sum_{j=1}^k \delta_{y_j^*})}{\E_{\tilde N}\left[ f_\Phi(\tilde N + \sum_{j=1}^k \delta_{y_j^*}) \right]}, \quad \nu \in \M(\X).
\]
As a consequence, we have:
\begin{align*}
     & \E   \left[ \exp \left\{ - \int_\X (f(z) + u) \mutilde^!_{\bm y^*}(\dd z)\right\}\right] =   \E \left[ \exp \left\{  
    \sum_{j \geq 1} \log \Big( \int_{\R_+}  \e^{- s (f (X_j)+u ) } H (\dd s) \Big) \right\}\right]\\
    & \qquad = \E \left[ \exp \left\{  
    \int_\X \log \Big( \int_{\R_+}  \e^{- s (f (x)+u ) } H (\dd s) \Big) \Phi_{\textbf{y}^*}^{!} (\dd x) \right\}\right]\\
    &  \qquad = \frac{ \E_{\tilde N} \left[ \exp \left\{  
    \int_\X \log \Big( \int_{\R_+}  \e^{- s (f (x)+u ) } H (\dd s) \Big) \tilde N (\dd x) \right\}
     f_\Phi(\tilde N + \sum_{j=1}^k \delta_{y_j^*})\right]
     }{
     \E_{\tilde N}\left[ f_\Phi(\tilde N + \sum_{j=1}^k \delta_{y_j^*})\right]}.
\end{align*}
The previous expression corresponds to the numerator in \eqref{eq:Laplace_mu_post}, while the denominator follows by considering the function $f=0$ in the expression above. Hence 
\begin{equation}
    \label{eq:Laplce_gibbs_mu}
    \begin{split}
     &\E\left[\exp \int_\X - f(z) \tilde{\mu}^\prime(\dd z) \right] \\
     & \qquad =
     \frac{\E_{\tilde N} \left[ \exp \left\{  
    \int_\X \log \Big( \int_{\R_+}  \e^{- s (f (x)+u ) } H (\dd s) \Big) \tilde N (\dd x) \right\}
     f_\Phi(\tilde N + \sum_{j=1}^k \delta_{y_j^*})\right]}{\E_{\tilde N} \left[ \exp \left\{  
    \int_\X \log \Big( \int_{\R_+} \e^{- s u} H (\dd s) \Big) \tilde N (\dd x) \right\}
     f_\Phi(\tilde N + \sum_{j=1}^k \delta_{y_j^*})\right]}.
    \end{split}
\end{equation}
The last expression in \eqref{eq:Laplce_gibbs_mu} may be rewritten as follows
\[
\E_{\tilde N} \left[  \exp \left\{ \int_\X  \log \Big( \int_{\R_+}  \e^{-s f(x)}  H' (\dd s) \Big)  \tilde N (\dd x) \right\} f_{\Phi'} (\tilde N) \right] 
\]
where $f_{\Phi'}$ is a density with respect to a Poisson process $\tilde N$ defined as 
\[
f_{\Phi'} (\nu ) :=  \frac{\exp\left\{ \int_\X  \log \Big( \int_{\R_+}  \e^{-su } H (\dd s ) \Big)  \nu (\dd x ) \right\} f_{\Phi} (\nu +  \sum_{j=1}^k \delta_{y_j^*} ) }{\E_{\tilde N} \left[ \exp\left\{ \int_\X  \log \Big( \int_{\R_+}  \e^{-su } H (\dd s ) \Big) \tilde N (\dd x) \right\}  f_{\Phi} (\tilde N +  \sum_{j=1}^k \delta_{y_j^*} ) \right]}
\]
and $H'$ is a new measure on the positive real line $\R_+$ defined as 
\[
H'(\dd s) :=  \frac{\e^{-su } H (\dd s)}{\int_{\R^+}  \e^{-su}  H (\dd s)}
\]
which is an exponential tilting of $H$. As a consequence, we can conclude that ${\mutilde}^\prime$ is a random measure that can be represented as follows
\[
{\mutilde}^\prime(A) \stackrel{d}{=} \int_{A \times \R_+}  s \Psi' (\dd x \, \dd s), \; \quad
\Psi' := \sum_{j \geq 1} \delta_{({X}_j', {S}_j')}\qquad A\in \calX
\]
and $\Psi'$ is obtained by marking $\Phi^\prime$ with i.i.d. marks having distribution $H'$.
\end{proof}

\begin{theorem}\label{teo:marg_Gibbs}

If $\Phi$ is a Gibbs point process with density $f_{\Phi}$ with respect to the Poisson process $\tilde N$,
the marginal distribution in Theorem~\ref{teo:marg} conditionally to $U_n = u$ is proportional to
\[
       \prod_{j=1}^k \kappa(u, n_j) \omega(\dd y^*_j) \E\left[\psi(u)^{\tilde N(\X )} f_{\Phi}(\tilde N + \sum_{j=1}^k \delta_{y^*_j}) \right].
\]
    
\end{theorem}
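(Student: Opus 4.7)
The plan is to specialize Theorem \ref{teo:marg} to the Gibbs case by combining (i) the explicit form of the $k$-th moment measure given in Proposition \ref{prop:mean_gibbs} with (ii) the explicit form of the reduced Palm distribution in Proposition \ref{prop:Gibbs_density_reduced_Palm}, after rewriting the Laplace-type expectation in terms of the unmarked point process via Lemma \ref{lemma:laplace_marked}. The final expression should drop out by cancellation of the normalizing factor $\E_{\tilde N}[f_\Phi(\tilde N + \sum_{j=1}^k \delta_{y^*_j})]$ coming from the Palm density against the same factor appearing in $M_{\Phi^k}$.

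First, I would recall from Theorem \ref{teo:marg} that, conditional on $U_n = u$, the joint distribution of $\bm Y^*$ and the partition is proportional to
\[
\E\!\left[e^{-\int_\X u\, \mutilde^!_{\bm y^*}(\dd z)}\right] \prod_{j=1}^k \kappa(u, n_j)\, M_{\Phi^k}(\dd \bm y^*).
\]
Next, by Lemma \ref{lemma:laplace_marked} applied with the constant function $f(x)\equiv u$, and using that the marks of $\Psi^!_{\bm y^*}$ are i.i.d.\ with distribution $H$ (Lemma \ref{prop:marked_palm}), one obtains
\[
\E\!\left[e^{-\int_\X u\, \mutilde^!_{\bm y^*}(\dd z)}\right] = \E\!\left[\exp\!\left(\int_\X \log\psi(u)\, \Phi^!_{\bm y^*}(\dd x)\right)\right],
\]
where $\psi(u)=\int_{\R_+} e^{-su}H(\dd s)$.

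The key step is then to rewrite this expectation against the law of $\tilde N$. Invoking Proposition \ref{prop:Gibbs_density_reduced_Palm}, the reduced Palm version $\Phi^!_{\bm y^*}$ is itself of Gibbs type with density
\[
f_{\Phi^!_{\bm y^*}}(\nu) = \frac{f_\Phi\!\left(\nu + \sum_{j=1}^k \delta_{y^*_j}\right)}{\E_{\tilde N}\!\left[f_\Phi\!\left(\tilde N + \sum_{j=1}^k \delta_{y^*_j}\right)\right]}
\]
with respect to $\tilde N$, so
\[
\E\!\left[e^{-\int_\X u\, \mutilde^!_{\bm y^*}(\dd z)}\right] = \frac{\E_{\tilde N}\!\left[\exp\!\left(\int_\X \log\psi(u)\, \tilde N(\dd x)\right) f_\Phi\!\left(\tilde N + \sum_{j=1}^k \delta_{y^*_j}\right)\right]}{\E_{\tilde N}\!\left[f_\Phi\!\left(\tilde N + \sum_{j=1}^k \delta_{y^*_j}\right)\right]}.
\]
Finally, I would substitute this together with the expression $M_{\Phi^k}(\dd \bm y^*) = \E_{\tilde N}[f_\Phi(\tilde N + \sum_{j=1}^k \delta_{y^*_j})]\prod_{j=1}^k \omega(\dd y^*_j)$ from Proposition \ref{prop:mean_gibbs}: the denominator in the display above cancels exactly against the expectation appearing in $M_{\Phi^k}$, leaving
\[
\prod_{j=1}^k \kappa(u, n_j)\, \omega(\dd y^*_j)\; \E_{\tilde N}\!\left[\exp\!\left(\int_\X \log\psi(u)\, \tilde N(\dd x)\right) f_\Phi\!\left(\tilde N + \sum_{j=1}^k \delta_{y^*_j}\right)\right],
\]
which is the claimed expression.

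The manipulation is essentially algebraic once the three ingredients (Theorem \ref{teo:marg}, Lemma \ref{lemma:laplace_marked}, and the Palm/moment-measure expressions for Gibbs processes) are lined up; the only subtlety is the bookkeeping of the two occurrences of $\E_{\tilde N}[f_\Phi(\tilde N + \sum_{j=1}^k \delta_{y^*_j})]$ (one from the Palm density, one from $M_{\Phi^k}$) so that the cancellation is transparent. No genuine obstacle is expected.
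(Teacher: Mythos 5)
Your proof is correct and follows essentially the same route as the paper: specialize Theorem \ref{teo:marg} via Lemma \ref{lemma:laplace_marked}, then substitute the explicit Gibbs forms of the reduced Palm density (Proposition \ref{prop:Gibbs_density_reduced_Palm}) and the $k$-th moment measure (Proposition \ref{prop:mean_gibbs}) so that the two occurrences of $\E_{\tilde N}\bigl[f_\Phi(\tilde N + \sum_{j=1}^k \delta_{y^*_j})\bigr]$ cancel. Your write-up is if anything slightly more explicit about the use of Lemma \ref{lemma:laplace_marked} and the cancellation bookkeeping than the paper's terse computation, but the argument is the same.
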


\begin{proof}
From \Cref{teo:marg} and Propositions \ref{prop:mean_gibbs} and \ref{prop:Gibbs_density_reduced_Palm} we have that
\begin{align*}
    \prob(\bm Y \in \dd \bm y \mid U_n = u) &\propto \prod_{j=1}^k \kappa(u, n_j) \E\left[\e^{-\int_\X u \mutilde_{\bm y^*}^!(\dd x)}\right] M_{\Phi^{(k)}}(\dd \bm y^*) \\
    &= \prod_{j=1}^k \kappa(u, n_j) \omega(\dd y^*_j) \E_{\tilde N}\left[\exp\left(\int_\X \log \psi(u) \tilde N(\dd x)\right) f_{\Phi^!_{\bm{y^*}}}(\tilde N) \right] \E\left[f_{\Phi}(\tilde N + \sum_{j=1}^k \delta_{y^*_j}) \right] \\
    &= \prod_{j=1}^k \kappa(u, n_j) \omega(\dd y^*_j) \E_{\tilde N}\left[\exp\left(\int_\X \log \psi(u) \tilde N(\dd x)\right) f_{\Phi}(\tilde N + \sum_{j=1}^k \delta_{y^*_j}) \right]
\end{align*}
and the proof follows.
\end{proof}

Focusing now on the predictive distribution in Theorem~\ref{teo:pred_conditional}, it is clear that $M_{\Phi^{(k)}} \ll P_0^k$, where $P_0$ is obtained by normalising the intensity measure $\omega$.
In most applications, $\omega$ is the Lebesgue measure on a compact set.
The predictive distribution in Theorem~\ref{teo:pred_conditional} boils down to
\begin{multline*}
\label{eq:pred_gibbs}
   \prob (Y_{n+1} \in A \mid  \bm{Y} = \bm y, U_n=u) \propto \sum_{j=1}^k \frac{\kappa(u, n_j + 1)}{\kappa(u, n_j)} \delta_{y_j^*} (A)\\
     + \int_A \kappa(u, 1) \frac{\E\left[\psi^{\tilde N(\X)}(u) g_{\Phi}(\tilde N + \sum_{j=1}^k \delta_{y^*_j} + \delta_y)\right]}{\E\left[\psi^{\tilde N(\X)}(u) g_{\Phi}(\tilde N + \sum_{j=1}^k \delta_{y^*_j})\right]} \omega(\dd y).
\end{multline*}
The last term involves two expected values with respect to the Poisson process $\tilde N$, which can be easily approximated via Monte Carlo integration.

Finally, we remark that when Gibbs point processes are used as mixing measures in Bayesian mixture models, the posterior inference is unaffected by the (intractable) normalising constant if the hyper-parameters appearing in the density are fixed. In the opposite case, updating those hyper-parameters is a ``doubly intractable'' problem. See, e.g., \cite{beraha21} for a solution.

\section{Details about the DPP Examples}
\label{app:ddp}

Here we provide proof of the more general definition of a DPP we have introduced in Section~\ref{sec:back_on_PP} (see Example~\ref{ex:dpp_def}). The expression generalizes the one given by \cite{Lav15}, which deals with the case of $\X$ compact set in $\R^q$ and $\omega$ the Lebesgue measure. Then we explicitly detail the expression of $\mu'$ in Theorem~\ref{teo:post}. See also Example \ref{ex:dpp1}.

\begin{theorem}[DPP densities]\label{teo:dpp_dens}
Let $\omega$ be a finite measure on $\X$.  Consider a complex-valued covariance function $K: \X \times \X \to \mathbb C$, such that $\int_\X K(x, x) \omega(\dd x) < +\infty$, with spectral representation
\[
    K(x, y) = \sum_{h \geq 1} \lambda_h \varphi_h(x) \overline{\varphi_h(y)}, \qquad x, y \in \X
\]
where $(\varphi_h)_{h \geq 1}$ form an orthonormal basis for the space $L^2(\X; \omega)$ of complex-valued functions, $0 \leq \lambda_h < 1$ with $\sum_{h \geq 1} \lambda_h < +\infty$.
Then, the DPP $\Phi$ with kernel $K$ is absolutely continuous with respect to the Poisson process on $\X$ with intensity measure $\omega$. Its density is given by 
\begin{equation}\label{eq:dpp_dens}
    f_{\Phi}(\nu)  = \e^{\omega(\X) - D} \det \left\{C(x, y) \right\}_{x, y \in \nu},
\end{equation}
where $D:=-\sum_{h \geq 1} \log(1 - \lambda_h)$ and $C(x, y) := \sum_{h \geq 1} \frac{\lambda_h}{1 - \lambda_h} \varphi_h(x) \overline{\varphi_h(y)}$ for $x, y \in \X$.
\end{theorem}
\begin{proof}
    A direct, but rather cumbersome, proof can be achieved following the same lines of \cite{shirai2003random}.
    Here, we take a shorter but indirect approach.

    First, let us introduce the Janossy measure $J_k(\dd x_1 \cdots \dd x_k)$. For a finite point process $\Phi$, such a measure is the probability that $\Phi$ consists of exactly $k$ points located in infinitesimal neighborhoods of $\dd x_j$, $j=1, \ldots, k$. See Chapter 7 of \cite{DaVeJo1}.
    For a Poisson point process with intensity measure $\omega$,
    \begin{equation}\label{eq:jan_poi}
        J^{\omega}_k(\dd x_1 \cdots \dd x_k) = \e^{-\omega(\X)} \prod_{j=1}^k \omega(\dd x_j).
    \end{equation}

    Given two point processes, say $\Phi$ and $\Phi^\prime$, with associated Janossy measures $J_k$ and $J_k^\prime$, then $\Phi \ll \Phi^\prime$ if and only if $J_k \ll J_k^\prime$.
    The Janossy measure of a general DPP $\Phi$ is given by \citep[see, e.g.,][]{georgii2005conditional}
    \begin{equation}\label{eq:jan_dpp}
        J_k(\dd x_1 \cdots \dd x_k) = \det(\mathcal I - \mathcal K) \det \left\{L(x_i, x_j) \right\}_{i, j=1}^k \prod_{j=1}^k \omega(\dd x_j),
    \end{equation}
    where $\mathcal I$ is the identity operator, $\mathcal K$ is the integral operator $\mathcal K: L^2(\X; \omega) \rightarrow L^2(\X; \omega)$ defined by $x \mapsto \mathcal K f(x) = \int_\X K(x, y) f(y) \omega(\dd y)$, and $L$ is the kernel associated with the operator $\mathcal L := (\mathcal I - \mathcal K)^{-1} \mathcal K$.  The determinant  $\det(\mathcal I - \mathcal K)$ is to be intended as a Fredholm determinant \citep[see, e.g., Section 3.4 in][for a basic treatment on Fredholm determinants]{anderson2010introduction}.
     Comparing \eqref{eq:jan_dpp} and \eqref{eq:jan_poi}, it is clear that the DPP with kernel $K$ is absolutely continuous w.r.t. $\mathrm{PP}(\omega)$.
    The proof is now concluded by noting that the collection of the ratios $J_k / J_k^\omega$ defines the density of $\Phi$ with respect to the Poisson point process with intensity $\omega$ \citep[][Chapter 7]{DaVeJo1}, which is exactly
    \[
        f_{\Phi}(\nu) = \e^{\omega(\X)}\det(\mathcal I - \mathcal K) \det \left\{L(x, y) \right\}_{x, y \in \nu},
    \]
    which matches \eqref{eq:dpp_dens} as shown below.

    From the Mercer decomposition of $K$, we have that the integral operator $\mathcal K$ has eigenvalues $(\lambda_h)_{h \geq 1}$ and eigenfunctions $(\varphi_h)_{h \geq 1}$. Simple algebra shows that $\mathcal I - \mathcal K$ has the same eigenfunctions and the eigenvalues are $(1 - \lambda_h)_{h \geq 1}$. The properties of the Fredholm determinant thus entail
    \[
        \det(\mathcal I - \mathcal K) = \prod_{h \geq 1} 1 - \lambda_h.
    \]
    Finally, basic algebra shows that if $\mathcal K \varphi = \lambda \phi$, then 
    \[
        \mathcal L\varphi := (\mathcal I - \mathcal K)^{-1} \mathcal K \varphi = \frac{\lambda}{1 -\lambda} \varphi
    \]
    which shows $ \det \left\{L(x, y) \right\}_{x, y \in \nu} = \left\{C(x, y) \right\}_{x, y \in \nu}$ for $C$ defined as in the statement of the theorem (observe that the series $\lambda_h / (1 - \lambda_h)$ clearly converges since the series of the $\lambda_h$'s does). 
\end{proof}

\begin{theorem}\label{teo:post_dpp}
    Assume that $\Phi$ is a DPP with kernel $K$. Moreover, assume that all of its eigenvalues $\lambda_j$ in \eqref{eq:k_mercer} are strictly smaller than one. Then, the random measure $\mutilde^\prime$ in \Cref{teo:post} is equal to the distribution of the random measure defined by
    \[
   \tilde \eta(A) :=  \int_{ A \times \R_+ } s \Psi'(\dd x \, \dd s ) = \sum_{j \geq 1} S^\prime_j \delta_{X^\prime_j}(A), \qquad A \in \calX
    \]
where $\Psi':=\sum_{j \geq 1} \delta_{(X_j' , S_j')}$ is a marked point process whose unmarked point process $\Phi' := \sum_{j \geq 1} \delta_{X_j '}$ is a DPP with density with respect to $\mathrm{PP}(\omega)$ given by $f_\phi(\nu) \propto \det[ C^\prime(x_i, x_j)]_{(x_i, x_j) \in \nu}$, where
\[
    C^\prime(x, y) = \psi(u) \left[C(x, y) - \sum_{i, j = 1}^k \left(C_{\bm y^*}^{-1} \right)_{i, j} C(x, y^*_i) C(y, y^*_j)\right],
\] 
and the marks $S_j'$ are i.i.d. with distribution \eqref{eq:h_tilt}.
\end{theorem}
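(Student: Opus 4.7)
The plan is to specialize the Laplace functional characterization \eqref{eq:Laplace_mu_post} of $\mutilde'$ step by step, analogously to the Gibbs case (\Cref{teo:post_Gibbs}), by exploiting the explicit unnormalized density of a DPP with all eigenvalues strictly below one (see Example \ref{sec:dpp}) together with a Schur complement identity.

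First, applying Lemma \ref{lemma:laplace_marked} to both the numerator and denominator of \eqref{eq:Laplace_mu_post} yields
\[
    \E\left[\exp\left(-\int_\X f(z)\,\mutilde'(\dd z)\right)\right]
    =
    \frac{\E\left[\exp\left(\int_\X \log \psi(f(x)+u)\,\Phi^!_{\bm y^*}(\dd x)\right)\right]}{\E\left[\exp\left(\int_\X \log \psi(u)\,\Phi^!_{\bm y^*}(\dd x)\right)\right]}.
\]
Next I would identify $\Phi^!_{\bm y^*}$ explicitly. Since the DPP $\Phi$ has density $f_\Phi(\nu) \propto \det\{C(x,y)\}_{x,y\in\nu}$ with respect to the unit-intensity Poisson process on $R$, Proposition \ref{prop:Gibbs_density_reduced_Palm} (which applies to any process specified by a Poisson density, hence to DPPs) gives
\[
    f_{\Phi^!_{\bm y^*}}(\nu) \propto \det\{C(x,y)\}_{x,y\in \nu\cup \bm y^*}.
\]
Partitioning this matrix with the $\bm y^*$-block first and applying the Schur determinant identity shows that this density is proportional to $\det\{\tilde C(x,y)\}_{x,y\in\nu}$, where
\[
    \tilde C(x,y) \;=\; C(x,y) - \sum_{i,j=1}^k \bigl(C_{\bm y^*}^{-1}\bigr)_{ij}\,C(x,y^*_i)\,C(y^*_j,y).
\]
Hence $\Phi^!_{\bm y^*}$ is itself a DPP (with kernel derived from $\tilde C$ via the spectral construction in Example \ref{sec:dpp}), which is the content one needs for the subsequent exponential tilting.

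Now I would use the algebraic decomposition $\psi(f(x)+u) = \psi(u)\,\psi'(f(x))$ where $\psi'$ is the Laplace transform of the tilted density $H'$ in \eqref{eq:h_tilt}; this separates the dependence on $f$ and gives
\[
    \prod_{x\in\Phi^!_{\bm y^*}}\psi(f(x)+u) \;=\; \psi(u)^{|\Phi^!_{\bm y^*}|}\prod_{x\in\Phi^!_{\bm y^*}}\psi'(f(x)).
\]
Substituting back, the ratio defining the Laplace functional of $\mutilde'$ becomes the ratio of expectations of $\psi(u)^{|\Phi^!_{\bm y^*}|}\prod_x \psi'(f(x))$ over $\psi(u)^{|\Phi^!_{\bm y^*}|}$ under the DPP $\Phi^!_{\bm y^*}$. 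Multiplying the density $\det\{\tilde C(x,y)\}_{x,y\in\nu}$ by the factor $\psi(u)^{|\nu|}$ and pulling $\psi(u)^{1/2}$ out of each row and column of the determinant produces a new DPP $\Phi'$ with density proportional to $\det\{C'(x,y)\}_{x,y\in\nu}$ with $C'=\psi(u)\tilde C$, which is exactly the stated kernel. The normalizing constants cancel between numerator and denominator, leaving
\[
    \E\left[\exp\left(-\int_\X f(z)\,\mutilde'(\dd z)\right)\right]
    \;=\;
    \E_{\Phi'}\left[\exp\left(\int_\X \log \psi'(f(x))\,\Phi'(\dd x)\right)\right].
\]
A final application of Lemma \ref{lemma:laplace_marked} in reverse identifies the right-hand side as the Laplace functional of the random measure obtained by marking $\Phi'$ with i.i.d.\ jumps drawn from $H'$, which concludes the proof.

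\textbf{Main obstacle.} The delicate point is the Schur-complement reduction that turns $\det\{C(x,y)\}_{x,y\in\nu\cup\bm y^*}$ into $\det\{\tilde C(x,y)\}_{x,y\in\nu}$: one must carefully check that the conditioning through the Palm kernel yields a bona fide DPP-type density (i.e.\ that $\tilde C$ is admissible as a DPP kernel) and that the spectral assumption $\lambda_h<1$ is propagated to both $\tilde C$ and, after tilting, to $C'$ so that all manipulations are well defined. Once this algebraic step is secured, the rest of the argument parallels the Gibbs case.
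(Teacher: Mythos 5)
Your proof is correct and takes essentially the same route as the paper, which applies \Cref{teo:post_Gibbs} directly (since a DPP with eigenvalues strictly below one has a density with respect to the unit-intensity Poisson process) to get the unnormalized density $q_{\Phi'}(\nu) = \psi(u)^{n_\nu} f_\Phi(\nu + \sum_{j=1}^k \delta_{y_j^*})$ and then applies the Schur determinant identity to the bordered matrix, whereas you re-derive the same pipeline inline (Lemma \ref{lemma:laplace_marked}, then Proposition \ref{prop:Gibbs_density_reduced_Palm}, then Schur, then the $\psi(u)^{|\nu|}$ tilting). The ordering of Schur-then-tilt versus tilt-then-Schur is the only cosmetic difference; both are equivalent and your factorization $\psi(f(x)+u)=\psi(u)\,\psi'(f(x))$ is exactly the mechanism underlying Theorem \ref{teo:post_Gibbs}.
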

\begin{proof}
Under our assumptions, the DPP $\Phi$ has a density with respect to the Poisson process. Then, it is possible to apply Theorem~\ref{teo:post_Gibbs}, so that the point process $\Phi^\prime$ has unnormalized density
\[
    q_{\Phi^\prime}(\nu) = \psi(u)^{n_\nu} f_{\Phi} \left( \nu +  \sum_{j=1}^k \delta_{y_j^*} \right),
\]
where $n_\nu = \nu(\X)$. If $\nu := \sum_{j=1}^{n_\nu} \delta_{x_j}$, then the density $f_\Phi$ equals the determinant of the matrix
\[
\left[
    \begin{array}{c c c | c c c}
    C(y^*_1, y^*_1) & \cdots & C(y^*_1, y^*_k) & C(y^*_1, x_1) & \cdots & C(y^*_1, x_{n_\nu}) \\
    \vdots & & \vdots & \vdots & & \vdots \\
    C(y^*_k, y^*_1) & \cdots & C(y^*_k, y^*_k) & C(y^*_k, x_1) & \cdots & C(y^*_k, x_{n_\nu}) \\\hline
    C(x_1, y^*_1) & \cdots & C(x_1, y^*_k) & C(x_1, x_1) & \cdots & C(x_1, x_{n_\nu}) \\
    \vdots & & \vdots & \vdots & & \vdots \\
    C(x_{n_\nu}, y^*_1) & \cdots & C(x_{n_\nu}, y^*_k) & C(x_{n_\nu}, x_1) & \cdots & C(x_{n_\nu}, x_{n_\nu})
    \end{array}
\right]
\]
Let $C_{\bm y^*}$ denote the upper left block, $C_{x y}$ the bottom left one and $C_{x x}$ the bottom right one. Then, thanks to Schur's determinant identity and ignoring the terms that do not depend on $\nu$, we have
\[
    q_{\Phi^\prime}(\nu) = \psi(u)^{n_\nu} \det(C_{x x} - C_{x y} C_{\bm y^*}^{-1} C_{xy}^T).
\]
If we define
\[
    C^\prime(x, y) = \psi(u) \left[C(x, y) - \sum_{i, j = 1}^k \left(C_{\bm y^*}^{-1} \right)_{i, j} C(x, y^*_i) C(y, y^*_j)\right],
\] 
then it is easy to see that $q_{\Phi^\prime}(\nu) = \det[C^\prime(x_i, x_j)]_{(x_i, x_j) \in \nu}$. Therefore, we can conclude that $\Phi^\prime$ is a DPP with parameters reported in the statement.
\end{proof}

\section{Details about the shot-noise Cox process example}\label{app:sncp}

In this appendix we give proofs of the main theoretical results for the shot-noise Cox process, and also report auxiliary properties.

\subsection{Auxiliary Results}

\begin{lemma}\label{lemma:moment_cox}
Let $\Phi$ be a shot-noise Cox process with kernel $k_\alpha$ and base intensity $\omega$.
Define $\eta(x_1, \ldots, x_{l}): = \int \prod_{i=1}^l k_{\alpha}(x_i - v) \omega(\dd v)$.
Then
\[
    M_{\Phi^{(k)}}(\dd x_1 \cdots \dd x_k) = \gamma^k \sum_{j=1}^k \sum_{G_1, \ldots, G_j \in (*)} \prod_{\ell=1}^j \eta(\bm x_{G_\ell})  \dd x_1 \cdots \dd x_k,
\]
 where $(*)$ denotes all the partitions of $k$ elements in $j$ groups and $\bm x_{G_\ell} = (x_i : i \in G_\ell)$.
\end{lemma}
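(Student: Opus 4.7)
}

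The plan is to condition on the directing Poisson process $\Lambda$, compute the conditional factorial moment measure using that $\Phi\mid\Lambda$ is Poisson, and then reduce the computation to a classical partition identity for integrals with respect to a Poisson random measure.

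First I would note that, since $\Phi$ is simple, for distinct $x_1,\ldots,x_k$ the $k$-th moment measure $M_{\Phi^{k}}$ agrees with the $k$-th factorial moment measure $M_{\Phi^{(k)}}$, and the statement is an identity of densities on the off-diagonal, so we may restrict attention to the factorial moment measure. Conditionally on $\Lambda$, $\Phi$ is Poisson with intensity $\omega_\Lambda(\dd x)=\gamma\bigl(\int_\X k_\alpha(x-v)\Lambda(\dd v)\bigr)\dd x$, and the $k$-th factorial moment measure of a Poisson process is the product of its intensity measures. Hence
\[
    M_{\Phi^{(k)}}(\dd x_1\cdots\dd x_k)
    =\gamma^k\,\E\!\left[\prod_{i=1}^k\int_\X k_\alpha(x_i-v)\,\Lambda(\dd v)\right]\dd x_1\cdots\dd x_k.
\]
So the lemma reduces to computing $\E\bigl[\prod_{i=1}^k \Lambda(f_i)\bigr]$ with $f_i(v)=k_\alpha(x_i-v)$, and recognising that this equals $\sum_{\pi}\prod_{B\in\pi}\eta(\bm x_B)$, where the sum runs over all set partitions $\pi$ of $\{1,\ldots,k\}$.

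Next I would establish the partition formula
\[
    \E\!\left[\prod_{i=1}^k\Lambda(f_i)\right]
    =\sum_{\pi\in\mathcal P_k}\prod_{B\in\pi}\int_\X\prod_{i\in B}f_i(v)\,\omega(\dd v)
\]
for any Poisson process $\Lambda$ with intensity $\omega$ and any non-negative $f_1,\ldots,f_k$. The cleanest route is the diagonal decomposition of the product measure $\Lambda^{k}=\sum_{\pi\in\mathcal P_k}\Lambda^{(\pi)}$, where $\Lambda^{(\pi)}$ is the push-forward of the factorial measure $\Lambda^{(|\pi|)}$ along the map that repeats the $B$-th coordinate into the indices belonging to block $B$ of $\pi$. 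Integrating $\prod_i f_i(v_i)$ against each $\Lambda^{(\pi)}$ gives a product $\prod_B\int\prod_{i\in B}f_i(v)\,\Lambda(\dd v)$ evaluated on the factorial measure of $\Lambda$, and taking expectations, together with the identity $\E\bigl[\Lambda^{(j)}(\dd v_1\cdots\dd v_j)\bigr]=\omega(\dd v_1)\cdots\omega(\dd v_j)$ (Campbell's formula, or iterated application of the Slivnyak--Mecke Theorem as recalled in Appendix \ref{sec:app_palm}), yields the claimed partition sum. Grouping partitions by the number of blocks $j$ and substituting $f_i(v)=k_\alpha(x_i-v)$ so that $\int_\X\prod_{i\in B}f_i(v)\,\omega(\dd v)=\eta(\bm x_B)$, we obtain exactly the expression in the lemma.

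The only subtlety is the rigorous justification of the partition identity for moments of a Poisson random measure; this is a classical result but I would spell out, either by induction on $k$ using Slivnyak--Mecke to peel off one integration at a time (at each step one point either coincides with a previously chosen atom, contributing to a block already opened, or is a new atom, opening a new block), or by differentiating the Laplace functional $\E[\exp(-\Lambda(f))]=\exp\{-\int(1-e^{-f})\,\dd\omega\}$ in auxiliary variables to generate moments via the exponential formula, which again produces the sum over set partitions. The rest of the argument is routine bookkeeping of the combinatorial indexing used in the statement, namely writing $\sum_{\pi\in\mathcal P_k}=\sum_{j=1}^k\sum_{C_1,\ldots,C_j\in(*)}$.
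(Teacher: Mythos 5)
Your proposal is correct and follows essentially the same route as the paper's proof: condition on $\Lambda$, use that $\Phi\mid\Lambda$ is Poisson so its $k$-th (factorial) moment measure off the diagonal is the $k$-fold product of the conditional intensity, and then compute $\E\bigl[\prod_{i=1}^k\int_\X k_\alpha(x_i-v)\,\Lambda(\dd v)\bigr]$ by expressing the $k$-th moment measure of the Poisson process $\Lambda$ as a sum over set partitions. The paper writes that partition decomposition directly as a sum of measures concentrated on diagonals, while you phrase the same identity via the diagonal decomposition $\Lambda^k=\sum_\pi\Lambda^{(\pi)}$ together with Campbell's formula; these are two expressions of the same classical fact, so the arguments coincide.
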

\begin{remark}
 Alternatively, we can introduce a set of \emph{allocation variables} $\bm t = (t_1, \ldots, t_k)$ such that $t_i = \ell$ if and only if $i \in G_\ell$ and write 
 \[
    M_{\Phi^{(k)}}(\dd x_1 \cdots \dd x_k) = \gamma^k \sum_{\bm t \in (\bullet)} \prod_{\ell=1}^{|\bm t|} \eta(\bm x_{\ell})  \dd x_1 \cdots \dd x_k,
 \]
 where $(\bullet)$ stands for the set of all possible indicator variables describing the partition of $k$ objects, $|\bm t|$ is the number of unique values in $\bm t$, and $\bm x_\ell = (x_i : t_i = \ell)$.
\end{remark}
\begin{proof}
        By Campbell's theorem:
\begin{align*}
   M_{\Phi^{(k)}}(\dd \bm x) &= \E\left[ \prod_{i=1}^k \E\left[ \Phi(\dd x_i)\mid \Lambda \right] \right] = \gamma^k \E\left[\int_{\X^k} \prod_{i=1}^k k_\alpha(x_i - v_i) \Lambda(\dd v_1) \cdots \Lambda(\dd v_k)\right] \dd x_1 \cdots \dd x_k \\
    &= \gamma^k \int_{\X^k} \prod_{i=1}^k k_\alpha(x_i - v_i) M_{\Lambda^k}(\dd v_1 \cdots \dd v_k) \dd x_1 \cdots \dd x_k
\end{align*}
where $M_{\Lambda^k}$ is the $k$-th moment measure of the Poisson point process $\Lambda$, which can be expressed as
\[
   M_{\Lambda^k}(\dd v_1 \cdots \dd v_k) = \sum_{j=1}^k \sum_{G_1, \ldots G_j \in (*)} \prod_{l = 1}^j \left[ \omega(\dd v_{G_{l1}}) \prod_{m \in G_l} \delta_{v_{G_{l1}}}(v_{G_{lm}}) \right]
\]
where $(*)$ denotes all the partitions of $k$ elements in $j$ groups. 
Then
\[
 M_{\Phi^{(k)}}(\dd \bm x) = \gamma^k \sum_{j=1}^k \sum_{G_1, \ldots G_j \in (*)} \int_{\X^k} \prod_{i=1}^k k_\alpha(x_i - v_i) \prod_{l = 1}^j \left[ \omega(\dd v_{G_{l1}}) \prod_{m \in G_l} \delta_{v_{G_{l1}}}(v_{G_{lm}}) \right] \dd x_1 \cdots \dd x_k
\]
observe that the integral over $\X^k$ has a nice interpretation of the product of marginals of models of the kind
$\bm x_{G_l} := (x_i: i \in G_l) \mid v_{G_{l1}} \iid k_{\alpha}(\cdot \mid v_{G_{l1}})$, $v_{G_{l1}} \sim \omega$. 
Denoting by $\eta(\bm x_{G_l})$ the marginal distribution for the model, the integral can be expressed as
$\prod_{l=1}^j \eta(\bm x_{G_l})$ leading to
\[
    M_{\Phi^{(k)}}(\dd \bm x) = \gamma^k \sum_{j=1}^k \sum_{G_1, \ldots G_j \in (*)} \prod_{l=1}^j \eta(\bm x_{G_l})  \dd x_1 \cdots \dd x_k.
\]
\end{proof}

\begin{lemma}\label{lem:palm_cox}
Let $\Phi$ be a shot-noise Cox process. Then the reduced Palm distribution of $\Phi$ at $\bm x = (x_1, \ldots, x_k)$ 
can be described as a mixture as follows. Introduce random indicator variables $T^!_1, \ldots, T^!_k$ representing a partition of $[k]= \{ 1, \ldots  , k\}$, and define $\bm x_\ell = (x_j \colon T^!_j = \ell)$. Then 
\[
    \Phi^!_{\bm x} \mid \bm T^!  \deq \Phi + \sum_{\ell=1}^{|\bm T^!|} \Phi_{\bm x_\ell},
\]
where
\begin{itemize}
    \item[(i)] $\Phi_{\bm x_\ell} \mid \zeta_{\bm x_\ell} \sim \mathrm{PP}\left(\gamma k_\alpha(z - \zeta_{\bm x_\ell}) \dd z\right)$ and $\zeta_{\bm x_\ell} \sim f_{\ell}(v)\dd v \propto \prod_{j \colon T^!_j = \ell} k_\alpha(x_j - v) \omega(\dd v)$;
    \item[(ii)] the processes $\Phi$ and $\Phi_{\bm x_\ell}$, $\ell = 1, \ldots, |\bm T^!|$ are (jointly) independent conditionally to $\bm T^!$;
    \item[(iii)] $\prob(\bm T^! = \bm t^!) \propto \prod_{\ell=1}^{ | \bm t^!|} \eta((x_j \colon t^!_j = \ell))$, where $\eta$ is defined in Lemma \ref{lemma:moment_cox}.
\end{itemize}
\end{lemma}
\begin{proof} 
We prove the result by induction, relying on Lemmas \ref{lem:superp_palm} and \ref{lem:superp_palm2}.
When $k=1$, \cite{Mo03Cox} proved that
\[
    \Phi^!_{x} \deq \Phi + \Phi_{\zeta_x}
\]
where $\Phi_{\zeta_x} \mid \zeta_x \sim \mathrm{PP}(\gamma k_\alpha(z - \zeta_x) \dd z)$ and $\zeta_x \sim f_{\zeta_x}(v) \dd v \propto k_\alpha(x - v) \omega(\dd v)$, which coincides with the statement of the theorem.

Let now $k=2$ and $\bm x = (x, y)$. By the Palm algebra \citep[Proposition 3.3.9 in][]{BaBlaKa}, we have, $\Phi^!_{(x, y)} = (\Phi^!_x)^!_y \deq (\Phi + \Phi_{\zeta_x})^!_y$. Then, an application of Lemma \ref{lem:superp_palm} yields
\begin{equation}\label{eq:palm_sncp2points}
    \Phi^!_{(x, y)} \deq \begin{cases}
        \Phi + \Phi_{\zeta_y} + \Phi_{\zeta_x} & \quad \text{with probability proportional to } M_{\Phi}(\dd y)\\
        \Phi + \Phi_{\zeta_{x, y}} & \quad \text{with probability proportional to } M_{\Phi_{\zeta_x}}(\dd y)
    \end{cases}
\end{equation}
where we observe that $M_{\Phi}(\dd y) = \gamma \eta(y) \dd y$ and $M_{\Phi_{\zeta_x}}(\dd y) = \gamma \eta(x, y)/\eta (x) \dd y$.
Furthermore, $\Phi_{\zeta_{x, y}} := (\Phi_{\zeta_x})^!_y$ has the following representation
\begin{equation}\label{eq:tmp}
    \begin{aligned}
    \Phi_{\zeta_{x, y}} \mid \zeta_{x, y} & \sim \mathrm{PP}(\gamma k_\alpha(z - \zeta_{x, y}) \dd v) \\
    \zeta_{x, y} & \sim f_{\zeta_{x, y}}(v) \dd v \propto k_\alpha(y - v) k_\alpha(x - v) \omega(\dd v).
\end{aligned}
\end{equation}
We will prove \eqref{eq:tmp} below. Having established $\Phi^!_{(x, y)}$ as in \eqref{eq:palm_sncp2points}, we reason by induction to obtain the statement of the theorem. 

Finally, to show \eqref{eq:tmp}, we exploit Proposition 3.2.1 in \cite{BaBlaKa}, which entails that $\Phi_{\zeta_{x, y}}$ is uniquely characterized by
\[
    \partder{t} L_{\Phi_{\zeta_x}}(f + t g)|_{t= 0} = -\int_{\X} g(y) \e^{- f(y)} L_{\Phi_{\zeta_{x, y}}}(f) M_{\Phi_{\zeta_x}}(\dd y),
\]
for all measurable $f, g: \X \rightarrow \R_+$ such that $g$ is $M_{\Phi_{\zeta_x}}$ integrable.

Then, writing $\Phi(f)$ in place of $\int_\X f(x) \Phi(\dd x)$, and applying Lebesgue's theorem, we get
\begin{align*}
    \partder{t} & L_{\Phi_{\zeta_x}}(f + t g) \Big|_{t= 0} =  \partder{t} \E\left[ \E\left[ \e^{- \Phi_{\zeta_x}(f + tg)}\mid \zeta_x\right] \right] \Big|_{t= 0}  = \E\left[ \E\left[ - \Phi_{\zeta_x}(g) \e^{- \Phi_{\zeta_x}(f)} \mid \zeta_x\right] \right].
\end{align*}
An application of the L\'evy-Khintchine representation yields
\begin{align*}
    \partder{t} & L_{\Phi_{\zeta_x}}(f + t g) \Big|_{t= 0} \\
    & = - \int_{\X \times \X} g(y) \e^{-f(y)} \exp\left\{- \int_\X (1 - \e^{- f(z)}) \gamma k_\alpha(z - v) \dd z \right\} \gamma k_\alpha(y - v) \frac{k_\alpha(x - v)}{\eta(x)} \omega(\dd v) \dd y
\end{align*}
and the proof of \eqref{eq:tmp} follows by recognizing the law of $\zeta_{x, y}$ in the integrand and the disintegration theorem \citep[Theorem 3.4 in][]{Kallenberg2021}.

Now observe that \eqref{eq:palm_sncp2points} can be equivalently expressed as the statement of the Lemma, augmenting the underlying probability space introducing $\bm T^! = (T^!_1, T^!_2)$ such that
\[
    \prob(\bm T^! = (1, 1)) \propto \eta(x, y), \qquad \prob(\bm T^! = (1, 2)) \propto \eta(x) \eta(y).
\]
So that
\[
    \Phi^!_{(x, y)} \mid \bm T^! \deq \Phi + \sum_{\ell=1}^{|\bm T^!|} \Phi_{\bm x_\ell},
\]
which verifies the lemma for $k=2$. The general case with $k \geq 3$ follows by induction.
\end{proof}

    \begin{lemma}\label{lemma:laplace_cox}
        Let $\Psi^\prime = \sum_{j \geq 1} \delta_{(X^\prime_j, S^\prime_j)}$ where $S^\prime \iid H^\prime$ and $\Phi^\prime = \sum_{j \geq 1} \delta_{X^\prime_j}$ is a shot-noise Cox point process with base intensity $\rho^\prime$. Let
        \[
            \mu^\prime(A) = \int_{A \times \R_+} s \Psi^\prime(\dd x \, \dd s),
        \]
        then for any $f \geq 0$
        \[
            \E \left[ \e^{- \int_\X f(x) \mu^\prime(\dd x)}\right] = \exp \left\{ - \int_\X 1 -  \exp \left( - \int_{\X} \gamma k_{\alpha}(x - y) \int_{\R_+} 1 - \e^{- s f(x)} H^\prime(\dd s) \dd x \right) \rho^\prime(\dd y)  \right\} .
        \]
    \end{lemma}
    \begin{proof}
        By Lemma \ref{lemma:laplace_marked} we have that
        \begin{align*}
            \E \left[ \e^{- \int_\X f(x) \mu^\prime(\dd x)}\right] = \E\left[\exp\left\{ \int_\X \log \left( \int_{\R_+} \e^{- s f(x)} H^\prime(\dd s) \right) \Phi^\prime(\dd x) \right\} \right].
        \end{align*}
        Then, by the tower property of the expected value and exploiting the L\'evy-Kintchine representation and Fubini's theorem, we have
        \begin{align*}
            \E \left[ \e^{- \int_\X f(x) \mu^\prime(\dd x)}\right] & = \E \left[ \E\left[\exp\left\{ \int_\X \log \left( \int_{\R_+} \e^{- s f(x)} H^\prime(\dd s) \right) \Phi^\prime(\dd x) \right\} \mid \Lambda \right] \right] \\
            &= \E \left[ \exp \left\{ - \int_{\X}\left( 1 - \int_{\R_+} \e^{- s f(x)} H^\prime(\dd s) \right)\gamma \int_\X k_{\alpha}(x - y) \Lambda(\dd y) \dd x \right\} \right] \\
            &= \E \left[ \exp \left\{ - \int_{\X} \int_{\X} \gamma k_{\alpha}(x - y) \int_{\R_+} \left(1 - \e^{- s f(x)} H^\prime(\dd s) \right)   \dd x \Lambda(\dd y)  \right\} \right] \\
            &= \exp \left\{ - \int_\X 1 -  \exp \left( - \int_{\X} \gamma k_{\alpha}(x - y) \int_{\R_+} 1 - \e^{- s f(x)} H^\prime(\dd s) \dd x \right) \rho^\prime(\dd y)  \right\}
        \end{align*}
        and the result follows.
    \end{proof}

\subsection{Main results}
\begin{theorem}\label{teo:post_cox}
Assume that $\Phi$ is a shot-noise Cox process with base intensity $\omega(\dd x)$. Then,  the random measure $\mutilde^\prime$ in \eqref{eq:mu_post} satisfies the following distributional equality
\[
    \mutilde^\prime \mid (\bm T^\prime = \bm t^\prime) \stackrel{d}{=}\mutilde_0 + \sum_{\ell = 1}^{|\bm t^\prime|} \mutilde_{{\bm y^*_\ell}},
\]
where:
\begin{itemize}
    \item[(i)] $\mutilde_0 = \sum_{j \geq 1} \tilde S_j \delta_{\tilde X_j}$, such that $\tilde S_j \iid H^\prime$ in \eqref{eq:h_tilt}, and $\Phi_0 = \sum_{j \geq 1} \delta_{\tilde X_j}$ is a shot-noise Cox process such that
    \[
        \Phi_0 \mid \Lambda_0 \sim \mathrm{PP}\left(\gamma \psi(u)  \int_{\X} k_\alpha(x - v) \Lambda_0(\dd v) \dd x \right), \quad  \Lambda_0 \sim \mathrm{PP}\left(  \e^{- \gamma (1-\psi(u))} \omega(\dd y) \right);
    \]
    \item[(ii)] $\bm T^\prime = (T^\prime_1, \ldots, T^\prime_k)$ are latent variables that identify a partition of $[k]$, $k$ the number of distinct values in $\bm y$, with distribution $\prob(\bm T^\prime = \bm t^\prime) \propto \e^{\gamma |\bm t^\prime|(\psi(u) - 1)} \prod_{\ell=1}^{|\bm t^\prime|} \eta((y^*_j \colon t^\prime_j = \ell))$, where $\eta$ is defined in Lemma \ref{lemma:moment_cox};
    
    \item[(iii)] $\bm y^*_\ell = (y^*_j \colon t^\prime_j = \ell)$ and $\mutilde_{{\bm y^*_\ell}} = \sum_{j \geq 1} \tilde S_{\ell, j} \delta_{\tilde X_{\ell, j}}$, such that $\tilde S_{\ell, j} \iid H^\prime$ in \eqref{eq:h_tilt} and 
    \[
        \Phi_{{\bm y^*_\ell}} := \sum_{j \geq 1} \delta_{\tilde X_{\ell, j}} \mid \zeta_{\bm y^*_\ell} \sim \mathrm{PP}(\gamma \psi(u) k_\alpha(z - \zeta_{\bm y^*_\ell}) \dd z), \quad \zeta_{\bm y^*_\ell} \sim f_{\zeta_{\bm y^*_\ell}}(v) \dd v \propto \prod_{j\colon t_j = \ell} k_\alpha(y^*_j - v) \omega(\dd v).
    \]
\end{itemize}
Conditionally to $\bm T^\prime$, all the point processes defined above are independent.
\end{theorem}
\begin{proof}
We specialise Theorem~\ref{teo:post} in the shot-noise Cox process case. Consider the distribution of $\tilde\mu^\prime$ in \eqref{eq:mu_post}, which depends on the reduced Palm measure $\mutilde^!_{\bm y^*}$.
Recall that, from \Cref{prop:marked_palm}, $\mutilde^!_{\bm y^*}$ is obtained by independently marking the process $\Phi^!_{\bm y^*}$ defined in \Cref{lem:palm_cox} with i.i.d. jumps from $H$.
Then, it is clear that we can operate the same disintegration by introducing latent variables $\bm T^! = (T^!_1, \ldots, T^!_k)$ and write
\begin{equation}\label{eq:mutilde_mix}
    \mutilde^!_{\bm y^*} \mid \bm T^! = \mutilde + \sum_{\ell = 1}^{|\bm T^!|} \mutilde_{\bm y^*_{\ell}}
\end{equation}
where the $\mutilde_{\bm y^*_{\ell}}$'s are obtained by marking the processes $\Phi_{\bm y^*_\ell}$ defined in \Cref{lem:palm_cox}.

Consider now the Laplace functional of $\tilde \mu^\prime$ in \eqref{eq:Laplace_mu_post}, simple algebra leads to
\begin{align*}
    \E\left[\e^{-\int_\X f(x) \mutilde^\prime(\dd x)}\right] &= \E\left[\frac{\E\left[\e^{-\int_\X (f(x) + u) \mutilde^!_{\bm y^*}(\dd x)} \mid \bm T^!\right]}{\E\left[\E\left[\e^{-u \mutilde^!_{\bm y^*}(\X)} \mid \bm T^!\right]\right]}\right] \\
    &= \E\left[\frac{\E\left[\e^{-\int_\X (f(x) + u) \mutilde^!_{\bm y^*}(\dd x)} \mid \bm T^!\right]}{\E\left[\e^{-u \mutilde^!_{\bm y^*}(\X)} \mid \bm T^!\right]} \cdot \frac{\E\left[\e^{-u \mutilde^!_{\bm y^*}(\X)} \mid \bm T^!\right]}{\E\left[\E\left[\e^{-u \mutilde^!_{\bm y^*}(\X)} \mid \bm T^!\right]\right]} \right] \\
    &= \sum_{\bm t^\prime \in (\bullet)} \frac{\E\left[\e^{-\int_\X (f(x) + u) \mutilde^!_{\bm y^*}(\dd x)} \mid \bm T^! = \bm t^\prime \right]}{\E\left[\e^{-u \mutilde^!_{\bm y^*}(\X)} \mid \bm T^! = \bm t^\prime\right]} \cdot \frac{\E\left[\e^{-u \mutilde^!_{\bm y^*}(\X)} \mid \bm T^! = \bm t^\prime\right]}{\E\left[\E\left[\e^{-u \mutilde^!_{\bm y^*}(\X)} \mid \bm T^! = \bm t^\prime\right]\right]} \prob(\bm T^! = \bm t^\prime)
\end{align*}
where $(\bullet)$ denotes all the possible partitions of $k$ objects. Now observe that the term
\begin{align*}
    \frac{\E\left[\e^{-u \mutilde^!_{\bm y^*}(\X)} \mid \bm T^!= \bm t^\prime\right]}{\E\left[\E\left[\e^{-u \mutilde^!_{\bm y^*}(\X)} \mid \bm T^! = \bm t^\prime\right]\right]} \prob(\bm T^! =\bm t^\prime)  \propto \e^{\gamma |\bm t^\prime| (\psi(u) - 1)} \prod_{\ell=1}^{|\bm t^\prime|} \eta(\bm y^*_\ell)
\end{align*}
is simply the distribution of the variables $\bm T^\prime$ described in the statement.

Therefore, by Theorem 3.4 in \cite{Kallenberg2021} we can introduce latent variables $\bm T^\prime$ and consider the conditional expectation
\[
    \E\left[\e^{-\int_\X f(x) \mutilde^\prime(\dd x)} \mid \bm T^\prime =\bm t^\prime \right] = \frac{\E\left[\e^{-\int_\X (f(x) + u) \mutilde^!_{\bm y^*}(\dd x)} \mid \bm T^! = \bm t^\prime\right]}{\E\left[\e^{-u \mutilde^!_{\bm y^*}(\X)} \mid \bm T^! = \bm t^\prime\right]}.
\]
Further, by \eqref{eq:mutilde_mix} and the independence of the terms on the right hand side of \eqref{eq:mutilde_mix}, we have
\begin{equation} \label{eq:mu_prime_shot_posterior}
    \E\left[\e^{-\int_\X f(x) \mutilde^\prime(\dd x)} \mid \bm T^\prime\right] =  \frac{\E\left[\e^{-\int_\X (f(x)+u) \mutilde(\dd x)}\right]}{\E\left[\e^{-u \mutilde(\X)}\right]} \prod_{\ell=1}^{|\bm T^\prime|} \frac{\E\left[\e^{-\int_\X (f(x)+u) \mutilde^!_{\bm y^*_\ell}(\dd x)}\right]}{\E\left[\e^{-u \mutilde^!_{\bm y^*_\ell}(\X)}\right]}.
\end{equation}
Focusing on the first term on the right hand side, and in particular the numerator, by \Cref{lemma:laplace_marked} we have 
\begin{align*}
    & \E\left[\e^{-\int_\X (f(x)+u) \mutilde(\dd x)}\right] = \E\left[ \e^{\int_\X \log \left(\int_{\R_+} \e^{- (f(x) + u)s } H(\dd s) \right) \Phi(\dd x)}  \right] \\
   & \qquad \qquad = \E_\Lambda\left[ \E\left[ \e^{\int_\X \log \left(\int_{\R_+} \e^{- (f(x) + u) } H(\dd s) \right) \Phi(\dd x)}  \mid \Lambda \right] \right] \\
    & \qquad \qquad = \E_\Lambda\left[ \exp \left\{ - \int_\X  \left[ 1 - \left(\int_{\R_+} \e^{- (f(x) + u)s } H(\dd s) \right)\right] \gamma \int_{\X} k_{\alpha}(x - y) \Lambda(\dd y) \dd x \right\}  \right] \\
    & \qquad \qquad = \E_\Lambda\left[ \exp \left\{ - \int_\X \int_\X \left[ 1 - \left(\int_{\R_+} \e^{- (f(x) + u)s } H(\dd s) \right) \right] \gamma k_{\alpha}(x - y) \dd x \Lambda(\dd y) \right\} \right] \\
     & \qquad \qquad = \exp \left\{ - \int_\X 1 - \exp \left\{ - \int_\X \left[ 1 - \left(\int_{\R_+} \e^{- (f(x) + u)s } H(\dd s) \right)\right] \gamma k_{\alpha}(x - y) \dd x \right\} \omega(\dd y) \right\}
\end{align*}
where the third equality follows from the L\'evy-Khintchine representation for $\Phi \mid \Lambda$ (see, e.g., \cite{kingman1992poisson}, Chapter 8), the fourth one by Fubini's theorem and the last one from the L\'evy-Khintchine representation for $\Lambda$.
When $f \equiv 0$, the previous expression reduces to
\begin{equation*}
    \E\left[ \e^{\int_\X \log \left(\int_{\R_+} \e^{- us } H(\dd s) \right) \Phi(\dd x)}  \right] = \exp \left\{ - \int_\X 1 - \exp \left\{ - \gamma \int_{\R_+} (1 - \e^{- us}) H(\dd s) \right\} \omega(\dd y) \right\}.
\end{equation*}
Therefore,
\begin{align*}
    & \frac{\E\left[ \e^{\int_\X \log \left(\int_{\R_+} \e^{- (f(x) + u)s } H(\dd s) \right) \Phi(\dd x)}  \right]}{\E\left[ \e^{\int_\X \log \left(\int_{\R_+} \e^{- us } H(\dd s) \right) \Phi(\dd x)}  \right]} \\
    & \qquad \qquad = \exp \Bigg\{ - \int_\X \exp \left[ - \gamma \int_{\R_+} (1 - \e^{- us}) H(\dd s) \right] - \\
     & \qquad \qquad  \qquad \exp \left[- \int_\X \gamma k_{\alpha}(x - y)  \int_{\R_+} ( 1 - \e^{- (f(x) + u)s }) H(\dd s)  \dd x \right] \omega(\dd y) \Bigg\} \\
    & \qquad \qquad = \exp\Bigg\{ - \int_\X \left[ 1 - \exp\left( - \int_\X \gamma k_\alpha(x - y) \int_{\R_+} \left(1-\e^{- s f(x)}\right) \e^{-us} H(\dd s) \dd x\right)  \right] \\
    & \qquad \qquad  \qquad \qquad  \times  \e^{- \gamma \int_{\R_+} (1 - \e^{-us}) H(\dd s)} \omega(\dd y) \Bigg\} .
\end{align*}
By a close inspection of the last expression, we recognize the Laplace functional of a measure $\mutilde(\dd x) = \int_{\R_+} s \tilde \Psi(\dd x \ \dd s)$, where $\tilde \Psi = \sum_{j \geq 1} \delta_{(\tilde X_j, \tilde S_j)}$ is such that
\begin{itemize}
    \item[(i)] $\tilde S_j \iid H^\prime(\dd s)$, as $j\geq 1$, with $H^\prime$ specified as
\[
    H^\prime(\dd s) := \frac{\e^{-us} H(\dd s)}{\psi(u)},
\]
and $\psi(u) = \int_{\R_+} \e^{-us} H(\dd s)$;
\item [(ii)] the $\tilde X_j$'s are the points of  a shot-noise Cox process  $\tilde \Phi = \sum_{j \geq 1} \delta_{\tilde X_j}$ defined as
\[
    \tilde \Phi \mid \tilde \Lambda \sim \mathrm{PP}\left(\gamma \psi(u)  \int_{\X} k_\alpha(x - v) \tilde \Lambda(\dd v) \dd x \right), \qquad \tilde \Lambda \sim \mathrm{PP}\left(  \e^{- \gamma \int_{\R_+} 1 - \e^{-us} H(\dd s)} \omega(\dd y) \right).
\]
\end{itemize}
The distributional equivalence follows by a  simple application of  Lemma \ref{lemma:laplace_cox}.

Regarding the ratio involving the generic term $\mutilde^!_{\bm y^*_\ell}$ in Equation \eqref{eq:mu_prime_shot_posterior}, arguing as above, it is easy to show that
\begin{equation*}
    \frac{\E\left[ \e^{\int_\X \log \left(\int_{\R_+} \e^{- (f(x) + u)s } H(\dd s) \right) \Phi_{\bm y^*_\ell}(\dd x)}  \right]}{\E\left[ \e^{\int_\X \log \left(\int_{\R_+} \e^{- us } H(\dd s) \right) \Phi_{\bm y^*_\ell}(\dd x)}  \right]} = 
    \E_{\zeta_{\bm y^*_\ell}} \left[\exp\left( - \int_{\X\times\R_+} (1 - \e^{-f(x) s}) H^\prime(\dd s)  \psi(u) \omega_{\zeta_{\bm y^*_\ell}}(\dd x) \right) \right] 
\end{equation*}
where we recognise on the right-hand side the Laplace transform of the random measure $\mutilde_{\bm y^*_\ell} = \sum_{j \geq 1} S^\prime_j \delta_{X^\prime_j}$ where $S^\prime_j \iid H^\prime$ (defined above) and the point process $\tilde \Phi_{\bm y^*_\ell} = \sum_{j \geq 1} \delta_{X^\prime_j}$ is a Poisson point process with random intensity 
\[ 
    \omega_{\zeta_j}(\dd x) = \gamma  \psi(u) k_\alpha(\zeta_{\bm y^*_\ell} - x) \dd x
\] 
where
\[
    \zeta_{\bm y^*_\ell} \sim f_\zeta(v)\dd v \propto \prod_{j \colon t_j^\prime = \ell} k_\alpha(y^*_j - v) \omega(\dd v).
\]
\end{proof}

\begin{proposition}\label{cor:marg_cox}
    Consider the model \eqref{eq:model} and assume that $\plaw_{\Phi}$ is the law of a shot-noise Cox process, then conditionally on $U_n$, the marginal distribution of a sample $\bm Y$  and the latent variables $\bm T^\prime = (T^\prime_1, \ldots, T^\prime_k)$, introduced in \Cref{teo:post_cox}, equals
    \[
        \prob(\bm Y \in \dd \bm y , \bm T^\prime= \bm t^\prime \mid U_n=u) \propto \gamma^k \prod_{j=1}^k \kappa(u, n_j)  \prod_{\ell =1}^{| \bm t^\prime|} \eta(y^*_i \colon t_i^\prime = \ell ) \e^{\gamma |\bm t^\prime| (\psi(u) - 1)} \dd \bm y^* .
    \]
\end{proposition}
\begin{proof}
    The marginal distribution of $\bm Y$ given $U_n=u$ can be recovered from \Cref{teo:marg}, and it equals
    \[
        \prob(\bm Y \in \dd \bm y \mid U_n=u) \propto \E\left[\e^{-u \mu^!_{\bm y^*}(\X)}\right] \prod_{j=1}^k \kappa(u, n_j) M_{\Phi^{(k)}}(\dd \bm y^*).
    \]
    By \Cref{lem:palm_cox}, we have
    \begin{align*}
        & \prob(\bm Y \in \dd \bm y \mid U_n=u) \propto \prod_{j=1}^k \kappa(u, n_j) \sum_{\bm t \in (\bullet)} \frac{ \prod_{\ell =1}^{|\bm t|} \eta(y^*_i \colon t_i = \ell )}{\sum_{\bm{\tilde{t}}\in (\bullet)} \prod_{\ell =1}^{|\bm{\tilde{t}}|} \eta(y^*_i \colon \tilde t_i = \ell ) } \e^{\gamma |\bm t| (\psi(u) - 1)} M_{\Phi^{(k)}} (\dd \bm y^*)\\
        & \qquad   \propto \prod_{j=1}^k \kappa(u, n_j) \sum_{\bm t \in (\bullet)} \frac{ \prod_{\ell =1}^{|\bm t|} \eta(y^*_i \colon t_i = \ell )}{\sum_{\bm{\tilde{t}} \in (\bullet)} \prod_{\ell =1}^{|\bm{\tilde{t}}|} \eta(y^*_i \colon \tilde t_i = \ell ) } \e^{\gamma |\bm t| (\psi(u) - 1)} \gamma^k \sum_{\bm{\tilde{t}}\in (*)} \prod_{\ell =1}^{|\bm{\tilde{t}}|} \eta(y^*_i \colon \tilde t_i = \ell ) \dd \bm y^* \\
        & \qquad = \gamma^k \prod_{j=1}^k \kappa(u, n_j) \sum_{\bm t \in (\bullet)} \prod_{\ell =1}^{|\bm t|} \eta(y^*_i \colon t_i = \ell ) \e^{\gamma |\bm t| (\psi(u) - 1)} \dd \bm y^* .
    \end{align*}
    Therefore, we can introduce auxiliary latent variables $\bm T^\prime$ distributed as in \Cref{teo:post_cox} by a suitable augmentation of the underlying probability space, and the result follows. 
\end{proof}

\subsection{The predictive distribution} \label{app:sncp_pred}

We discuss now the predictive distribution, by specializing Theorem~\ref{teo:pred_conditional} to the shot-noise Cox process case. To this end, let $\bm T^k$ the latent indicator variables describing the partition of $[k]$ introduced in \Cref{lem:palm_cox}, where we have now made explicit the dependence on $k$ (the length of the vector $\bm y^*$) and suppressed the $!$ superscript for ease of notation. Define $\bm T^{k+1}$ analogously. Let $\bm{\tilde y} = (\bm y^*, y)$.
Then, the ratio appearing in \eqref{eq:prediction_rule} equals
\begin{align*}
    \frac{\E\left[\e^{-u \mutilde^!_{(\bm y^*, y)}(\X)}\right] m_{\Phi^{k+1}}(\bm y^*, y)}{\E\left[\e^{-u \mutilde^!_{\bm y^*}(\X)}\right] m_{\Phi^{k}}(\bm y^*)} P_0(\dd y) =  \frac{\sum_{\bm t^{k+1}} \e^{\gamma |\bm t^{k+1}|(\psi(u)-1)} \prod_{\ell=1}^{|\bm t^{k+1}|}\eta(\bm{\tilde y}_\ell) }{\sum_{\bm t^{k}} \e^{\gamma |\bm t^{k}|(\psi(u)-1)} \prod_{m=1}^{|\bm t^{k}|}\eta(\bm y^*_m) } \dd y 
\end{align*}
where the summation at the numerator ranges over all possible partitions of $[k+1]$ and the one at the denumerator over all possible partitions of $[k]$.
Furthermore
\begin{align*}
    & \frac{\E\left[\e^{-u \mutilde^!_{(\bm y^*, y)}(\X)}\right] m_{\Phi^{k+1}}(\bm y^*, y)}{\E\left[\e^{-u \mutilde^!_{\bm y^*}(\X)}\right] m_{\Phi^{k}}(\bm y^*)} P_0(\dd y) \\
    & \qquad = \frac{\sum_{\bm t^{k}} \left\{\sum_{t_{k+1}=1}^{|\bm t_k|} \e^{\gamma |\bm t^{k}|(\psi(u)-1)} \prod_{\ell \neq t_{k+1}} \eta(\bm{y}^*_\ell) \eta(\bm y^*_{t_{k+1}}, y) + \e^{\gamma (|\bm t^{k}| + 1) (\psi(u)-1)} \prod_{\ell=1}^{|\bm t^{k}|}\eta(\bm y^*_\ell) \eta(y) \right\}}{\sum_{\bm t^{k}} \e^{\gamma |\bm t^{k}|(\psi(u)-1)} \prod_{m=1}^{|\bm t^{k}|}\eta(\bm y^*_m) } \dd y \\
    & \qquad = \frac{\sum_{\bm t^{k}} \left\{\sum_{t_{k+1}=1}^{|\bm t_k|} \e^{\gamma |\bm t^{k}|(\psi(u)-1)} \prod_{\ell=1}^{|\bm t_k|} \eta(\bm{y}^*_\ell) \frac{\eta(\bm y^*_{t_{k+1}}, y)}{\eta(\bm y^*_{t_{k+1}})} + \e^{\gamma (|\bm t^{k}| + 1) (\psi(u)-1)} \prod_{\ell=1}^{|\bm t^{k}|}\eta(\bm y^*_\ell) \eta(y) \right\}}{\sum_{\bm t^{k}} \e^{\gamma |\bm t^{k}|(\psi(u)-1)} \prod_{m=1}^{|\bm t^{k}|}\eta(\bm y^*_m) } \dd y. 
\end{align*}
It is now straightforward to recognize the expectation with respect to random variables $\bm T^{\prime}$ (describing a partition of $[k]$) distributed as in Theorem~\ref{teo:post_cox}, namely
\[
    \frac{\E\left[\e^{-u \mutilde^!_{(\bm y^*, y)}(\X)}\right] m_{\Phi^{k+1}}(\bm y^*, y)}{\E\left[\e^{-u \mutilde^!_{\bm y^*}(\X)}\right] m_{\Phi^{k}}(\bm y^*)} P_0(\dd y)  = \E_{\bm T^\prime}\left[ \sum_{j=1}^{|\bm T^\prime|} \frac{\eta(\bm y^*_{j}, y)}{\eta(\bm y^*_{j})} + \e^{\gamma (\psi(u) - 1)} \eta(y)\right] \dd y.
\]
Consequently, the predictive distribution \eqref{eq:prediction_rule} of Theorem~\ref{teo:pred_conditional} for the shot-noise Cox process case boils down to
\begin{equation*}
    \begin{split}
        \prob(Y_{n+1} \in A \mid \bm Y = \bm y, \bm T^\prime = \bm t^\prime, U_n=u)  \propto \sum_{j=1}^k &\frac{\kappa(u, n_j + 1)}{\kappa(u, n_j)}   \delta_{y_j^*} (A) \\
    &+\int_A \kappa(u, 1) \left[ \sum_{j=1}^{|\bm t^\prime|} \frac{\eta(\bm y^*_{j}, y)}{\eta(\bm y^*_{j})} + \e^{\gamma (\psi(u) - 1)} \eta(y)  \right]  \dd y
    \end{split}
\end{equation*}
Augmenting the underlying probability space and including latent variables $\bm T^\prime = \bm t^\prime$, we can now deduce a restaurant representation of the predictive distribution, whereby the restaurant tables are divided into ``thematic rooms'' and tables within the same room eat similar dishes. For concreteness, one can think of a huge restaurant whereby different rooms serve dishes from different parts of the world.
Customer 1 enters the first room and eats dish $y^*_1 \sim \eta(y^*_1)$ in the first table; we use $t^\prime_1 = 1$ to indicate the room in which the customer is sitting. As before, all customers sitting at the same table are eating the same dish.
After $n$ customers have entered, suppose they have occupied $k$ distinct tables, serving dishes $y^*_1, \ldots, y^*_k$, in $|\bm t_k|$ different rooms. Then, customer $n+1$ can carry out three distinct choices, as described below.
\begin{itemize}
    \item[(i)] Customer $n+1$ sits at one of the previously occupied tables, say table $j$, with probability proportional to $\kappa(u, n_j + 1) / \kappa(u, n_j)$. Note that this probability depends only on the ``popularity'' of the table, and not the room in which it is placed. 

    \item[(ii)]  Customer $n+1$ sits at a new table in room $\ell \in \{1, \ldots, |\bm t^\prime_k| \}$ with probability proportional to $\int_\X \kappa(u, 1) \eta(\bm y^*_\ell, y) / \eta(\bm y^*_\ell) \dd y$, eating a dish $y$ generated from a probability density proportional to $\kappa(u, 1) \eta(\bm y^*_\ell, y) / \eta(\bm y^*_\ell)$. In this case, we set $t^\prime_{k+1} = \ell$ and $y^*_{k+1} = y$. 

    \item[(iii)]  Customer $n+1$ enters an empty room (sitting, a fortiori, in a new table) with probability proportional to $\int_\X \kappa(u, 1) \e^{\gamma (\psi(u)- 1))} \eta(y) \dd y $. The customer eats a new dish $y$ generated from a  probability density function proportional to $\kappa(u, 1) \e^{\gamma (\psi(u)- 1))} \eta(y) \dd y $. In this case, we set $t^\prime_{k+1} = |\bm t^\prime| + 1$ and $y^*_{k+1} = y$. 
\end{itemize}

\subsection{Details about the MCMC algorithm}\label{app:scnp_algo}

Let $\Lambda = \sum_{h \geq 1} \delta_{\zeta_h}$ be the directing Poisson process, and recall the latent variables $(T_1, \ldots, T_k)$ introduced in Example \ref{ex:sncp_def} and the double-summation formulation for $\Phi$. 
In the following, let $g = |\bm T|$ be the number of unique values in $(T_1, \ldots, T_k)$.
Then, the Gibbs sampling algorithm can be summarized in the following steps.

\begin{enumerate}[nolistsep]
    \item Sample $U_n \sim \Gamma(n, S_\bullet)$, where $S_\bullet = \sum_{j \geq 1} S_j$. 
    
    \item Let $X^{(a)}_1, \ldots, X^{(a)}_k$ denote the active atoms. Consider now the posterior distribution in \Cref{teo:post_cox} and let $X^{(na)}_{\ell, j}$ denote the $j$-th atom of the measure $\mutilde_{\bm y^*_\ell}$ for $\bm y^*_\ell := (X^{(a)}_j \colon T_j = \ell)$. Let $X^{(na)}_{0, j}$ denote the $j$-th support point of the measure $\mutilde_0$ in \Cref{teo:post_cox}. Denote the concatenation of all such atoms as $X_1, \ldots, X_K$ and analogously for $W_1, \ldots, W_k$ and $S_1, \ldots, S_K$.
    Sample the cluster allocations from a categorical distribution with weights:
    \[
        \prob(C_i = h \mid \text{rest}) \propto S_h f(y_i \mid X_h, W_h)
    \]
    Relabel the allocation variables, the atoms, and the unnormalized weights so that the first $k$ of them are the active ones.
    
    \item Sample the active part
    \begin{enumerate}
        \item Sample $S^{(a)}_h \sim f_{S_h}(s) \propto s^{n_h} \e^{-U_n s} H(\dd s)$, $h=1, \ldots, k$, where $n_h = \sum_{i=1}^n \indicator[C_i=h]$.
        \item Sample $X^{(a)}_h \sim f_X(x) \propto \prod_{i: C_i = h} f(Z_i \mid x, W^{(a)}_h) k_\alpha(x - \zeta_{T_h})$, $h=1, \ldots, k$.
        \item Sample $W^{(a)}_h \sim f_W(w) \propto \prod_{i: C_i = h} f(Z_i \mid X^{(a)}_h, w) f_W(w)$, $h=1, \ldots, k$
    \end{enumerate}
    \item Sample the non-active part using the characterization in \Cref{teo:post_cox}:
    \begin{enumerate}
        \item For $\ell=1, \ldots, |\bm T|$:
        \begin{enumerate} 
            \item Sample $\zeta_\ell \sim f_{\zeta_h}(v) \dd v \propto \prod_{j: T_j = \ell} k_\alpha(X^{(a)}_j - v) \omega(\dd v)$
            \item Sample $X^{(na)}_{\ell, 1}, \ldots, X^{(na)}_{\ell, n_\ell}$ from a Poisson process with intensity $\gamma \psi(u) k_\alpha(z - \zeta_\ell)\dd z$ as in Theorem~\ref{teo:post_cox}.
        \end{enumerate}
        \item Sample $X^{(na)}_{0, 1}, \ldots, X^{(na)}_{0, n_0} \sim \mathrm{SNCP}(\gamma \psi(u), k_\alpha, \e^{-\gamma(1 - \psi(u)}\omega)$.
        \item For all the $X^{(na)}$'s simulated above, sample the associated $S^{(na)} \sim f_S(s) \propto  \e^{-us} H(\dd s)$, $h = k_a + 1, \ldots$
        \item For all the $X^{(na)}$'s simulated above, sample the associated $W^{(na)} \sim f_W$
    \end{enumerate}
    
    \item Sample the latent Poisson process
    \begin{enumerate}
        \item For each atom $X_1, \ldots, X_K$, sample the associated latent variable, say $T_h$, from a categorical distribution over all the atoms of $\Lambda$ with weights $ \prob(T_h = \ell \mid \text{rest}) \propto k_{\alpha}(X_h - \zeta_\ell)$. 
        Relabel the $T_h$'s and the atoms of $\Lambda$ so that the unique values in the $T_h$'s are the first ones. 
    \end{enumerate}
\end{enumerate}

\section{Further numerical illustrations}\label{app:numerical_examples}

\subsection{The DPP prior}

In this appendix, we consider the general class of power exponential DPPs \citep{Lav15}, for which the kernel $K$ has spectral representation on $S = [-1/2, 1/2]^d$
\begin{equation}\label{eq:pes_mercer}
    K(x, y) = \sum_{j \in \mathbb Z^d} \lambda_j \cos(2 \pi \langle j, x - y \rangle),
\end{equation}
for eigenvalues
\begin{equation}\label{eq:pes_dpp}
    \lambda_j = \rho \frac{\alpha^d \Gamma(d/2 + 1)}{\pi^{d/2} \Gamma(d/\nu + 1)}  \exp(- \|\alpha j\|^\nu).
\end{equation}
Such a DPP has a density with respect to the unit-rate Poisson process on $S$ if $\alpha^d < \alpha^d_{\max}:= \rho^{-1} \pi^{d/2} \Gamma(d/\nu + 1) / \Gamma(d/2 + 1)$.
Moreover, we recover the prior specification for Gaussian DPP discussed in Section~\ref{sec:numeric} setting $\nu=2$ and $\alpha = 0.5 \alpha_{\max}$.

Parameter $\rho$ is the intensity of the process and is equal to the expected number of points a priori. The repulsiveness of the process is controlled by the parameters $(\rho, \nu, \alpha)$ in a rather complex manner, as discussed in \cite{Lav15}. In particular, having fixed $(\nu, \alpha)$, increasing $\rho$ reduces the repulsiveness of the process (but observe that $\rho$ has a lower bound since $\alpha < \alpha_{\max}$). Similarly, for fixed $(\rho, \alpha)$ increasing $\nu$ increases the repulsiveness of the process and so does increasing $\alpha$ for fixed $(\rho, \nu)$. 

Focusing on the case $d=1$, to ensure the validity of the DPP density, we reparametrize \eqref{eq:pes_dpp} by introducing $s \in (0, 1)$ and letting $\alpha = s \alpha_{\max}$, leading to
\[
    \lambda_j = s \exp(- \|\alpha j\|^\nu).
\]
Then, $s$ takes the interpretation of the repulsiveness of the DPP (in the natural scale between zero and one) \emph{for fixed $\rho$ and $\nu$}. See also Section 2.3 of \cite{ghilotti2023bayesian} for a discussion on the interpretation of repulsiveness hyperparameters in DPPs (under a similar, though not same, parameterization).

\subsubsection{Approximation of the DPP density}\label{app:dpp_density}

The implementation of the conditional algorithm requires the numerical evaluation of the density of the DPP with respect to a suitable Poisson process. 
For our discussion, it is sufficient to consider the case of a DPP defined on $S = [-1/2, 1/2]$. Indeed, to define the DPP on a general interval $[l, u]$, it is sufficient to operate a simple rescaling as shown in \cite{Lav15}. In our examples, we set $l = \bar y - 1.5 \delta$ and $u = \bar y + 1.5 \delta$  where $\bar y$ is the empirical mean of the data and $\delta = \max\{y_1, \ldots, y_n\} - \min\{y_1, \ldots, y_n\}$ is the length of the empirical range.

From \eqref{eq:pes_mercer} and Theorem~\ref{teo:dpp_dens}, the DPP has density with respect to the unit-rate Poisson process on $S$ given by
\[
    f_{\Phi}(\nu) = \e^{-D} \det \{C(x, y)\}_{x, y \in \nu}
\]
where $C(x, y) = \sum_{j \in \mathbb Z} \frac{\lambda_j}{1 -\lambda_j} \cos(2 \pi \langle j, x - y \rangle)$, which cannot be numerically evaluated due to the infinite summation.
However, as shown numerically in \cite{ghilotti2023bayesian}, the $\lambda_j$'s typically decrease exponentially fast so that replacing $\mathbb Z$ with $\{-N, \ldots -1, 0, 1, \ldots, N\}$ in the summation produces accurate numerical evaluations of the DPP density even for very small values of $N$. In our examples, we fix $N=25$ and note that if $\rho = 2, \nu=2, s = 0.5$, $\lambda_j > 0$ only for $|j| \leq 5$.

\subsubsection{Sensitivity analysis}\label{app:dpp_sensitivity}

\begin{figure}[t]
    \centering
    \includegraphics[width=\linewidth]{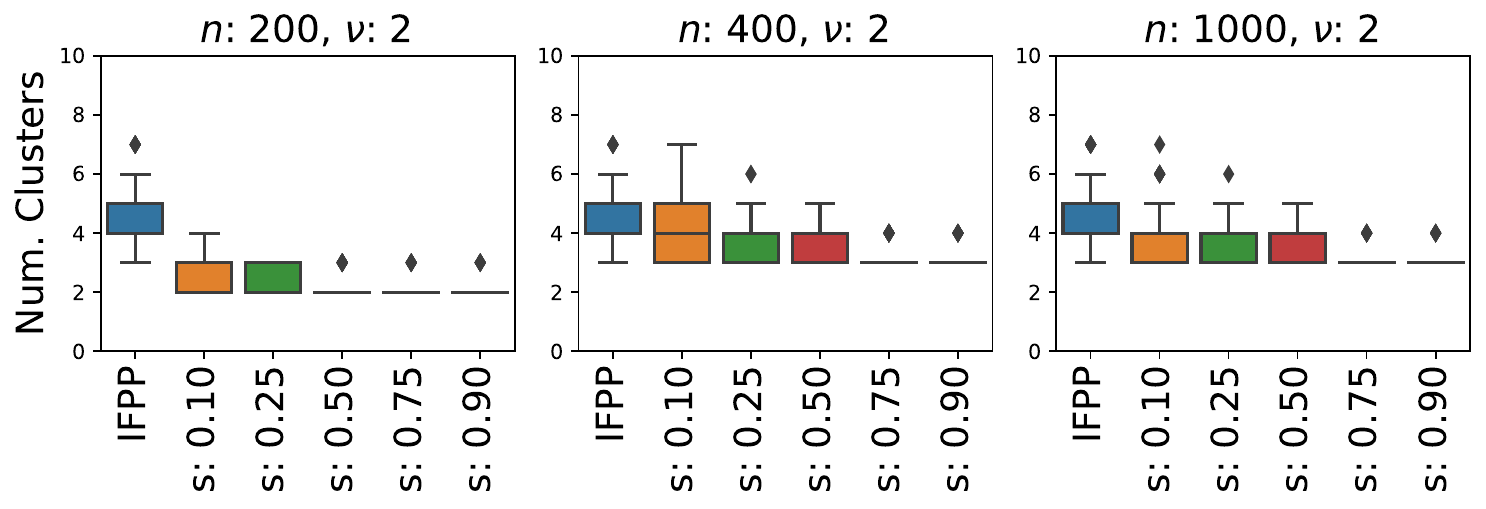}
    \includegraphics[width=\linewidth]{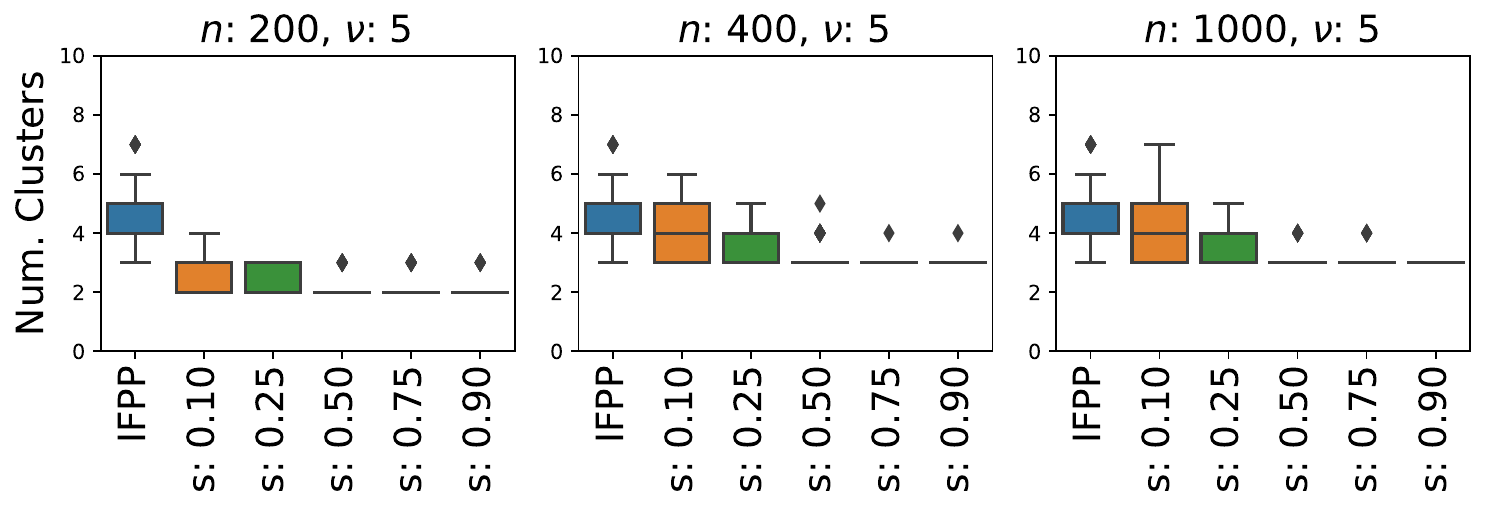}
    \caption{Posterior mean of the number of clusters $K_n$ for the simulation in Appendix \ref{app:dpp_sensitivity}. Boxplots refer to 100 independent replicates.}
    \label{fig:dpp_nclus}
\end{figure}

\begin{figure}[t]
    \centering
    \includegraphics[width=\linewidth]{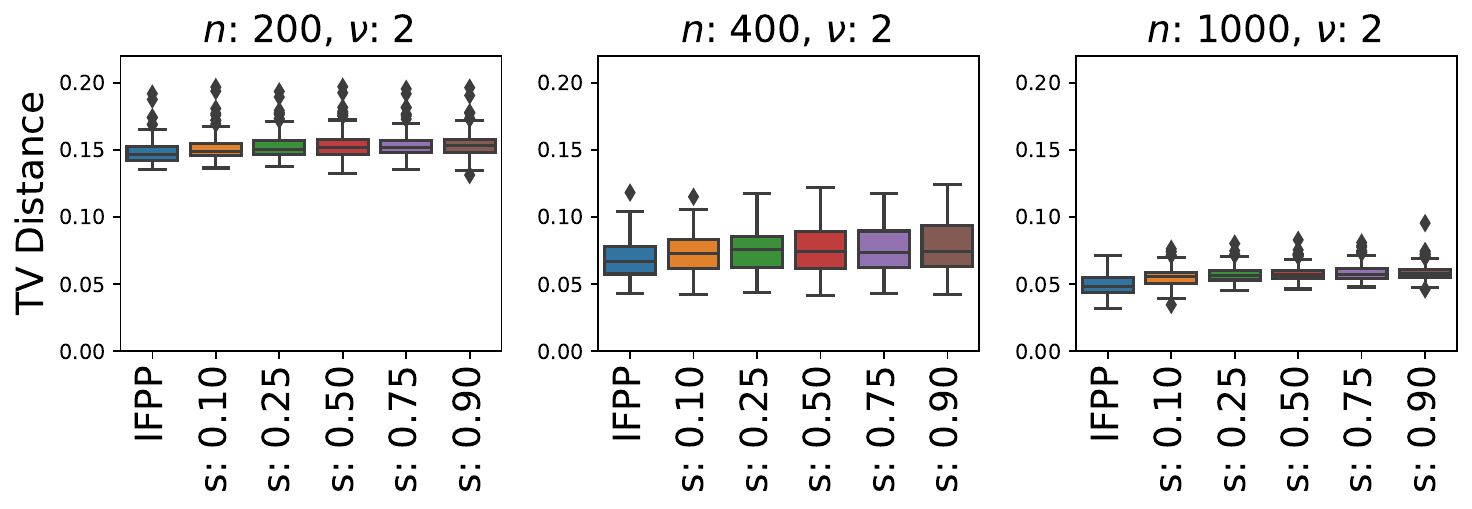}
    \includegraphics[width=\linewidth]{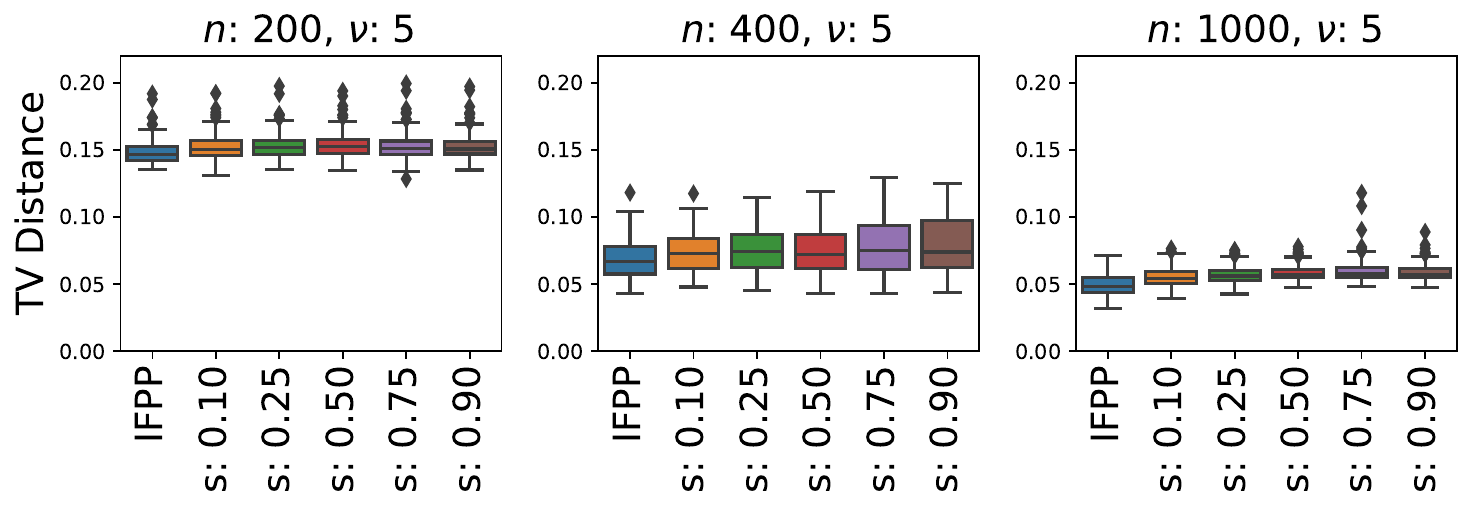}
    \caption{Total variation between the true and estimated densities for the simulation in Appendix \ref{app:dpp_sensitivity}. Boxplots refer to 100 independent replicates.}
    \label{fig:dpp_tv}
\end{figure}

We consider here a simple but illustrative numerical example, whereby we generate $n \in \{200, 500, 1000\}$ datapoints from a mixture of two Student's $t$ distributions as in Section~\ref{sec:simu1}.
Our modelling choices and prior hyperparameters are the same of Section~\ref{sec:numeric} except for the specification of the DPP prior. In particular, our goal here is to showcase the effect of different hyperparameters on posterior inference, specifically concerning cluster detection and density estimation. As discussed in Section~\ref{sec:tradeoff}, with a repulsive prior, we cannot jointly and accurately estimate the number of clusters and the true data-generating density due to model misspecification. 
Below, we empirically confirm that selecting hyperparameters that force more repulsiveness in the DPP prior leads to better cluster estimates at the cost of less accurate density estimates, and vice versa.
Moreover, we also show that the repulsiveness of the prior needs to be adapted to the sample size.

Figure~\ref{fig:dpp_nclus} shows the posterior mean of the number of clusters over 100 independent replicates for the DPP prior for different choices of $\nu$ and $s$ and $\rho=2$. We also add to the comparison the IFPP prior considered in Section~\ref{sec:numeric}.
Looking at all panels, it is clear that increasing $s$ leads to estimating fewer clusters a posteriori. 
However, moving across either the top or bottom row, for a fixed value of $s$ (e.g., $s=0.1$), we notice that as the sample size increases, so does the number of estimated clusters. Therefore, the larger the sample size, the more repulsive the prior needs to be.
Comparing the top and the bottom row of Figure~\ref{fig:dpp_nclus}, we appreciate the effect of $\nu$: the larger $\nu$, the more repulsive the prior is. This is evident, e.g., by looking at the plots corresponding to $n=400$ and $n=1000$ when $s = 0.5$. In these cases, if $\nu = 2$ we estimate 4 clusters a posteriori with a high probability, while if $\nu=5$ we estimate 3 clusters.
Figure~\ref{fig:dpp_tv} shows the total variation distance between the data generating density and the estimated density using the same settings as in Figure~\ref{fig:dpp_nclus}. This confirms our insights on the trade-off between density and cluster estimates with repulsive mixtures: in each plot, the larger $s$, the larger the distance between true and estimated density, even if the increase with $s$ is not dramatic.
Similarly, as the sample size increases, the density estimation error decreases.

\subsubsection{Learning hyperparameters via MCMC}\label{app:dpp_mcmc}

We now turn to the appealing possibility of learning the hyperparameters $(\rho, \nu, s)$ of the DPP prior in a fully Bayesian fashion. Such a strategy was proposed by \cite{bianchini2018determinantal} who suggested $\nu=2$, $s=0.5$ and $\rho - M \sim \mathrm{Gamma}(1, 1)$ where $M \approx 1.16$ is a positive constant. Since they tested their model on datasets where the number of clusters was fairly large (always larger than 6), we can comment that their default strategy is to set $\rho$ to a small value (recall that $\rho$ is the expected number of points a priori).
Moreover, they showcase the mixture model only on small datasets with $n \leq 100$ observations, and then focus on more complex models where covariates are included.

\begin{figure}[t]
    \centering
    \includegraphics[width=\linewidth]{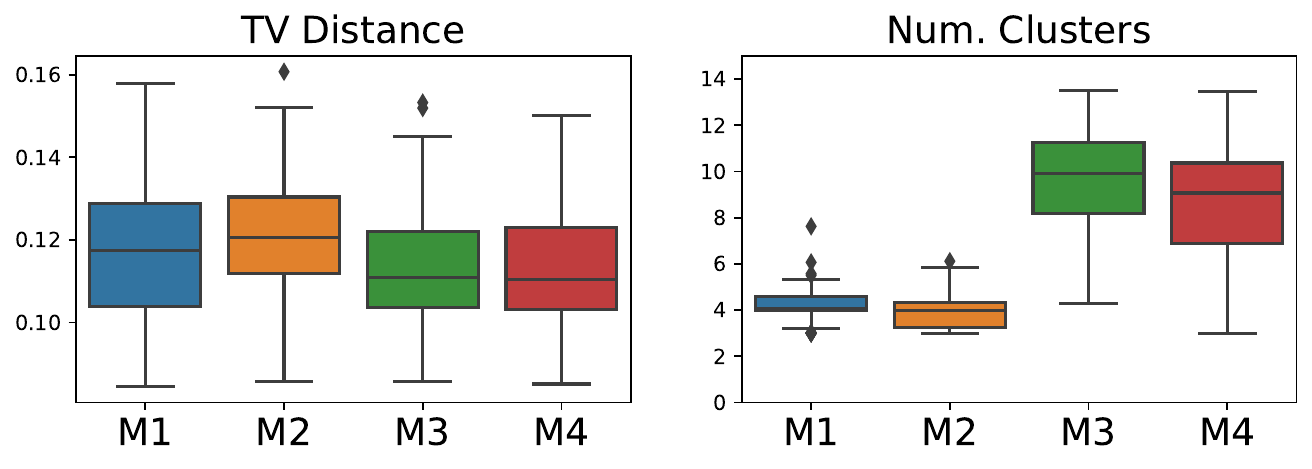}
    \includegraphics[width=\linewidth]{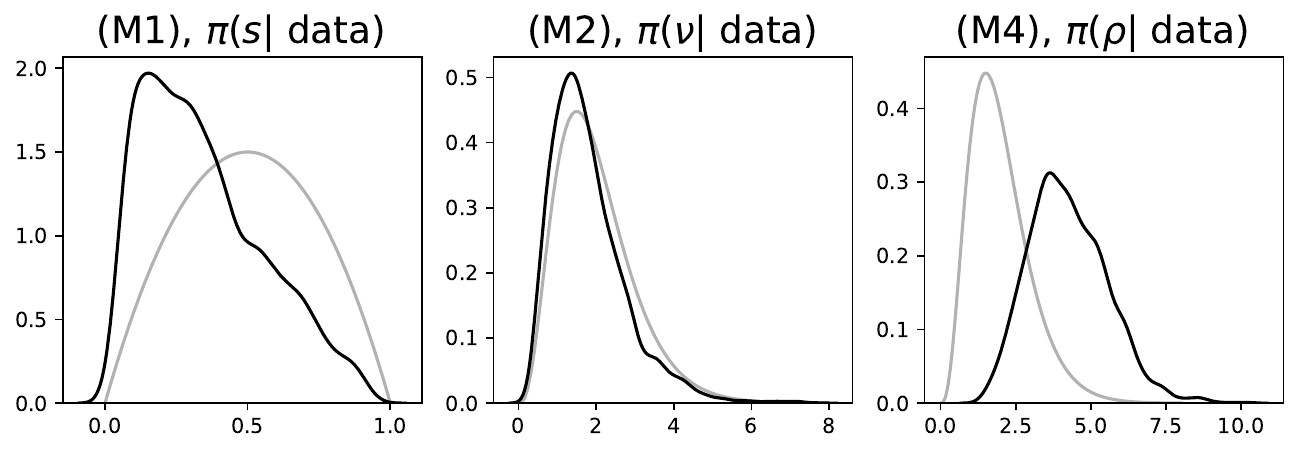}
    \caption{Total variation distance between the true and estimated densities (top left) and posterior mean for the number of clusters (top right). Boxplots refer to 100 independent experiments.
    Bottom row: kernel density estimates of the posterior distribution of relevant parameters under different models (black line) and associated prior densities (grey line).}
    \label{fig:dpp2}
\end{figure}

\cite{sun2022bayesian} consider a different prior specification based on repulsive Mat\'ern processes. Their numerical illustrations suggest that learning the parameters governing the repulsiveness in a fully-Bayesian requires very concentrated prior densities. Indeed, if they assume ``vague'' priors, the posterior puts mass to parameters favoring less repulsiveness in the model, thus overestimating the number of clusters.
Our experiments confirm that this is not a specific drawback to Mat\'ern processes, but is shared also by DPP priors and, we argue, by all repulsive priors.

In particular, we consider the same simulated dataset of Appendix \ref{app:dpp_density} and fix the sample size $n=200$. Then, we consider four different prior specification for the hyperparameters of the DPP: 
\begin{itemize}
    \item[(M1)] $\rho =  \nu = 2$, $s \sim \mathrm{Beta}(2, 2)$
    \item[(M2)] $\rho = 2$, $s = 0.75$, $\nu \sim \mathrm{Gamma}(4, 2)$
    \item[(M3)] $\nu = 2$, $\rho \sim \mathrm{Gamma}(4, 2)$, $s \sim \mathrm{Beta}(2, 2)$
    \item[(M4)] $\rho \sim \mathrm{Gamma}(4, 2)$, $\nu \sim \mathrm{Gamma}(4, 2)$, $s \sim \mathrm{Beta}(2, 2)$
\end{itemize}

In all cases, the random parameters are updated as part of the MCMC algorithm via a Metropolis-Hastings move. Specifically, for $\rho$ and $\nu$, we perform an update in the log scale and use a centered Gaussian proposal with standard deviation $0.25$, while for $s$ we work on the logit scale.

Figure~\ref{fig:dpp2} summarizes our findings. Looking at the top row, we clearly see that models (M3) and (M4) are associated with a higher number of clusters and lower density estimation errors, thus highlighting once again the tradeoff between clustering and density estimation. The posterior for $s$ under (M1) clearly shifts towards smaller values, thus resulting in a model that forces less repulsion a posteriori. Interestingly, the posterior on $\nu$ under model (M2) does not exhibit a significant difference from the prior. Indeed, model (M2) shows the smallest number of estimated clusters and the larger density estimation error.
Finally, the posterior for $\rho$ under model (M4) shows a clear shift towards higher values, still leading to less repulsion a posteriori.

Let us now give an intuition on why assuming the hyperparameters to be random leads to overestimating the number of clusters.
As discussed also in Section~\ref{sec:tradeoff}, we argue that this behavior can be traced back to the asymptotic behavior of nonparametric posteriors under model misspecification.
As shown in \cite{Kle(06)} the posterior of the mixture density contracts to a ``pseudo-true'' density $f^*(z) = \int_{\X \times \W} f(z \mid y, v) p^*(\dd y \, \dd v)$, that minimizes the Kullback-Leibler divergence between the true data generating density, say $f_0$, and the support of the prior.
As shown in \cite{cai2021finite}, if the prior is an IFPP, where the prior of the number of points is supported on the whole $\mathbb N$, then the posterior of the number of clusters diverges as the sample size increases because such mixtures yield consistent estimators for the density, i.e., $f^* = f_0$.
Now, observe that we can recover an IFPP prior as a limiting case of a DPP, namely if $K(x, y) = \indicator[x = y]$. 
In particular, \eqref{eq:pes_mercer} converges to the Fourier series of $\delta_0$ if we let $\rho \rightarrow +\infty$, or equivalently $\alpha_{\max} \rightarrow 0$.
As shown in our simulations above, letting hyperparameters $\rho$ or $s$ be random leads to posterior distributions favoring less repulsive behavior, which can be understood as the model trying to recover $f_0$ as closely as possible.

\subsection{The SNCP Prior}

\subsubsection{Prior elicitation}\label{app:sncp_prior}

\begin{figure}
    \centering
    \begin{subfigure}{0.33\linewidth}
    \centering
        \includegraphics[width=\linewidth]{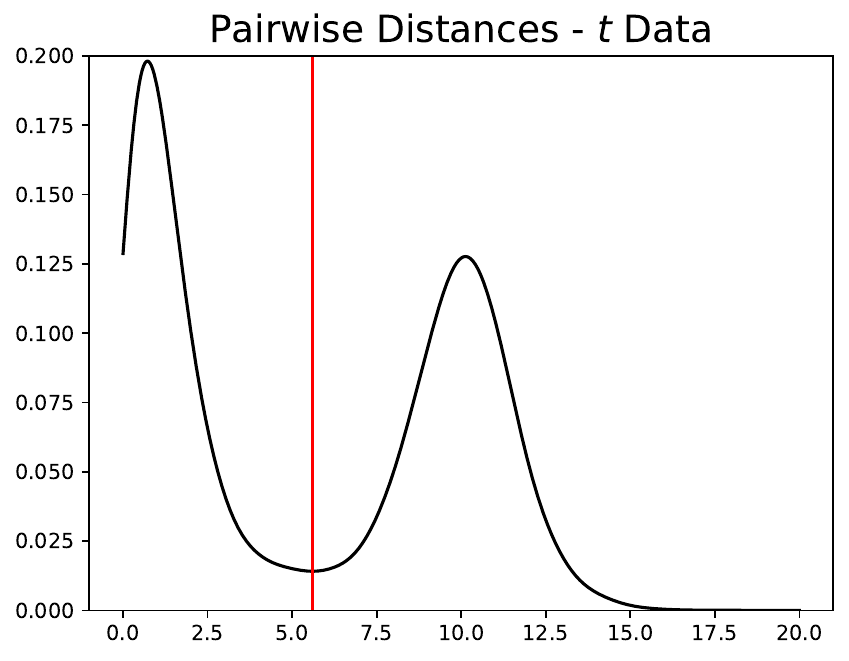}
    \end{subfigure}%
    \begin{subfigure}{0.33\linewidth}
    \centering
        \includegraphics[width=\linewidth]{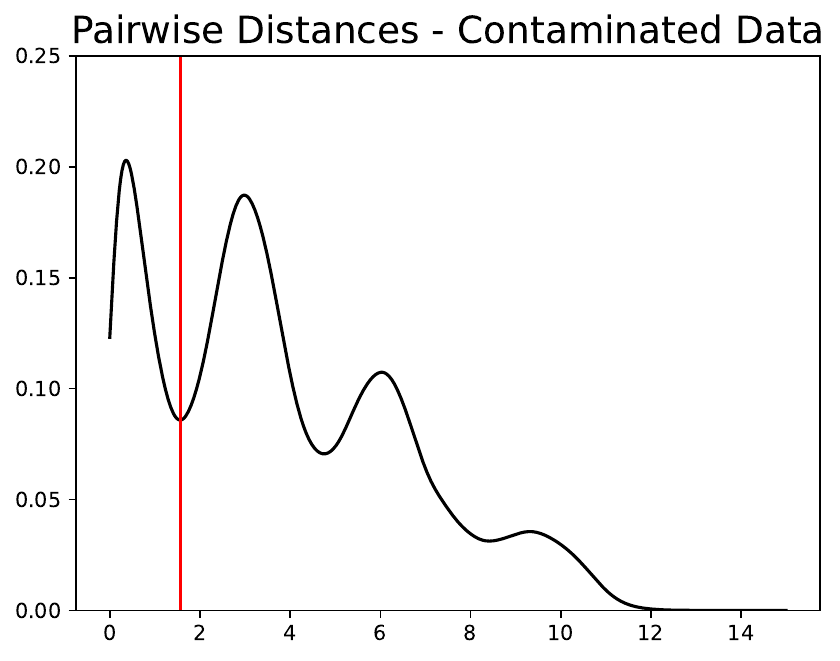}
    \end{subfigure}%
    \begin{subfigure}{0.33\linewidth}
    \centering
        \includegraphics[width=\linewidth]{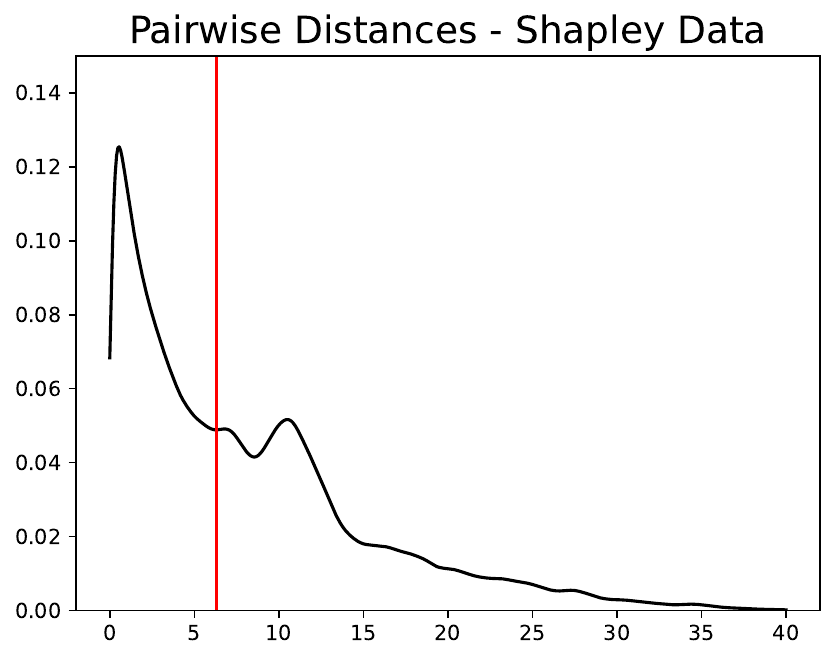}
    \end{subfigure}
    \caption{Kernel density estimates of the pairwise distances of datapoints for the datasets considered in Section~\ref{sec:numeric}.}
    \label{fig:pairwise_dists}
\end{figure}

We assume throughout that the kernel $k_\alpha$ is the univariate Gaussian density with variance $\alpha^2$.
The SNCP mixture model depends on the parameters $(\gamma, \alpha, \omega)$ that define the prior for $\Phi$, $a$ (the shape distribution of the Gamma prior on the unnormalised weights), and the parameters involved in the prior for the component-specific variances.

The parameters $\gamma$ and $a$ have a strong interplay as is usual in the case of finite mixtures \citep{ArDeInf19}. The number of \emph{mixture components increases with $\gamma$, and in particular, it is easy to see that its expected value is $\gamma \omega(\X)$}. 
On the other hand, the number of active components $K_n$ generally differs from $M$ and depends also on $a$. 
If $n \rightarrow +\infty$, $K_n \rightarrow M$ in distribution by de Finetti's theorem, but for finite samples, their distribution can drastically differ between a priori and a posteriori. This is not specific to the choice of the SNCP prior, and common prior elicitation strategies for finite mixtures can be used here.
The parameters controlling the prior for the component-specific variances also play a key role in the induced clustering. Also, in this case, this is common to all mixture models.

Specific to the SNCP prior instead are the parameters $\alpha$ and $\omega$. To understand their role, it is convenient to consider the double-summation formulation of $\Phi$ as in \eqref{eq:shot-noise2} as done in Section \ref{sec:sncp_mixture_of_mixture}. In particular, \eqref{eq:sncp_comp} shows that \emph{component} $h$, representing the $h$-th \emph{group} of data, has a mixture density supported over the points of $\Phi_h = \sum_{j \geq 1} \delta_{ X_{h, j}}$ such that $\Phi_h \mid \Lambda \sim \mathrm{PP}(\gamma k_\alpha(\cdot - \zeta_h))$.
In particular, the \emph{center} of the $h$-th component can be identified with $\zeta_h$.
Recalling that  $\Lambda = \sum_{h \geq 1} \delta_{\zeta_h} \sim \mathrm{PP}(\omega)$, it is clear how $\omega$ plays a similar role to the \emph{centering measure} of traditional mixtures, as it controls the location of the cluster centers $\zeta_j$'s. We assume $\omega$ to be the Gaussian measure and follow the typical prior elicitation strategies in setting its expectation equal to the empirical mean of the observed data and its variance equal to the empirical variance of the sample.

Moreover, recalling that $k_\alpha$ is the univariate Gaussian density with variance $\alpha^2$, the interpretation of $\alpha$ becomes transparent: it controls the dispersion of data within each \emph{group},
meaning that if $\alpha$ is small, each $X_{h, j}$ (for $j \geq 1$) will be close to $\zeta_h$. If we assume that the component means $X_{h, j}$ are somehow fixed by an oracle, small values of $\alpha$ will result in more ``groups'' of data being detected. In contrast, for large values of $\alpha$, we will detect a smaller number of groups, as the model will tend to cluster together the $X_{h, j}$'s.
This led us to consider $\alpha$ in the same ways as the \emph{interaction range} of repulsive point process priors. Then, we can employ the empirical Bayes strategy described in \cite{beraha21} to set $\alpha$.
Indeed \cite{beraha21} suggest fixing the interaction range by considering the pairwise distances of datapoints. In particular, they argue that the smallest local minimum (say $d^*$) of the density of pairwise distances gives a heuristic estimate of the smallest distance between two possible clusters. Hence, they fix the interaction range to $d^*$. Figure~\ref{fig:pairwise_dists} shows the kernel density estimate of the distance between datapoints in the numerical examples analyzed in our paper.
In the SNCP mixture, we cannot force separation between different clusters. We choose $\alpha$ such that $\prob(|X_{h, j} - X_{h, \ell}| < d^*) > 0.99$: that is, for two-component centres belonging to the same group of data, the probability that their distance is larger than the chosen range $d^*$ is less than $1\%$.
In particular, for the $t$ dataset, this procedure yields $\alpha \approx 2.3$, for the contaminated dataset $\alpha \approx 0.85$ and for the Shapley galaxy data $\alpha \approx 3.0$.

\subsubsection{Prior for the Shapley Galaxy Example}\label{app:galaxy}

For the SNCP mixture, following the discussion above, we fix $\omega = \mathcal N(\bar \mu, \bar \sigma^2)$ where $\bar \mu \approx 14.6$ and $\bar \sigma \approx 7.4$ are the empirical mean and standard deviation of the data.
We fix $\gamma=1$ so that each component as in \eqref{eq:sncp_comp} is expected to be represented by one Gaussian only and set $a=2$. As mentioned in the previous paragraph, we set $\alpha = 3.0$.
The variances are i.i.d. as the inverse-Gamma density with parameters $(2.0, 25.0)$. 

For the IFPP mixture, we assume that the component-specific means and variances follow a Normal-inverse-Gamma density such that $X_i \mid W_j \sim \mathcal N(\bar \mu, 10 W_j)$ and $1/W_j \sim \mbox{Gamma}(2, 25)$ and assume that $K-1 \sim \mbox{Poisson}(1)$.

For the DPP mixture, we set the same marginal prior for the variances and select the DPP parameters as in \cite{beraha21}.

\subsubsection[The role of alpha in SNCP mixtures]{The role of $\alpha$ in SNCP mixtures}\label{app:alpha_choice}

We report here a sensitivity analysis for the choice of $\alpha$ in the case of the Shapley data. We consider a subsample of $n=1000$ observations from the full dataset and run a posterior inference with parameters fixed as in Appendix \ref{app:galaxy} and $\alpha = 0.5, 1.0, 2.5, 5, 10, 50$. Figure~\ref{fig:galaxy_alpha} displays the density estimates and posterior distribution for the number of groups.
We can see that the density estimate is robust to the choice of $\alpha$, being virtually indistinguishable across all the different values.
Instead, the number of detected groups varies with $\alpha$, but not dramatically. In particular, when $\alpha \leq 25$, we estimate between 6 and 9 clusters, while for $\alpha=50$, the model identifies only 5 clusters.
Overall, inference appears to be robust to $\alpha$.

\begin{figure}
    \centering
    \includegraphics[width=0.75\linewidth]{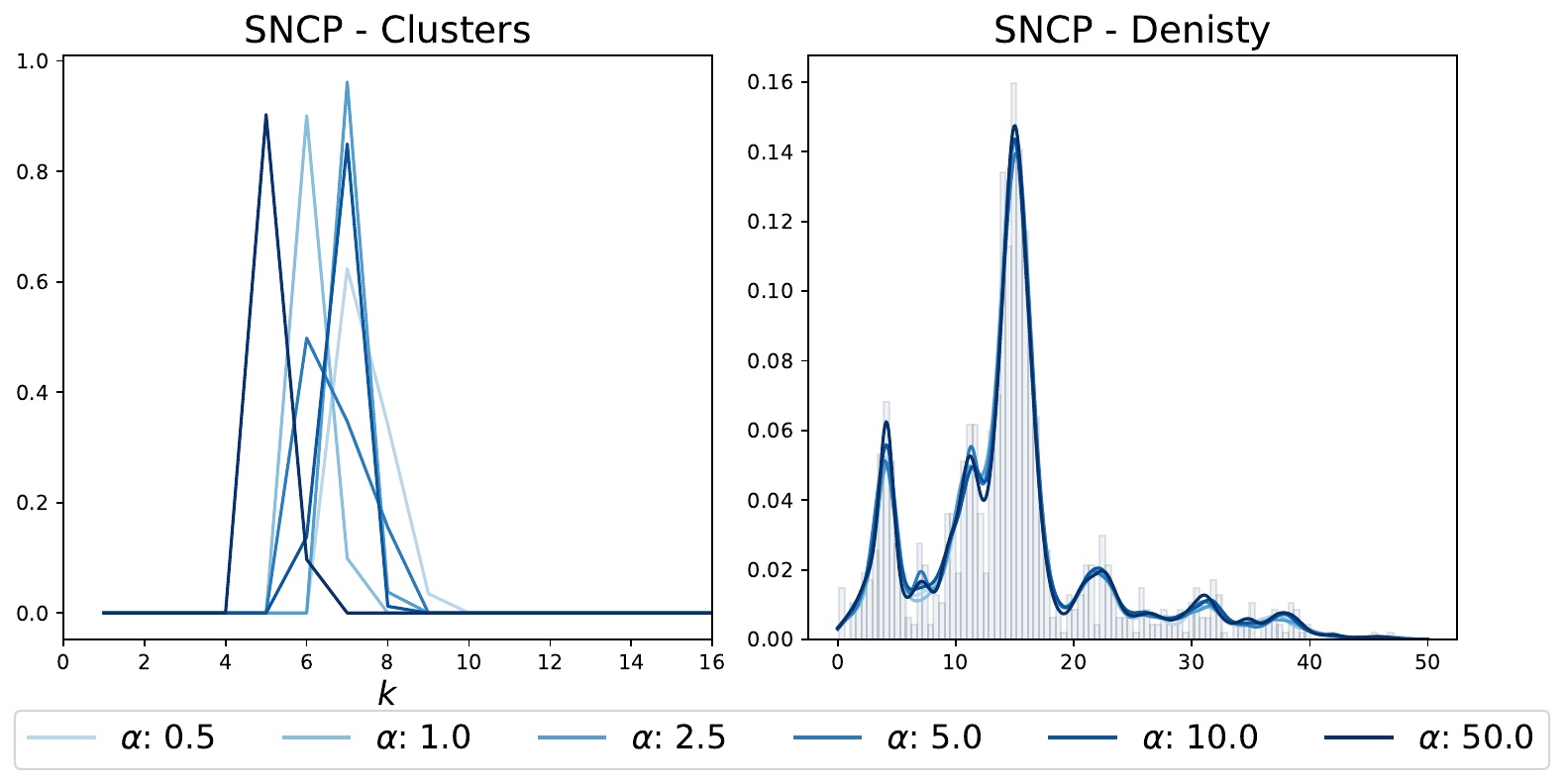}
    \caption{Posterior distribution of the number of clusters and density estimates as $\alpha$ varies.}
    \label{fig:galaxy_alpha}
\end{figure}

\end{document}